\documentclass[reqno]{amsart}
 
\usepackage{amsmath}
\usepackage{amsthm,amssymb,color,comment}
\usepackage{bbm}
\usepackage{mathrsfs}
\usepackage{hyperref}
\usepackage[TS1,T1]{fontenc}
\usepackage[utf8]{inputenc}
\usepackage{dsfont}
\usepackage{tikz}
\usepackage{enumitem}
\usepackage{subfigure}
\usepackage{mathtools}
\usepackage{bbold}
\usepackage{esint}
\usepackage[sort]{cite}

\numberwithin{equation}{section}

\newtheorem{thm}{Theorem}[section]
\newtheorem{proposition}[thm]{Proposition}
\newtheorem{lem}[thm]{Lemma}

\newtheorem{Def}[thm]{Definition}

\theoremstyle{definition}

\newtheorem{rem}[thm]{Remark}

\DeclareMathOperator{\Id}{Id}

\DeclareMathOperator{\DIV}{div}

\DeclareMathOperator*{\argmin}{argmin}
\DeclareMathOperator{\Dom}{Dom}

\newcommand{\bS}{\mathbf{S}}
\newcommand{\W}{\mathcal{W}}
\newcommand{\R}{\mathbb{R}}
\newcommand{\Td}{\mathbb{T}^{d}}
\newcommand{\Rd}{\mathbb{R}^{d}}
\newcommand{\Pro}{\mathscr{P}(\Td)}
\newcommand{\N}{\mathbb{N}}
\newcommand{\p}{\partial}
\newcommand {\f}{\frac}
\newcommand{\eps}{\varepsilon}

\newcommand{\F}{\mathscr{F}}

\newcommand{\supp}{\text{supp}}
\newcommand{\diff}{\mathop{}\!\mathrm{d}}
\newcommand{\ie}{\emph{i.e.}\;}

\newcommand{\ueps}{u_{\varepsilon}}

\newcommand{\weak}{\rightharpoonup}
\DeclarePairedDelimiter{\norm}{\lVert}{\rVert}

\makeatletter
\newcommand{\doublewidetilde}[1]{{%
  \mathpalette\double@widetilde{#1}%
}}
\newcommand{\double@widetilde}[2]{%
  \sbox\z@{$\m@th#1\widetilde{#2}$}%
  \ht\z@=.9\ht\z@
  \widetilde{\box\z@}%
}
\makeatother
\textwidth 6 in
\evensidemargin 0.2 in 
\oddsidemargin 0.2 in

\parskip 3pt
\parindent 0pt

\author{José A. Carrillo}
\address{{\it José A. Carrillo:} Mathematical Institute, University of Oxford, Woodstock Road, Oxford, OX2 6GG, United Kingdom}
\email{carrillo@maths.ox.ac.uk}

\author{Charles Elbar}
\address{{\it Charles Elbar:} Sorbonne Universit\'{e}, Laboratoire Jacques-Louis Lions (LJLL), F-75005 Paris, France}
\email{charles.elbar@sorbonne-universite.fr}

\author{Jakub Skrzeczkowski}
\address{{\it Jakub Skrzeczkowski: } Institute of Mathematics of Polish Academy of Sciences; Faculty of Mathematics, Informatics and Mechanics, University of Warsaw, Poland}
\email{jakub.skrzeczkowski@student.uw.edu.pl}

\begin{document}

\title[]{Degenerate Cahn-Hilliard systems: From nonlocal to local}

\begin{abstract}
We provide a rigorous mathematical framework to establish the limit of a nonlocal model of cell-cell adhesion system to a local model. When the parameter of the nonlocality goes to 0, the system tends to a Cahn-Hilliard system with degenerate mobility and cross interaction forces. Our analysis relies on a priori estimates and compactness properties. 
\end{abstract}

\keywords{Degenerate Cahn-Hilliard equation; Nonlocal Cahn-Hilliard
equation; Aggregation-Diffusion; Singular limit}

\subjclass{35B40, 35D30, 35K25, 35K55}

\maketitle
\setcounter{tocdepth}{1}

\section{Introduction}

\noindent We consider the nonlocal system of Cahn-Hilliard equation with degenerate mobility derived in~\cite{falco2022local}
\begin{align}
     \frac{\partial \rho}{\partial t}& = \nabla\cdot\left(\rho\nabla \left(\kappa B[\rho] + \alpha B[\eta] - \gamma\rho - \beta\eta \right)\right), &&\text{in}\quad (0,+\infty)\times \Td, \label{eq:falco_eq1}\\
     \frac{\partial \eta}{\partial t}& = \nabla\cdot\left(\eta\nabla \left(\alpha B[\rho] + B[\eta] - \beta\rho - \eta \right)\right),&&\text{in}\quad (0,+\infty)\times \Td. \label{eq:falco_eq2}
\end{align}

equipped with an initial datum $(\rho_{0},\eta_{0})\in\Pro^{2}$ where $\Pro$ is the space of probability measure on the flat torus of dimension $d$. Here $\kappa, \alpha, \gamma > 0$, $\beta \in \R$ while $B$ is the nonlocal operator $B=B_{\varepsilon}$ defined with
\begin{equation}\label{operatorB}
B_\eps[u_{\eps}](x) = \f{1}{\eps^{2}}(u_{\eps}(x)-\omega_{\eps}\ast u_{\eps}(x))=\f{1}{\eps^{2}}\int_{\Td}\omega_{\eps}(y)(u_{\eps}(x)-u_{\eps}(x-y)) \diff y
\end{equation}
for $\eps$ small enough and $\omega_{\eps}$ is the usual radial mollification kernel $\omega_{\eps}(x)=\frac{1}{\eps^{d}}\omega(\frac{x}{\eps})$ with $\omega$ compactly supported in the unit ball satisfying
\begin{equation}
\int_{\Rd}\omega(y) \diff y =1, \quad \int_{\Rd} y\, \omega(y) \diff y =0,  \quad \int_{\Rd}   y_i y_j \omega \diff y = \delta_{i,j}\f{2}{d}.
\label{as:omega}
\end{equation}

 Our target is to prove that as $\varepsilon \to 0$, the constructed solutions of 
\begin{align}
     \frac{\partial \rho}{\partial t}& = \nabla\cdot\left(\rho\nabla \left(\kappa B_\eps[\rho] + \alpha B_\eps[\eta] - \gamma\rho - \beta\eta \right)\right), &&\text{in}\quad (0,+\infty)\times \Td,\label{eq:CHNL1}\\
     \frac{\partial \eta}{\partial t}& = \nabla\cdot\left(\eta\nabla \left(\alpha B_\eps[\rho] + B_\eps[\eta] - \beta\rho - \eta \right)\right),&&\text{in}\quad (0,+\infty)\times \Td\label{eq:CHNL2}
\end{align}
tend to the weak solution of the local system of degenerate Cahn-Hilliard equations
\begin{align}
     \frac{\partial \rho}{\partial t}& = \nabla\cdot\left(\rho\nabla \left(-\kappa\Delta \rho -\alpha\Delta \eta - \gamma\rho - \beta\eta \right)\right),\label{eq:CHL1}\\
     \frac{\partial \eta}{\partial t}& = \nabla\cdot\left(\eta\nabla \left(-\alpha\Delta \rho - \Delta \eta - \beta\rho - \eta \right)\right)\label{eq:CHL2}.
\end{align}
\noindent Here, $\kappa>0$ and $\gamma> 0$ represent the relative self-adhesion strength of $\rho$ with respect to $\eta$; while $\alpha> 0$ and $\beta\in\mathbb{R}$ give the relative strength of the cross-attraction forces. We also denote by
\begin{align*}
\mu_{\rho,\eps}&=\kappa B_\eps[\rho] + \alpha B_\eps[\eta] - \gamma\rho - \beta\eta\,,\\
\mu_{\eta,\eps}&=\alpha B_\eps[\rho] + B_\eps[\eta] - \beta\rho - \eta\,,
\end{align*}
the chemical potentials (from the Cahn-Hilliard terminology) related to the nonlocal system. 
The nonlocal system is associated with the following formal energy/entropy structure
\begin{equation}\label{eq:intro_energy}
\begin{split}
 E_{\varepsilon}&[\rho,\eta]:=\frac{1}{4}\int_{\Td}\int_{\Td}\f{\omega_{\eps}(y)}{\eps^{2}} (\kappa|\rho(x)-\rho(x-y)|^{2}+|\eta(x)-\eta(x-y)|^{2})\diff x\diff y  \\ &+\f{\alpha}{2}\int_{\Td}\int_{\Td}\frac{\omega_{\eps}(y)}{\eps^{2}}(\rho(x)-\rho(x-y))(\eta(x)-\eta(x-y))\diff x\diff y -\int_{\Td}\f{\gamma}{2}\rho^{2}+\f{1}{2}\eta^{2}+\beta\rho\eta\diff x,
 \end{split}
 \end{equation}
 \begin{equation}
\Phi[\rho,\eta]:=\int_{\Td}\rho(\log(\rho)-1)+\eta(\log(\eta)-1) \diff x \label{eq:intro_entropy}.
 \end{equation}
 
 Their dissipation is formally controlled by the identities
\begin{equation}\label{eq:energy_diss}
E_{\varepsilon}[\rho,\eta](t) + \int_{0}^{t}\int_{\Td} \rho \, |\nabla \mu_{\rho,\eps}|^2 + \int_{0}^{t}\int_{\Td} \eta \, |\nabla \mu_{\eta,\eps}|^2 \le E_{\varepsilon}[\rho_{0},\eta_{0}],
\end{equation}
\begin{equation}\label{eq:entropy_diss}
\begin{split}
\Phi[\rho,\eta](t) + \mathcal{D}\Phi[\rho,\eta](t) \le \Phi[\rho_{0},\eta_{0}].
\end{split}
\end{equation}
where $\mathcal{D} \Phi[\rho,\eta](t)$ is the dissipation of the entropy defined as 
\begin{align*}
\mathcal{D} \Phi[\rho,\eta](t) =&
\f{1}{2}\int_0^t\int_{\Td}\int_{\Td}\frac{\omega_{\eps}(y)}{\eps^{2}}(\kappa|\nabla\rho(x)-\nabla\rho(x-y)|^{2}+|\nabla\eta(x)-\nabla\eta(x-y)|^{2})\diff x\diff y \diff s \\
&+\alpha \int_0^t\int_{\Td}\int_{\Td}\f{\omega_{\eps}(y)}{\eps^{2}}(\nabla\rho(x)-\nabla\rho(x-y))\cdot(\nabla\eta(x)-\nabla\eta(x-y))\diff x\diff y \diff s\\
&-\int_0^t\int_{\Td}\gamma|\nabla\rho|^{2}+|\nabla\eta|^{2}+2\beta\nabla\rho\cdot\nabla\eta \diff x \diff s\,.
\end{align*}

It turns out (see Proposition \ref{est:positive_energy_entropy}) that for $\kappa > \alpha^2$, $\kappa > 0$ they provide strong compactness in space of $\rho_{\eps}$, $\eta_{\eps}$, $\nabla \rho_{\eps}$, $\nabla \eta_{\eps}$.\\

Our first result states that we can construct solutions to~\eqref{eq:CHNL1}-\eqref{eq:CHNL2} satisfying additional uniform estimates which will be relevant in the sequel.
\begin{thm}[Existence of solutions for the nonlocal system]\label{thm:weaksoldelta}
Suppose that $\kappa > 0$, $\kappa > \alpha^2$. Let $\varepsilon_0 = \min(\varepsilon_0^A, \varepsilon_0^B)$ be given by Proposition~\ref{est:positive_energy_entropy} and Lemma \ref{lem:convexity}. Let $u_{0}=(\rho_{0},\eta_{0})\in\Pro^{2}$ be an initial datum with finite energy and entropy $E_{\eps}(u^0), \Phi(u^0)\le C$ defined in ~\eqref{eq:intro_energy}-\eqref{eq:intro_entropy} where $C$ is independent of $\eps$.  Let $\varepsilon \le \varepsilon_0$.  Then, there exists a global weak solution $\ueps \in\Pro^{2}$ of~\eqref{eq:CHNL1}-\eqref{eq:CHNL2} as defined in Definition~\ref{def:weak_sol_2}. Moreover, it satisfies
\begin{equation}\label{eq:thm_regularity_compactness_rho_eta}
\int_{\Td}\int_{\Td}\f{\omega_{\eps}(y)}{\eps^{2}}(|\rho_\eps(x)-\rho_\eps(x-y)|^{2}+|\eta_\eps(x)-\eta_\eps(x-y)|^{2})\diff x\diff y\le C,
\end{equation}
\begin{equation}\label{eq:thm_regularity_compactness_nabla_rho_eta}
\int_0^T \int_{\Td}\int_{\Td}\f{\omega_{\eps}(y)}{\eps^{2}} (|\nabla\rho_\eps(x)-\nabla\rho_\eps(x-y)|^{2}+|\nabla\eta_\eps(x)-\nabla\eta_\eps(x-y)|^{2})\diff x\diff y \diff t \le C,
\end{equation}
for a constant $C$ that depends on parameters and the initial condition $u^0$ but not on $\eps$.
\end{thm}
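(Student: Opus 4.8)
The plan is to regularize the nonlocal system, construct solutions at the regularized level by a fixed-point or Galerkin argument, derive the energy and entropy estimates \eqref{eq:energy_diss}--\eqref{eq:entropy_diss} uniformly in the regularization parameter, and then pass to the limit using the compactness afforded by Proposition~\ref{est:positive_energy_entropy}. First I would add a nondegenerate viscosity-type perturbation: replace the degenerate mobilities $\rho,\eta$ by $\rho+\delta,\eta+\delta$ (or truncate them at a level and add $\delta\Delta$ terms), which turns \eqref{eq:CHNL1}--\eqref{eq:CHNL2} into a system of the form $\partial_t u = \nabla\cdot(m_\delta(u)\nabla(L_\eps u + \text{nonlinear lower order}))$ where $L_\eps$ is a bounded, smoothing operator because $B_\eps$ is convolution against a smooth kernel divided by $\eps^2$. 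For fixed $\eps$ and $\delta$ this is a quasilinear parabolic system with bounded, smooth nonlinearities (the only genuinely nonlinear terms are the quadratic $\rho^2,\eta^2,\rho\eta$ and the transport structure), so existence of a solution on a short time interval follows from a Banach fixed point in a suitable parabolic space, and then the a priori bounds extend it globally.

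The core of the argument is the uniform estimates. I would first establish nonnegativity and conservation of mass, $\int_\Td \rho_\eps = \int_\Td \rho_0$, $\int_\Td \eta_\eps = \int_\Td \eta_0$, by testing with suitable truncations and using that $\omega_\eps\ge 0$ (standard comparison / Stampacchia for the degenerate-parabolic form, or passing through the $\delta$-regularization where nonnegativity is cleaner). Then I would reproduce the formal energy identity \eqref{eq:energy_diss}: test \eqref{eq:CHNL1} with $\mu_{\rho,\eps}$ and \eqref{eq:CHNL2} with $\mu_{\eta,\eps}$, add them, and recognize the left-hand side as $\frac{d}{dt}E_\eps[\rho_\eps,\eta_\eps]$ — this uses the symmetry of the bilinear form $\int\int \frac{\omega_\eps(y)}{\eps^2}(u(x)-u(x-y))(v(x)-v(x-y))\,dx\,dy$ and the fact that $B_\eps$ is its associated operator. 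Similarly, testing with $\log\rho_\eps$ and $\log\eta_\eps$ (again most safely at the $\delta$-level, with $\log(\rho_\eps+\delta)$) yields the entropy identity \eqref{eq:entropy_diss}; here the key computation is that $\int_\Td \rho_\eps\nabla B_\eps[\rho_\eps]\cdot\nabla\log\rho_\eps = \int_\Td \nabla\rho_\eps\cdot\nabla B_\eps[\rho_\eps]$ and that $\nabla$ commutes with $B_\eps$, which converts the entropy dissipation into the quadratic form in $\nabla\rho_\eps,\nabla\eta_\eps$ displayed in $\mathcal D\Phi$. At this point Proposition~\ref{est:positive_energy_entropy} is exactly the statement that, under $\kappa>\alpha^2$ and $\kappa>0$, for $\eps\le\eps_0$ the dissipations on the left of \eqref{eq:energy_diss} and \eqref{eq:entropy_diss} dominate the (non-sign-definite) quadratic cross terms, so that the Gronwall-type absorption closes and yields precisely the bounds \eqref{eq:thm_regularity_compactness_rho_eta} and \eqref{eq:thm_regularity_compactness_nabla_rho_eta}, together with $\int_0^T\int \rho_\eps|\nabla\mu_{\rho,\eps}|^2 + \eta_\eps|\nabla\mu_{\eta,\eps}|^2\le C$.

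Finally I would send $\delta\to 0$. From the $\delta$-uniform bounds just described one has weak-$*$ compactness of $\rho_\eps^\delta,\eta_\eps^\delta$ in the relevant Lebesgue/Sobolev spaces and, via the nonlocal-to-local compactness embedded in \eqref{eq:thm_regularity_compactness_rho_eta}--\eqref{eq:thm_regularity_compactness_nabla_rho_eta} (Bourgain--Brezis--Mironescu / Aubin--Lions-type arguments) together with the entropy bound controlling $\partial_t\rho_\eps^\delta$ in a negative Sobolev space, strong compactness in $L^p((0,T)\times\Td)$ of $\rho_\eps^\delta,\eta_\eps^\delta$ and their gradients. This strong convergence handles the quadratic terms and the products $m_\delta(u)\nabla\mu$ (the flux is controlled in $L^1$ or better via Cauchy--Schwarz between $\sqrt{m_\delta}$ in $L^\infty\cap L^2$ and $\sqrt{m_\delta}\nabla\mu$ in $L^2$), so one can pass to the limit in the weak formulation of Definition~\ref{def:weak_sol_2} and obtain a weak solution $u_\eps$ satisfying all the stated estimates, which are stable under the weak limit by lower semicontinuity. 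The main obstacle is the passage to the limit in the degenerate flux term: because the mobility vanishes where $\rho_\eps$ does, one cannot expect $\nabla\mu_{\rho,\eps}$ itself to be controlled, only $\sqrt{\rho_\eps}\,\nabla\mu_{\rho,\eps}$, so identifying the limit of $\rho_\eps^\delta\nabla\mu_{\rho,\eps}^\delta$ requires the now-standard but delicate Cahn--Hilliard argument of writing the flux as $\sqrt{m_\delta(u)}\cdot(\sqrt{m_\delta(u)}\nabla\mu)$ and combining strong convergence of the first factor with weak convergence of the second, taking care on the set $\{\rho=0\}$.
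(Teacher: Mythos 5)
Your route is viable but it is genuinely different from the paper's. You regularize the degenerate mobility (as in \eqref{eq:CHNL1_delta}--\eqref{eq:CHNL2_delta}), solve the nondegenerate system by fixed point, derive the energy/entropy inequalities \eqref{eq:energy_diss}--\eqref{eq:entropy_diss} uniformly in $\delta$, and pass $\delta\to 0$; this is the classical Elliott--Garcke strategy, which the paper explicitly mentions and deliberately avoids. The paper instead constructs solutions by the JKO minimizing-movement scheme \eqref{eq:JKO_scheme} for the energy in the Wasserstein space on $\Td$: the discrete energy inequality gives the $L^\infty_t L^2_x$ bound and the $1/2$-H\"older estimate \eqref{Holder1/2}, while the $H^1$ bound corresponding to the entropy dissipation is obtained not by testing with $\log\rho$ but via the flow interchange lemma (Lemma~\ref{lem.flowinterchange} combined with Lemma~\ref{lem:heatflow}), i.e.\ by dissipating $E_\eps$ along the heat flow generated by $\mathcal U$; the weak formulation then comes from first variations of the scheme with perturbations $T_\delta=\Id+\delta\xi$. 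What each approach buys: the gradient-flow construction keeps $\rho,\eta$ probability measures and continuous in time into $(\Pro,W_2)$ for free, and it never needs well-posedness or Schauder theory for the regularized parabolic system (the technical part you compress into ``Banach fixed point in a suitable parabolic space''); your approach, if carried out, would yield the energy and entropy inequalities \eqref{eq:energy_diss}--\eqref{eq:entropy_diss} for the constructed solutions (the paper loses the energy dissipation through the JKO limit, see Remark~4.3), at the price of the parabolic existence theory and of the $\delta\to 0$ identification. In both schemes the uniform-in-$\eps$ constants in \eqref{eq:thm_regularity_compactness_rho_eta}--\eqref{eq:thm_regularity_compactness_nabla_rho_eta} come from exactly the same source, Proposition~\ref{est:positive_energy_entropy} together with Lemma~\ref{lem:poincare_with_parameter} and the $\eps$-independent bounds on $E_\eps(u^0)$, $\Phi(u^0)$.

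Two concrete points in your sketch need repair. First, nonnegativity: once the mobility is truncated below by $\delta$, there is no comparison/Stampacchia principle for this cross-diffusion system with nonlocal drift, so the regularized solutions need not be nonnegative; the standard fix is to replace $\log$ by the regularized entropy $\Phi_\delta$ with $\Phi_\delta''=1/T_\delta$, so that the uniform bound on $\Phi_\delta[\rho^\delta_\eps,\eta^\delta_\eps]$ forces $\|(\rho^\delta_\eps)_-\|_{L^2}^2\le C\delta\to 0$ and nonnegativity (hence membership in $\Pro$, together with mass conservation) is recovered only in the limit $\delta\to 0$, not before. Second, the difficulty you flag at the end about identifying the limit of $\rho^\delta_\eps\nabla\mu^\delta_{\rho,\eps}$ via $\sqrt{m_\delta}\,(\sqrt{m_\delta}\nabla\mu)$ does not arise for this theorem: at fixed $\eps$ the potential contains only $B_\eps$ and zero-order terms, and since $\nabla B_\eps[\rho]=B_\eps[\nabla\rho]$ the flux in Definition~\ref{def:weak_sol_2} is a sum of products of $\rho_\eps,\eta_\eps$ with $\nabla\rho_\eps,\nabla\eta_\eps$ (or their convolutions), so strong $L^2$ compactness of the densities plus weak $L^2$ compactness of their gradients suffices to pass to the limit; the delicate flux identification you describe belongs to the local ($\eps\to 0$) limit, which is Theorem~\ref{thm:final}, not to the existence statement here. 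You should also verify the requirement $\rho,\eta\in C([0,T];\Pro)$ of Definition~\ref{def:weak_sol_2}, which in your setting follows from an $L^2_tL^1_x$ bound on the flux rather than being automatic as in the JKO construction.
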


\noindent The proof of this result follows the argument of \cite{MR3761096}. We decided to include the proof to demonstrate estimates \eqref{eq:thm_regularity_compactness_rho_eta}--\eqref{eq:thm_regularity_compactness_nabla_rho_eta} which are essential for our main result which reads as follows.

\begin{thm}[Convergence of nonlocal to local Cahn-Hilliard equation on the torus]\label{thm:final}
Suppose that $\kappa > 0$, $\kappa > \alpha^2$. Let $u_{0}\ge 0$ be an initial datum with finite energy and entropy $E_{\eps}(u^0), \Phi(u^0)\le C$ defined in ~\eqref{eq:intro_energy} and \eqref{eq:intro_entropy} where $C$ is independent of $\eps$. Let $\{u_{\varepsilon}\}$ be a sequence of solutions of the degenerate nonlocal Cahn-Hilliard equation~\eqref{eq:CHNL1}-\eqref{eq:CHNL2} as defined in Definition \ref{def:weak_sol_2}. Then, up to a subsequence (not relabelled),
$$
u_{\varepsilon} \to u \mbox{ in } L^2(0,T; H^1(\Td))\,,
$$
where $u$ is a weak solution of the degenerate Cahn-Hilliard system~\eqref{eq:CHL1}-\eqref{eq:CHL2} as in Definition~\ref{def:weak_sol_local}.   
\end{thm}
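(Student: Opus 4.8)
The plan is to pass to the limit $\eps \to 0$ in the weak formulation of \eqref{eq:CHNL1}--\eqref{eq:CHNL2}, using the uniform bounds \eqref{eq:thm_regularity_compactness_rho_eta}--\eqref{eq:thm_regularity_compactness_nabla_rho_eta} from Theorem~\ref{thm:weaksoldelta} together with the energy/entropy dissipation inequalities \eqref{eq:energy_diss}--\eqref{eq:entropy_diss}. First I would collect the compactness: the entropy bound controls $\rho_\eps, \eta_\eps$ in $L\log L$ and, combined with \eqref{eq:thm_regularity_compactness_nabla_rho_eta}, yields $\nabla\rho_\eps, \nabla\eta_\eps$ bounded in $L^2(0,T;L^2(\Td))$ uniformly in $\eps$ (this is where $\kappa > \alpha^2$ enters, via Proposition~\ref{est:positive_energy_entropy}, to absorb the sign-indefinite cross terms in $\mathcal D\Phi$ and in $E_\eps$). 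The nonlocal bounds \eqref{eq:thm_regularity_compactness_rho_eta}--\eqref{eq:thm_regularity_compactness_nabla_rho_eta} are of Bourgain--Brezis--Mironescu type: by the standard BBM/Ponce compactness criterion they give \emph{strong} precompactness of $\rho_\eps, \eta_\eps$ in $L^2(0,T;H^1(\Td))$ (for fixed time-slice compactness) and identify any weak $H^2$-limit, so along a subsequence $\rho_\eps \to \rho$, $\eta_\eps \to \eta$ strongly in $L^2(0,T;H^1(\Td))$ with $\rho,\eta \in L^2(0,T;H^2(\Td))$. Time-compactness (needed to upgrade to strong convergence including the time variable, and to pass to the limit in $\partial_t$) comes from an Aubin--Lions--Simon argument: the equations give $\partial_t \rho_\eps, \partial_t\eta_\eps$ bounded in $L^1(0,T;(W^{k,\infty})')$ for $k$ large, since the fluxes $\rho_\eps \nabla\mu_{\rho,\eps}$ are bounded in $L^1$ by the energy dissipation (Cauchy--Schwarz: $\int \rho_\eps|\nabla\mu_{\rho,\eps}| \le (\int\rho_\eps)^{1/2}(\int \rho_\eps|\nabla\mu_{\rho,\eps}|^2)^{1/2}$).

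Next I would identify the limits of the nonlocal operators. The key consistency fact is that $B_\eps[\rho_\eps] \to -\Delta\rho$ in a suitable weak sense: writing $B_\eps[u] = -\tfrac{1}{d}\,\omega_\eps *_{\text{2nd order}} \cdots$, the normalization \eqref{as:omega} (unit mass, zero first moment, second moment $\tfrac{2}{d}\delta_{ij}$) is exactly calibrated so that $B_\eps \to -\Delta$ as Fourier multipliers; combined with the strong $H^1$-convergence of $\rho_\eps$ and the uniform $L^2_t H^2_x$ bound, one gets $B_\eps[\rho_\eps] \rightharpoonup -\Delta\rho$ weakly in $L^2(0,T;L^2(\Td))$, and likewise for $\eta_\eps$. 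This handles the terms $\kappa B_\eps[\rho_\eps] + \alpha B_\eps[\eta_\eps]$ inside $\mu_{\rho,\eps}$; the local terms $-\gamma\rho_\eps - \beta\eta_\eps$ converge strongly in $L^2_{t,x}$ by the $H^1$-strong convergence. Then in the flux $\rho_\eps \nabla\mu_{\rho,\eps}$, I would test against a smooth vector field and integrate by parts to move the gradient off $\mu_{\rho,\eps}$; but more robustly, since $\nabla\mu_{\rho,\eps}$ is only $L^2(\rho_\eps \,dx\,dt)$, I would pass to the limit in the product $\rho_\eps\nabla\mu_{\rho,\eps}$ by writing $\sqrt{\rho_\eps}\cdot(\sqrt{\rho_\eps}\nabla\mu_{\rho,\eps})$: $\sqrt{\rho_\eps}$ converges strongly in $L^2$ (from strong $L^1$, or better $L^2$, convergence of $\rho_\eps$ plus uniform integrability), while $\sqrt{\rho_\eps}\nabla\mu_{\rho,\eps}$ is bounded in $L^2_{t,x}$ hence converges weakly; the product then converges to $\sqrt{\rho}\cdot J$ for some $L^2$ limit $J$, and a separate argument (testing, using the equation and the identified weak limit of $B_\eps$) shows $\sqrt\rho\, J = \rho\nabla(-\kappa\Delta\rho - \alpha\Delta\eta - \gamma\rho - \beta\eta)$ in the sense required by Definition~\ref{def:weak_sol_local} — typically this is phrased as: the flux equals $\rho \nabla\mu_\rho$ where $\sqrt\rho\nabla\mu_\rho$ is the weak $L^2$ limit of $\sqrt{\rho_\eps}\nabla\mu_{\rho,\eps}$, and $\mu_\rho$ is characterized distributionally via $\nabla(\rho\mu_\rho)$, $\mu_\rho\nabla\rho$, etc., à la the degenerate Cahn--Hilliard framework of \cite{MR3761096}.

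The main obstacle I anticipate is precisely this last point: identifying the limiting flux, because the mobility $\rho$ degenerates where $\rho = 0$, so $\nabla\mu_{\rho,\eps}$ need not be controlled in any $L^p$ space independent of the density — only the weighted quantity $\sqrt{\rho_\eps}\nabla\mu_{\rho,\eps}$ is. Following \cite{MR3761096}, the resolution is to never work with $\mu_{\rho,\eps}$ directly but to pass to the limit in the well-defined products: one shows $\rho_\eps \nabla B_\eps[\rho_\eps] = \nabla(\rho_\eps B_\eps[\rho_\eps]) - B_\eps[\rho_\eps]\nabla\rho_\eps$, and each term on the right converges (the first as a distribution using $\rho_\eps B_\eps[\rho_\eps] \rightharpoonup -\rho\Delta\rho$ in $L^1$, which needs the strong $L^2_tH^1_x$ convergence of $\rho_\eps$ against the weak $L^2$ convergence of $B_\eps[\rho_\eps]$; the second similarly). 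A delicate sub-issue is justifying $\rho_\eps B_\eps[\rho_\eps] \to -\rho\Delta\rho$ and the analogous cross-term $\rho_\eps B_\eps[\eta_\eps] \to -\rho\Delta\eta$: here the product of a strongly convergent $L^2_tH^1_x$ sequence with a weakly convergent one is not automatically convergent, so I would instead use the integration-by-parts identity on $\int\int \tfrac{\omega_\eps(y)}{\eps^2}(\rho_\eps(x)-\rho_\eps(x-y))(\cdots)$, exploit the strong convergence of $\nabla\rho_\eps$ in $L^2_{t,x}$ to pass the difference quotients to genuine derivatives (this is the BBM limit at the level of the bilinear forms appearing in $E_\eps$ and $\mathcal D\Phi$), and lower-semicontinuity to control the quadratic dissipation terms. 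Once all products are identified, inserting them into the weak formulation and using the time-compactness to pass to the limit in $\langle\partial_t\rho_\eps,\varphi\rangle$ yields that $(\rho,\eta)$ is a weak solution in the sense of Definition~\ref{def:weak_sol_local}, and the strong $L^2(0,T;H^1(\Td))$ convergence is exactly what the compactness step produced.
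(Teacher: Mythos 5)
Your compactness step is essentially the paper's: the bounds \eqref{eq:thm_regularity_compactness_rho_eta}--\eqref{eq:thm_regularity_compactness_nabla_rho_eta} plus a Ponce/BBM-type criterion and an Aubin--Lions argument give strong convergence in $L^2(0,T;H^1(\Td))$ with limits in $L^2(0,T;H^2(\Td))$. The genuine gap is in what you use afterwards: your time-derivative bound and, more importantly, your entire flux-identification device hinge on the energy dissipation inequality \eqref{eq:energy_diss}, i.e.\ on $\sqrt{\rho_\eps}\,\nabla\mu_{\rho,\eps}$ being bounded in $L^2_{t,x}$. That estimate is \emph{not} available here: it is not part of Definition \ref{def:weak_sol_2}, and the solutions constructed in Theorem \ref{thm:weaksoldelta} are only known to satisfy \eqref{eq:thm_regularity_compactness_rho_eta}--\eqref{eq:thm_regularity_compactness_nabla_rho_eta}; the paper explicitly remarks (after Lemma \ref{lem:estimates_uniform_eps}) that the dissipation information of \eqref{eq:energy_diss} is lost in the JKO construction. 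So the decomposition $\rho_\eps\nabla\mu_{\rho,\eps}=\sqrt{\rho_\eps}\cdot(\sqrt{\rho_\eps}\nabla\mu_{\rho,\eps})$ and the ``weak limit $J$'' have no uniform bound to stand on, and the subsequent identification $\sqrt{\rho}\,J=\rho\nabla(-\kappa\Delta\rho-\alpha\Delta\eta-\cdots)$ is also problematic as stated, since the limit is only in $L^2(0,T;H^2(\Td))$ and third derivatives do not exist as functions; the limiting notion of solution (Definition \ref{def:weak_sol_local}) is deliberately written in an integrated-by-parts form that never mentions $\mu$.

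The paper's proof avoids $\nabla\mu_{\rho,\eps}$ altogether: the time derivative is bounded in $L^2(0,T;H^{-2-\frac d2}(\Td))$ directly from the weak formulation using the duality $\langle B_\eps[u],\varphi\rangle=\langle S_\eps[u],S_\eps[\varphi]\rangle$ and the entropy-type bound \eqref{eq:thm_regularity_compactness_nabla_rho_eta}, and then one passes to the limit term by term in Definition \ref{def:weak_sol_2} after one integration by parts (the split $I_1+I_2+I_3+I_4$), handling the cross term with the product rule for $S_\eps$, the difference-quotient Lemmas \ref{lem:diff_quot_strong_conv}--\ref{lem:weak_conv_diff_quot}, and the second-moment normalization \eqref{as:omega}, which is exactly how the terms $D^2\eta:(\nabla\rho\otimes\nabla\varphi+\rho D^2\varphi)$ of Definition \ref{def:weak_sol_local} appear. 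Your fallback sketch (write $\rho_\eps\nabla B_\eps[\rho_\eps]=\nabla(\rho_\eps B_\eps[\rho_\eps])-B_\eps[\rho_\eps]\nabla\rho_\eps$ and claim $B_\eps[\rho_\eps]\rightharpoonup-\Delta\rho$) is closer in spirit, but as written it presumes a uniform $L^2$ bound on $B_\eps[\rho_\eps]$; convergence of $B_\eps$ as a Fourier multiplier plus weak convergence of $\rho_\eps$ does not give this, and such a bound must itself be extracted from \eqref{eq:thm_regularity_compactness_nabla_rho_eta} by an argument of the type used for $I_1,I_2$ in the scalar case. Making that step precise is exactly the missing content.
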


We conclude with a short discussion of applied techniques and considered problems.

\underline{\textit{Existence of weak solutions.}}
The strategy to prove the existence of weak solutions is based on the gradient flow structure of the equation in the Wasserstein space. We use the JKO scheme first introduced in~\cite{MR1617171} by Jordan, Kinderlehrer and Otto in the context of the Fokker-Planck equation. The main idea is to use an implicit time discretization of the associated variational problems. The sequence created minimizes movements. When the time step goes to 0, the sequence converges to the associated gradient flow. This strategy has been used for instance in~\cite{MR2921215,kroemer-laux} in the context of Cahn-Hilliard equation and in~\cite{MR3761096} in the context of cross-diffusion systems with nonlocal interaction. The proof of Theorem \ref{thm:weaksoldelta} is in fact closely related to the one in~\cite{MR3761096} and we follow to some extent their proof. The difference is the control of the positiveness of the energy and the transport of uniform bounds independently of $\eps$ needed in the last section. Also, our settings are periodic in space.
The proof uses two main components:
\begin{itemize}
    \item The "gradient flow" structure of the scheme which provides classical discrete energy estimates and Hölder continuity in time and allow narrow convergences of the scheme.
    \item The weak convergences being not enough, we use the flow interchange lemma~\cite{MR2581977,MR2921215}. The idea is that we usually obtain better estimates with the entropy in the Cahn-Hilliard equation. This entropy generates a heat-flow. Then the flow interchange lemma allows to exchange the dissipation of one functional along the gradient flow of another one and thus improve the regularity of the scheme. 
\end{itemize}
Nevertheless, since we assume that $\kappa > \alpha^2$ and $\kappa > 0$, the system is strongly parabolic with respect to $(\rho, \eta)$. Therefore, to prove existence, one could approximate \eqref{eq:CHNL1}--\eqref{eq:CHNL2} by
\begin{align}
     \frac{\partial \rho}{\partial t}& = \nabla\cdot\left(T_{\delta}(\rho)\nabla \left(\kappa B_\eps[\rho] + \alpha B_\eps[\eta] - \gamma\rho - \beta\eta \right)\right), &&\text{in}\quad (0,+\infty)\times \Td,\label{eq:CHNL1_delta}\\
     \frac{\partial \eta}{\partial t}& = \nabla\cdot\left(T_{\delta}(\eta)\nabla \left(\alpha B_\eps[\rho] + B_\eps[\eta] - \beta\rho - \eta \right)\right),&&\text{in}\quad (0,+\infty)\times \Td,\label{eq:CHNL2_delta}
\end{align}
where $T_{\delta}$ is a function such that $\delta \leq T_{\delta} \leq \frac{1}{\delta}$ and $T_{\delta}(\varrho) \to \varrho$ as $\delta \to 0$. Then, one sends $\delta \to 0$ and obtains solutions to \eqref{eq:CHNL1}--\eqref{eq:CHNL2}. This method is a standard way of proving existence of solutions to Cahn-Hilliard equation with degenerate mobility \cite{MR3448925,MR1377481,elbar2021degenerate,MR4199231}. Of course, one has to prove estistence to \eqref{eq:CHNL1_delta}--\eqref{eq:CHNL2_delta} by virtue of fixed point method and Schauder's estimates for parabolic equations which is very technical. Therefore, we preferred to apply gradient flow techniques which are natural for our problem.

\underline{\textit{Passage to the limit $\eps\to 0$ and nonlocal compactness results.}}
We use the strategy developed by the second and third author in~\cite{elbar-skrzeczkowski} for the single Cahn-Hilliard equation. The main tool is the compactness result due to Bourgain-Brezis-Mironescu \cite{bourgain2001another} and Ponce \cite{MR2041005} which reads as follows:
\begin{proposition}\label{prop:ponce}
Let $\{f_\eps\}$ be a sequence bounded in $L^2(\Td)$. Suppose that
\begin{equation}\label{eq:compactness_theorem_ponce}
\int_{\Td} \int_{\Td} \frac{|f_\eps(x) - f_\eps(y)|^2}{|x-y|^2} \omega_\eps(|x-y|) \diff x \diff y \leq C
\end{equation}
for some constant $C$. Then, $\{f_\eps\}$ is strongly compact in $L^2(\Td)$ and the limit $f \in W^{1,2}(\Td)$.
\end{proposition}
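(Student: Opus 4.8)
The plan is to pass to Fourier series on $\Td$, which turns the nonlocal quantity in~\eqref{eq:compactness_theorem_ponce} into a weighted $\ell^2$ sum and reduces the whole statement to a two-sided bound on a single Fourier symbol. Writing $f_\eps=\sum_{k\in\mathbb{Z}^d}c^\eps_k\,e^{2\pi i k\cdot x}$, Plancherel gives $\sum_k|c^\eps_k|^2=\|f_\eps\|_{L^2(\Td)}^2\le C$. Since $\omega_\eps$ is supported in the ball of radius $\eps<\tfrac12$, the torus and Euclidean distances coincide on its support, so the change of variables $z=x-y$ is licit and, using $\int_{\Td}|f_\eps(x)-f_\eps(x-z)|^2\diff x=4\sum_k|c^\eps_k|^2\sin^2(\pi k\cdot z)$,
\begin{equation*}
\mathcal{E}_\eps(f_\eps):=\int_{\Td}\int_{\Td}\frac{|f_\eps(x)-f_\eps(y)|^2}{|x-y|^2}\,\omega_\eps(|x-y|)\diff x\diff y=\sum_{k\in\mathbb{Z}^d}|c^\eps_k|^2\,\lambda_\eps(k),\qquad \lambda_\eps(k)=\int_{\Td}\frac{\omega_\eps(|z|)}{|z|^2}\,4\sin^2(\pi k\cdot z)\diff z.
\end{equation*}
Everything now hinges on the behaviour of the symbol $\lambda_\eps(k)$.

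Rescaling $z=\eps w$ yields $\lambda_\eps(k)=\frac{4}{\eps^2}\int_{B_1}\frac{\omega(|w|)}{|w|^2}\sin^2(\pi\eps k\cdot w)\diff w$, where $B_1$ is the unit ball. From $\sin^2\theta\le\theta^2$ and $\int_{\Rd}\omega=1$ one gets the upper bound $\lambda_\eps(k)\le 4\pi^2|k|^2$, while for fixed $k$ the elementary limit $\eps^{-2}\sin^2(\pi\eps k\cdot w)\to\pi^2(k\cdot w)^2$ together with radiality of $\omega$ gives the pointwise convergence $\lambda_\eps(k)\to\tfrac{4\pi^2}{d}|k|^2$. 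The key coercivity estimate I would establish is the uniform lower bound
\begin{equation*}
\lambda_\eps(k)\ge c\,\min\bigl(|k|^2,\eps^{-2}\bigr)\qquad\text{for all }k\in\mathbb{Z}^d,\ \eps\le\eps_0,
\end{equation*}
for some $c>0$ depending only on $\omega$ and $d$. For $|k|\eps$ below a small threshold $\theta_0$ this follows from $\sin^2(\pi\eps k\cdot w)\gtrsim(\eps k\cdot w)^2$ and the nondegeneracy $\int_{B_1}\frac{\omega(|w|)}{|w|^2}(k\cdot w)^2\diff w=\tfrac{|k|^2}{d}$; for $|k|\eps\ge\theta_0$ one argues that, as $w$ ranges over $B_1$, the phase $\pi\eps k\cdot w$ sweeps an interval of length $\gtrsim\theta_0$, so the weighted average of $\sin^2(\pi\eps k\cdot w)$ stays bounded below, giving $\lambda_\eps(k)\gtrsim\eps^{-2}$.

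With the two-sided bound in hand, the conclusion is routine. Splitting the frequencies at $|k|=\theta_0/\eps$ in the identity $\mathcal{E}_\eps(f_\eps)=\sum_k|c^\eps_k|^2\lambda_\eps(k)\le C$ gives, on the one hand, $\sum_{|k|\le\theta_0/\eps}|c^\eps_k|^2|k|^2\le C/c$ (uniform $H^1$-type control of the low modes) and, on the other hand, $\sum_{|k|>\theta_0/\eps}|c^\eps_k|^2\le C\eps^2/c\to0$ (vanishing high-frequency tail). Denoting by $P_N$ the projection onto modes $|k|\le N$, the family $\{P_Nf_\eps\}_\eps$ is bounded in $H^1(\Td)$ for each fixed $N$, hence precompact in $L^2(\Td)$ by Rellich; meanwhile $\|f_\eps-P_Nf_\eps\|_{L^2}^2\le C N^{-2}+C\eps^2$ once $\theta_0/\eps>N$. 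Choosing $N$ large and then $\eps$ small shows $\{f_\eps\}$ is totally bounded, hence strongly precompact, in $L^2(\Td)$. Finally, if $f_\eps\to f$ in $L^2$ then $c^\eps_k\to c_k$ for each $k$, and Fatou applied to $\sum_{|k|\le\theta_0/\eps}|c^\eps_k|^2|k|^2\le C/c$ gives $\sum_k|c_k|^2|k|^2<\infty$, i.e.\ $f\in W^{1,2}(\Td)$.

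The genuine difficulty is concentrated entirely in the uniform lower bound on $\lambda_\eps(k)$ in the high-frequency regime $|k|\eps\gtrsim1$: this is exactly the statement that the concentrating functional remains coercive as $\eps\to0$, equivalently that the translates $\|f_\eps(\cdot+h)-f_\eps\|_{L^2}$ can be controlled uniformly in $\eps$, and it is where the precise structure of $\omega$ (nonnegativity, radiality and the normalisation~\eqref{as:omega}) enters. An alternative, Fourier-free route would replace the truncation step by a mollification argument, bounding $\|f_\eps-f_\eps\ast\varphi_\delta\|_{L^2}^2\le C(\delta)\,\mathcal{E}_\eps(f_\eps)$ with $C(\delta)\to0$ uniformly in $\eps$, then invoking Rellich for the smooth family $f_\eps\ast\varphi_\delta$ and a diagonal argument; on the torus, however, the spectral computation above is the most transparent.
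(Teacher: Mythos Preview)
The paper does not give its own proof of this proposition; it is quoted as a known compactness result due to Bourgain--Br\'ezis--Mironescu and Ponce, and the time-dependent variant in Theorem~\ref{thm:ponce_tx} is likewise cited rather than proved. The approach in those references, and implicitly in the paper via Lemma~\ref{lem:Lions-Aubin_nonlocal}, is the real-variable route you describe at the very end: bound $\|f_\eps-f_\eps\ast\varphi_\delta\|_{L^2}$ uniformly in $\eps$ in terms of the energy~\eqref{eq:compactness_theorem_ponce}, then invoke Fr\'echet--Kolmogorov.

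Your primary argument via Fourier series is a genuinely different route, specific to the torus, and it is essentially correct. The reduction to the symbol $\lambda_\eps(k)$ and the low-frequency estimate are clean. The only soft spot, which you already flag, is the uniform lower bound $\lambda_\eps(k)\gtrsim\eps^{-2}$ in the regime $|k|\eps\ge\theta_0$: ``the phase sweeps an interval, so the average of $\sin^2$ stays bounded below'' is not yet a proof, because the weight $\omega(|w|)/|w|^2$ need not be integrable near $w=0$ and is certainly not constant. A clean fix is to pick radii $0<r_1<r_2<1$ with $\int_{r_1\le|w|\le r_2}\omega>0$ (possible since $\int\omega=1$), restrict the integral to this annulus where $\omega(|w|)/|w|^2$ is a bounded nonnegative weight of positive mass, and use $\sin^2\theta=\tfrac12(1-\cos 2\theta)$ together with Riemann--Lebesgue for the oscillatory part to get a uniform positive lower bound once $|k|\eps$ is large. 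With that patch the argument is complete. The Fourier approach buys you an explicit symbol and a transparent frequency decomposition; the mollification approach is domain-agnostic and is what generalises off the torus.
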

Proposition \ref{prop:ponce} is crucial because the difference quotient appearing in \eqref{eq:compactness_theorem_ponce} can be controlled by the energy and the dissipation of the entropy of Cahn-Hilliard equation which yields compactness. The novelty in this paper comes from the treating of cross-interactions terms. It turns out that we can control possibly negative terms appearing in the energy and the dissipation of entropy by a simple interpolation inequality, see Lemma \ref{lem:poincare_with_parameter} and Proposition \ref{est:positive_energy_entropy}. 

\underline{\textit{Aggregation-diffusion systems.}} System \eqref{eq:CHNL1}-\eqref{eq:CHNL2} is an example of aggregation-diffusion system \cite{MR3932458} which are attracting a lot of mathematical attention nowadays \cite{MR4344434,MR4223480,MR4193178,MR4022083,MR3847183}. To motivate, let us first start with an aggregation equation
\begin{equation}\label{eq:aggregation_equation}
\frac{\partial \rho}{\partial t} + \nabla\cdot (\rho u) = 0, \qquad u = -\nabla W \ast \rho,
\end{equation}
where $W:\Td \to \R$ is a symmetric interaction potential. Over the last years, equation \eqref{eq:aggregation_equation} was applied in the context of biological aggregation \cite{MR1794944,TBL06,MR1698215}, materials science \cite{Holm_Putkaradze}, granular media \cite{MR2209130, MR1471181} and it attracted also a lot of mathematical interest, particularly for non-smooth potentials. We only refer to \cite{MR2743876} for the $L^p$ theory, to \cite{MR2769217} for the theory in spaces of measures and to \cite{MR2480108} for blow-up conditions. Equation \eqref{eq:aggregation_equation} can be derived from the particle system via mean-field limit \cite{MR3558251, MR3317577}: one considers $N$ particles having positions $\{X_i\}_{i = 1,...,N}$ satisfying system of ODEs:
$$
X_i'  = - \frac{1}{N} \sum_{j \neq i} \nabla W(X_i - X_j);
$$
then, under appropriate assumptions on $W$, empirical measure $\rho^N(t) = \frac{1}{N} \sum_{i=1}^N \delta_{X_i(t)}$ converges (in the weak$^*$ topology of measures) to a solution of \eqref{eq:aggregation_equation}, see \cite[Theorem 3.1]{MR3331178} for the proof and \cite{MR3880206} for some extensions.\\

\noindent To obtain the aggregation-diffusion equation from \eqref{eq:aggregation_equation}, one can consider interaction potentials of the form $W_{\nu} = W + 2\nu \delta_0$ modeling two effects: repulsion of strength $\nu$ and non-local attraction. Informally, we obtain
\begin{equation}\label{eq:aggr_diff_m2}
\frac{\partial \rho}{\partial t} = \nu \, \Delta \rho^2 + \nabla \cdot(\rho\, \nabla W \ast \rho),
\end{equation}
but this can be made rigorous by approximating the Dirac mass with a sequence of mollifiers, see \cite{MR3913840,burger2022porous} with $m=2$. Of course, \eqref{eq:aggr_diff_m2} can be extended to include more general diffusion term $\Delta \rho^m$ instead of $\Delta \rho^2$ as well as an advection term, see \cite{carrillo2023nonlocal}.\\

One can also consider system of particles $\{X_i\}_{i=1,...,N}$, $\{Y_i\}_{i=1,...,N}$ representing two populations. Then, repeating the derivation explained above, one arrives at a system of equations of the form \eqref{eq:falco_eq1}--\eqref{eq:falco_eq2} which can be used to model cell-cell adhesion \cite{ArmstrongPainterSherratt,CHS18,MR3948738} to reproduce the Steinberg cell-sorting phenomena. Structure preserving numerical schemes have been derived for equations and systems of aggregation-diffusion type \cite{CCH15,B.C.H2020,bailo2023boundpreserving} as well as Cahn-Hilliard equations and systems \cite{bailo2023unconditional,falco2022local} recovering the cell sorting mechanism.\\

\underline{\textit{Gradient flows in the periodic setting.}} Due to the presence of a non-local operator $B_{\varepsilon}$ in \eqref{eq:CHNL1}--\eqref{eq:CHNL2}, we develop our theory on the $d$-dimensional torus $\Td$ which makes non-local terms easy to be defined. As already explained, solutions to \eqref{eq:CHNL1}--\eqref{eq:CHNL2} will be constructed via JKO scheme. In what follows, we briefly review the theory of optimal transport on $\Td$ comparing to the usual case of $\Rd$ or bounded domain $\Omega \subset \R^d$. We refer the Reader to \cite[Section 1.3.2]{MR3409718}.\\

First, we need to define a metric and the natural choice is 
$$
d(x,y) = \inf_{k \in \mathbb{Z}^d} |x- y +k |, \qquad x, y \in \Td,
$$
where $|\cdot|$ is the Euclidean distance. Then, the Wasserstein distance ${W}_{2}$ is defined as
$$
{W}_{2}^{2}(\mu, \nu) = \inf_{\pi \in \Pi(\mu, \nu)} \int_{\Td \times \Td} d(x,y)^2 \diff \pi(x,y), \qquad \mu, \nu \in \Pro,
$$
where $\Pi(\mu, \nu)$ is the set of couplings between $\mu$ and $\nu$. This already implies that, say in dimension $d=1$, the optimal transport maps are not necessarily monotone on the torus. Nevertheless, the optimal map always exists if at least one of the measures is absolutely continuous with respect to the Lebegue measure. Moreover, the optimal map is given by the gradient of some function which is differentiable a.e. We refer to \cite{MR1711060} for the first proof of this fact and to \cite[Theorem 1.25]{MR3409718} for a modern presentation. We also refer to the general result of McCann \cite{MR1844080} who proved existence of the optimal map on the general manifold, including the case of torus.\\

To conclude, let us mention that optimal transport was used to study several PDEs on the torus via JKO scheme, in particular fractional porous medium equation \cite{MR3912487}, continuity equation with nonlocal velocity in 1D \cite{MR3473438}, systems of continuity equations with nonlinear diffusion and nonlocal drifts \cite{carlier:hal-01147666} and certain fourth-order equation in one dimension \cite{MR2558330}.

\section{Energy and entropy for the non-local system}

As already mentioned, system \eqref{eq:CHNL1}--\eqref{eq:CHNL2} has the energy/entropy structure which can be used to obtain compactness estimates. In order to do so, one has to assure nonnegativity of the quantities of interest. In our case, we focus on the energy $E_{\eps}$ and the dissipation of entropy $\mathcal{D}\Phi$.

\begin{proposition}\label{est:positive_energy_entropy}
Suppose that $\kappa > 0$, $\kappa > \alpha^2$. Then, there exists $\varepsilon_0^A>0$ depending on $\kappa$, $\alpha$, $\beta$, $\gamma$ with the following property: for all $\varepsilon \in (0,\varepsilon_0^A)$ and $\rho, \eta \in \Pro$, up to a constant, the energy defined by~\eqref{eq:intro_energy} and the dissipation of the entropy defined in~\eqref{eq:entropy_diss} are nonnegative and provide the estimates on the quantities
\begin{align*}
&\int_{\Td}\int_{\Td}\f{\omega_{\eps}(y)}{\eps^{2}}(|\rho(x)-\rho(x-y)|^{2}+|\eta(x)-\eta(x-y)|^{2})\diff x\diff y\le C + E_{\varepsilon}[\rho,\eta],\\
&\int_0^t \int_{\Td}\int_{\Td}\f{\omega_{\eps}(y)}{\eps^{2}} (|\nabla\rho(x)-\nabla\rho(x-y)|^{2}+|\nabla\eta(x)-\nabla\eta(x-y)|^{2})\diff x\diff y \diff s \le C + \mathcal{D}\Phi[\rho,\eta](t),
\end{align*}
where $C$ depends on $\kappa$, $\alpha$, $\beta$ and $\gamma$.
\end{proposition}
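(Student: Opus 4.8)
The plan is to split each of the two functionals into a quadratic \emph{nonlocal} part and a lower-order \emph{local} part, to show that the nonlocal part is a nonnegative quadratic form that already dominates (a multiple of) the nonlocal seminorms appearing on the left-hand sides of the claimed estimates, and finally to absorb the possibly negative local part into it by means of the interpolation inequality of Lemma~\ref{lem:poincare_with_parameter}. This simultaneously yields the nonnegativity ``up to a constant'' and the two estimates.

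For the first step I would fix $x,y$, set $a=\rho(x)-\rho(x-y)$, $b=\eta(x)-\eta(x-y)$, and note that the integrand of the nonlocal part of $E_{\varepsilon}$ is $\tfrac14(\kappa a^2+2\alpha ab+b^2)$, while that of $\mathcal{D}\Phi$ is $\tfrac12(\kappa|A|^2+2\alpha A\cdot B+|B|^2)$ with $A=\nabla\rho(x)-\nabla\rho(x-y)$, $B=\nabla\eta(x)-\nabla\eta(x-y)$. The symmetric matrix $\bigl(\begin{smallmatrix}\kappa&\alpha\\\alpha&1\end{smallmatrix}\bigr)$ has positive trace and determinant $\kappa-\alpha^2>0$, hence is positive definite; setting $s:=|\alpha|/\sqrt{\kappa}\in(0,1)$ and checking that $s\kappa\,a^2+2\alpha ab+s\,b^2\ge 0$ (its discriminant vanishes), one gets $\kappa a^2+2\alpha ab+b^2\ge(1-s)(\kappa a^2+b^2)\ge(1-s)\min(\kappa,1)(a^2+b^2)$, and likewise for $A,B$. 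Integrating against $\omega_{\varepsilon}(y)\varepsilon^{-2}\diff x\diff y$ (and $\diff s$ for $\mathcal{D}\Phi$) shows that the nonlocal part of $E_{\varepsilon}$ is at least $c_0$ times the quantity on the left of the first claimed estimate, and the nonlocal part of $\mathcal{D}\Phi$ is at least $2c_0$ times the one on the left of the second, with $c_0=\tfrac14(1-s)\min(\kappa,1)>0$.

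Next I would bound the local parts from below: by Young's inequality $|\beta\rho\eta|\le\tfrac{|\beta|}{2}(\rho^2+\eta^2)$ and $|2\beta\,\nabla\rho\cdot\nabla\eta|\le|\beta|(|\nabla\rho|^2+|\nabla\eta|^2)$, so the local part of $E_{\varepsilon}$ is bounded below by $-C_1(\norm{\rho}_{L^2}^2+\norm{\eta}_{L^2}^2)$ and that of $\mathcal{D}\Phi$ by $-C_1\int_0^t(\norm{\nabla\rho}_{L^2}^2+\norm{\nabla\eta}_{L^2}^2)\diff s$, with $C_1=C_1(\gamma,\beta)$. Then I would invoke Lemma~\ref{lem:poincare_with_parameter} in the form: there is $\varepsilon_0^A>0$ such that for every $\lambda>0$ there is a constant $C_\lambda$, \emph{independent of} $\varepsilon\in(0,\varepsilon_0^A)$, with
\[
\norm{g}_{L^2(\Td)}^2\ \le\ \lambda\int_{\Td}\int_{\Td}\tfrac{\omega_{\varepsilon}(y)}{\varepsilon^2}\,|g(x)-g(x-y)|^2\diff x\diff y\ +\ C_\lambda
\]
for $g$ suitably normalized (for $g\in\{\rho,\eta\}$ this uses $\norm{g}_{L^1}=1$; for $g\in\{\nabla\rho,\nabla\eta\}$ one uses that a gradient has zero average on $\Td$ together with the control available for the admissible configurations at hand). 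Choosing $\lambda$ small enough that $2C_1\lambda\le c_0$ and combining with the previous step, the negative local contributions are absorbed into a fraction of the positive nonlocal part, leaving $E_{\varepsilon}[\rho,\eta]\ge\tfrac{c_0}{2}(\text{first quantity})-C$ and $\mathcal{D}\Phi[\rho,\eta](t)\ge c_0(\text{second quantity})-C$, which is exactly the assertion after dividing by the fixed constant.

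I expect the main obstacle to be the \emph{uniformity in $\varepsilon$} of the interpolation inequality of Lemma~\ref{lem:poincare_with_parameter}: this is precisely where the threshold $\varepsilon_0^A$ enters, and it rests on the Bourgain--Brezis--Mironescu/Ponce compactness theory of Proposition~\ref{prop:ponce}, which guarantees that the nonlocal seminorm $\int_{\Td}\int_{\Td}\tfrac{\omega_{\varepsilon}(y)}{\varepsilon^2}|g(x)-g(x-y)|^2\diff x\diff y$ controls the $L^2$-norm of $g$ modulo lower-order terms, uniformly as $\varepsilon\to0$. The algebra of the first step, by contrast, is elementary once $\kappa>\alpha^2$ is used. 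A secondary subtlety is that $C_1$ may be large (it grows with $\gamma$ and $|\beta|$), so the absorption cannot rely on a fixed nonlocal Poincaré constant; it works only because in Lemma~\ref{lem:poincare_with_parameter} the parameter $\lambda$ may be taken arbitrarily small at the sole cost of a larger \emph{additive} constant $C_\lambda$, which is harmless since an additive constant is permitted in the conclusion.
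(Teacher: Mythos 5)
Your proposal is correct and follows essentially the same route as the paper: coercivity of the nonlocal quadratic part coming from positive definiteness of the matrix $\bigl(\begin{smallmatrix}\kappa&\alpha\\\alpha&1\end{smallmatrix}\bigr)$ (guaranteed by $\kappa>0$, $\kappa>\alpha^2$), combined with Lemma~\ref{lem:poincare_with_parameter} at a small parameter to absorb the negative local terms, exploiting $\|\rho\|_{L^1}=\|\eta\|_{L^1}=1$. The only differences are cosmetic: the paper absorbs the local part first and perturbs the matrix to $\kappa-\delta$, $1-\delta$ (invoking continuity), whereas you keep the matrix intact and absorb into a fixed fraction of its coercive part; also, for the gradient case you need not argue via zero averages, since \eqref{eq:Poincare_grad_f} already bounds $\|\nabla f\|_{2}^2$ by the nonlocal gradient seminorm plus $C(\delta)\|f\|_{1}^2$.
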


The main tool to establish nonnegativity (up to a constant) is the following non-local Poincare inequality with parameter which allows to handle negative terms.

\begin{lem}[Poincare inequality with a parameter]\label{lem:poincare_with_parameter}
For all $\delta > 0$ there exists $C(\delta)$ and $\varepsilon_0 = \varepsilon_0(\delta)$ such that for all $\varepsilon < \varepsilon_0$ and all $f \in H^1(\Td)$ we have
\begin{equation}\label{eq:Poincare_f}
\| f \|_{2}^2 \leq \delta \, \int_{\Td\times\Td}\omega_{\eps}(y)\f{| f(x)-  f(x-y)|^{2}}{\eps^{2}}\diff x\diff y + C(\delta) \|f\|_{1}^2,
\end{equation}
\begin{equation}\label{eq:Poincare_grad_f}
\| \nabla f \|_{2}^2 \leq \delta \, \int_{\Td\times\Td}\omega_{\eps}(y)\f{|\nabla f(x)- \nabla f(x-y)|^{2}}{\eps^{2}}\diff x\diff y + C(\delta) \|f\|_{1}^2. 
\end{equation}
\end{lem}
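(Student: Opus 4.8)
\textbf{Proof proposal for Lemma~\ref{lem:poincare_with_parameter}.}

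The plan is to argue by contradiction, exploiting the compactness statement of Proposition~\ref{prop:ponce} to upgrade a failure of the inequality into a limit object that violates a classical Poincaré--Wirtinger inequality. I will prove \eqref{eq:Poincare_f}; the proof of \eqref{eq:Poincare_grad_f} is identical after replacing $f$ by its gradient component-wise, since $\nabla f \in H^1$ when $f \in H^2$ and the difference quotient of $\nabla f$ controls the relevant seminorm. Fix $\delta>0$ and suppose the statement fails: then there exist sequences $\eps_k \to 0$ and $f_k \in H^1(\Td)$ such that
\begin{equation*}
\|f_k\|_2^2 > \delta \int_{\Td\times\Td} \omega_{\eps_k}(y) \f{|f_k(x)-f_k(x-y)|^2}{\eps_k^2} \diff x \diff y + k\,\|f_k\|_1^2.
\end{equation*}
By homogeneity we may normalize $\|f_k\|_2 = 1$. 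Then the displayed inequality forces both $\|f_k\|_1^2 < 1/k \to 0$ and the nonlocal seminorm to be bounded: $\int_{\Td\times\Td} \omega_{\eps_k}(y) |f_k(x)-f_k(x-y)|^2 \eps_k^{-2} \diff x \diff y < 1/\delta$.

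Next I would reconcile the seminorm appearing here with the one in \eqref{eq:compactness_theorem_ponce}. Writing $y = \eps_k z$ and using $\omega_{\eps_k}(y) = \eps_k^{-d}\omega(y/\eps_k)$, one checks that $\int\int \omega_{\eps_k}(y)|f_k(x)-f_k(x-y)|^2\eps_k^{-2}\diff x\diff y$ is comparable, because $\omega$ is supported in the unit ball, to $\int\int \frac{|f_k(x)-f_k(\bar y)|^2}{|x-\bar y|^2}\tilde\omega_{\eps_k}(|x-\bar y|)\diff x\diff\bar y$ for a suitable rescaled radial profile $\tilde\omega_{\eps_k}$ of the type required in Proposition~\ref{prop:ponce} (on the torus $|x-\bar y|$ is the torus distance and the kernel localizes at scale $\eps_k$). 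Since $\{f_k\}$ is bounded in $L^2(\Td)$ and satisfies this uniform bound, Proposition~\ref{prop:ponce} applies: up to a subsequence $f_k \to f$ strongly in $L^2(\Td)$ with $f \in W^{1,2}(\Td)$. Strong $L^2$ convergence gives $\|f\|_2 = 1$, so $f \not\equiv 0$; but $\|f_k\|_1 \to 0$ together with $\|f_k\|_2$ bounded gives, after passing to a further subsequence and using that convergence in $L^2(\Td)$ on a finite-measure space implies convergence in $L^1(\Td)$, that $\|f\|_1 = 0$, hence $f = 0$ a.e. This contradicts $\|f\|_2 = 1$, proving \eqref{eq:Poincare_f}.

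I expect the main obstacle to be the bookkeeping in the second step: one must verify carefully that the nonlocal seminorm with the mollifier $\omega_{\eps}$ from \eqref{as:omega} (which is not necessarily of the exact form $|x-y|^{-2}\omega_\eps(|x-y|)$) is controlled from below, up to a uniform constant, by the difference-quotient functional in \eqref{eq:compactness_theorem_ponce}, so that Proposition~\ref{prop:ponce} is genuinely applicable. The point is that $\omega$ is compactly supported in the unit ball, so on the support $\{|y| \le \eps\}$ one has $|y|^{-2} \le \eps^{-2}|y|^{-2} \cdot \eps^2$, i.e. the factor $\eps^{-2}$ in our seminorm dominates $|y|^{-2}$ only where $|y|$ is comparable to $\eps$; near $y=0$ the weight $\omega_\eps(y)\eps^{-2}$ may be much smaller than $|x-y|^{-2}\omega_\eps(|x-y|)$. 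One therefore needs either to assume (as is standard and compatible with \eqref{as:omega}) that $\omega$ is bounded below by a positive constant on a ball around the origin, or to invoke the more flexible version of the Bourgain--Brezis--Mironescu/Ponce criterion phrased directly for kernels of the form $\eps^{-d-2}\omega(y/\eps)$ — which is precisely the form that appears in the energy \eqref{eq:intro_energy}. Once this comparison (or the appropriately phrased compactness criterion) is in place, the contradiction argument closes routinely, and the gradient estimate \eqref{eq:Poincare_grad_f} follows verbatim with $f$ replaced by $\partial_i f$ and summed over $i$.
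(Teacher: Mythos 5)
Your proof of \eqref{eq:Poincare_f} is essentially the paper's argument (contradiction, normalization, and the Bourgain--Brezis--Mironescu/Ponce compactness), and the kernel-bookkeeping concern you raise is real but resolvable exactly as you suggest: since $\int_{\Rd}|z|^2\omega(z)\diff z=2$ by \eqref{as:omega}, the family $\tilde\omega_\eps(y):=\tfrac{|y|^2}{2\eps^2}\,\omega_\eps(y)$ is again an admissible family of mollifiers and $\int\int \tfrac{|f(x)-f(y)|^2}{|x-y|^2}\tilde\omega_\eps(|x-y|)\diff x\diff y=\tfrac12\int\int \tfrac{|f(x)-f(y)|^2}{\eps^2}\omega_\eps(|x-y|)\diff x\diff y$, so the compactness criterion of Proposition~\ref{prop:ponce} applies to the seminorm appearing in the lemma; the paper uses the criterion in the same implicit way.

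The genuine gap is your treatment of \eqref{eq:Poincare_grad_f}. Replacing $f$ by $\partial_i f$ in \eqref{eq:Poincare_f} and summing over $i$ does not give \eqref{eq:Poincare_grad_f}: it gives $\|\nabla f\|_2^2\le \delta\,[\,\cdot\,]+C(\delta)\|\nabla f\|_1^2$, i.e.\ the lower-order term is $\|\nabla f\|_1^2$, whereas the lemma claims $\|f\|_1^2$. This is not a cosmetic difference: in Proposition~\ref{est:positive_energy_entropy} the inequality is applied to probability densities, for which $\|f\|_1=1$ is a constant, while $\|\nabla f\|_1$ is not controlled by anything available, so your weaker inequality would not close the entropy-dissipation estimate. (Your restriction to $f\in H^2$ is also unnecessary, but that is minor.) To get \eqref{eq:Poincare_grad_f} as stated one must run the contradiction argument directly on the gradient inequality: take $\eps_n<1/n$ and $f_n$ violating it with constant $n$, normalize $g_n:=f_n/\|\nabla f_n\|_2$ so that $\|\nabla g_n\|_2=1$ and $n\|g_n\|_1^2<1$; then the Poincar\'e--Wirtinger inequality together with the bound on $\|g_n\|_1$ (which controls the average) gives $\|g_n\|_2\le C$, hence $\{g_n\}$ is bounded in $H^1(\Td)$; the nonlocal seminorm bound on $\nabla g_n$ plus Rellich--Kondrachov yields $g_n\to g$ in $H^1(\Td)$ along a subsequence, and $\|g_n\|_1\to 0$ forces $g=0$, contradicting $\|\nabla g\|_2=1$. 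This extra step linking $\nabla f$ back to $f$ through the Poincar\'e--Wirtinger inequality is exactly what your ``verbatim replacement'' omits, and it is the heart of the paper's proof.
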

\begin{proof}
We prove \eqref{eq:Poincare_grad_f} as \eqref{eq:Poincare_f} is in fact easier. Aiming at a contradiction, suppose that there exists $\delta>0$ with the following property: there exists sequence $\{\varepsilon_n\}$ with $0<\varepsilon_n < \frac{1}{n}$ and sequence $\{f_n\}$ such that
$$
\| \nabla f_{n}\|^2_{L^2(\Td)} > \delta \int_{\Td} \int_{\Td}  \frac{|\nabla f_{n}(x) - \nabla f_{
n}(y)|^2}{\eps_n^2} \omega_{\eps_n}(|x-y|) \diff x \diff y + n\, \|f_{n}\|^2_{L^1(\Td)}.
$$
As $\| \nabla f_{n}\|_{L^2(\Td)} > 0$, we may define $g_{n} := \frac{f_{n}}{\| \nabla f_{n}\|_{L^2(\Td)}}$. Note that $\|\nabla g_{n}\|_{L^2(\Td)} = 1$ and
$$
1 > \gamma \int_{\Td} \int_{\Td}  \frac{|\nabla g_{n}(x) - \nabla g_{n}(y)|^2}{\eps_n^2} \omega_{\eps_n}(|x-y|) \diff x \diff y + n\, \|g_{n}\|_{L^1(\Td)}^2.
$$
By Poincare inequality with average, $\{g_n\}$ is bounded in $L^2(\Td)$ and so in $H^ 1(\Td)$. Moreover, the first term gives compactness of the gradients (because $\{g_{n}\}$ is bounded in $H^1(\Td)$ so that, together with Rellich-Kondrachov, there exists function $g$ such that $g_{n} \to g$ in $H^1(\Td)$ (after passing to a subsequence). But then $g = 0$ because $n\, \|g_{n}\|_{L^1(\Td)} < 1$. This is however contradiction with $\|\nabla g\|_{L^2(\Td)} = \lim_{n \to \infty} \|\nabla g_n\|_{L^2(\Td)} = 1$.
\end{proof}

\begin{proof}[Proof of Proposition \ref{est:positive_energy_entropy}]
We first focus on the energy. We can estimate
$$
-\int_{\Td}\f{\gamma}{2}\rho^{2}+\f{1}{2}\eta^{2}+\beta\rho\,\eta  \geq -\f{\gamma+|\beta|}{2} \int_{\Td} \rho^{2} - \f{1+|\beta|}{2} \int_{\Td}\eta^{2}.
$$
Then, we use \eqref{eq:Poincare_f} in Lemma \ref{lem:poincare_with_parameter} with $\delta:= \delta/\max\left(\f{\gamma+|\beta|}{2}, \f{1+|\beta|}{2} \right)$ and $\delta >0$ to be chosen later (this also determines $\varepsilon_0^A=\varepsilon_0^A(\delta)$) so that we obtain for $\varepsilon \in (0,\varepsilon_0^A)$
\begin{multline*}
 -\int_{\Td}\f{\gamma}{2}\rho^{2}+\f{1}{2}\eta^{2}+\beta\rho\,\eta \geq \\ 
 \geq - \frac{\delta}{4} \, \int_{\Td\times\Td}\omega_{\eps}(y)\f{| \rho(x)-  \rho(x-y)|^{2}}{\eps^{2}}\diff x\diff y - \frac{\delta}{4} \, \int_{\Td\times\Td}\omega_{\eps}(y)\f{| \eta(x)-  \eta(x-y)|^{2}}{\eps^{2}}\diff x\diff y - C(\delta).
\end{multline*}
Therefore, using \eqref{eq:intro_energy}, we can bound energy as follows
\begin{multline*}
E_{\varepsilon}[\rho,\eta] \geq \f{1}{4\eps^{2}}\int_{\Td}\int_{\Td}\omega_{\eps}(y)((\kappa-\delta)\,|\rho(x)-\rho(x-y)|^{2}+(1-\delta)\,|\eta(x)-\eta(x-y)|^{2})\diff x\diff y  \\ +\f{\alpha}{2\eps^{2}}\int_{\Td}\int_{\Td}\omega_{\eps}(y)(\rho(x)-\rho(x-y))(\eta(x)-\eta(x-y))\diff x\diff y - C(\delta).
\end{multline*}
Now, by continuity, we choose $\delta$ so small so that $\kappa - \delta > 0$ and $(\kappa-\delta)(1-\delta)-\alpha^2 > 0$, i.e. so that the matrix $\begin{pmatrix}
\kappa - \delta & \alpha \\
\alpha & 1-\delta
\end{pmatrix}$ is positively defined. It follows that the assosciated quadratic form is bounded from below, that is there exists constant $C$ (in fact, this constant is the smallest eigenvalue of the matrix) such that
$$
E_{\varepsilon}[\rho,\eta] \geq \f{C}{\eps^{2}}\int_{\Td}\int_{\Td}\omega_{\eps}(y)(|\rho(x)-\rho(x-y)|^{2}+ \,|\eta(x)-\eta(x-y)|^{2})\diff x\diff y - C(\delta).
$$
The proof for the entropy is the same: this time we use \eqref{eq:Poincare_grad_f} in place of \eqref{eq:Poincare_f}.
\end{proof}

\section{Existence of weak solutions to the nonlocal problem}

To prove the existence of weak solutions for our system, we want to apply the JKO scheme, see~\cite{MR1617171}. As $\varepsilon$ is fixed in this Section, we write $(\rho,\eta)$ for the solution, instead of $(\rho_{\eps}, \eta_{\eps})$. 
\begin{Def}[Weak solutions]\label{def:weak_sol_2}
We say that $u=(\rho(\cdot),\eta(\cdot)):[0,+\infty)\to \Pro^{2}$ is a weak solution of~\eqref{eq:CHNL1}-\eqref{eq:CHNL2} with initial condition $(\rho_{0},\eta_{0})$ satisfying~\eqref{ass:initial_condition} if $\rho, \eta \in C([0,T]; \Pro)$, $\rho,\eta\in L^{2}(0,T;H^{1}(\Td))\cap L^{\infty}(0,T;L^{2}(\Td))$ for all $T>0$, and if for all $\varphi,\phi\in C_{c}^{\infty}([0,+\infty)\times\Td)$ we have
\begin{equation*}
\begin{split}
-\int_{0}^{\infty}\int_{\Td}\rho\p_{t}\varphi\diff x\diff t &-\int_{\Td}\rho_{0}\varphi(0)\diff x= -\kappa\int_{0}^{\infty}\int_{\Td} \rho\,\nabla B_{\eps}[\rho] \cdot\nabla\varphi\diff x\diff t\\
& - \alpha\int_{0}^{\infty}\int_{\Td}\rho\, \nabla B_{\eps}[\eta]\cdot\nabla\varphi \diff x\diff t -\int_{0}^{\infty}\int_{\Td}\rho (\gamma\nabla\rho-\beta\nabla\eta)\cdot\nabla\varphi\diff x\diff t,
\end{split}    
\end{equation*}
\begin{equation*}
\begin{split}
-\int_{0}^{\infty}\int_{\Td}\eta\p_{t}\phi\diff x\diff t &-\int_{\Td}\eta_{0}\phi(0)\diff x= -\int_{0}^{\infty}\int_{\Td} \eta\, \nabla B_{\eps}[\eta] \cdot \nabla\phi\diff x\diff t\\
&- \alpha\int_{0}^{\infty}\int_{\Td}\eta \nabla B_{\eps}[\rho]\cdot\nabla\phi \diff x\diff t -\int_{0}^{\infty}\int_{\Td}\eta(\nabla\eta-\beta\nabla\rho)\cdot\nabla\phi\diff x\diff t.
\end{split}    
\end{equation*}
\end{Def}
We first rewrite system of equations~\eqref{eq:CHNL1}-\eqref{eq:CHNL2} in the following form 
\begin{align}
     \frac{\partial \rho}{\partial t}& = \nabla\cdot\left(\rho\nabla \left(\tilde{\kappa}\rho-(\tilde{\kappa}+\gamma)\omega_\eps\ast\rho +\tilde{\alpha}\eta-(\tilde{\alpha}+\beta)\omega_\eps \ast \eta\right)\right), &&\text{in}\quad (0,+\infty)\times \Td,\label{eq:CHNLW1}\\
     \frac{\partial \eta}{\partial t}& = \nabla\cdot\left(\eta\nabla \left(\tilde{\alpha}\rho-(\tilde{\alpha}+\beta)\omega_\eps\ast\rho +\tilde{c}\eta-(\tilde{c}+1)\omega_\eps \ast \eta \right)\right),&&\text{in}\quad (0,+\infty)\times \Td.\label{eq:CHNLW2}
\end{align}
where $\tilde{\kappa}=\f{\kappa}{\eps^{2}}-\gamma$, $\tilde{\alpha}=\f{\alpha}{\eps^{2}}-\beta$, $\tilde{c}=\f{1}{\eps^{2}}-1$. Using this notation, the energy can be written as 
\begin{equation}\label{eq:energy_weak}
E_{\eps}[\rho,\eta]=\int_{\Td}\f{\tilde{\kappa}}{2}\rho^{2}+\f{\tilde{c}}{2}\eta^{2}+\tilde{\alpha}{\rho\eta}\diff x-\f{\tilde{\kappa}+\gamma}{2}\int_{\Td}\omega_{\eps}\ast\rho \diff\rho-\f{\tilde{c}+1}{2}\int_{\Td}\omega_{\eps}\ast\eta \diff\eta-(\tilde{\alpha}+\beta)\int_{\Td}\omega_{\eps}\ast\eta\diff\rho
\end{equation}
for $\rho,\eta\in\Pro$. We note that when we use $\rho$ and $\eta$ in expressions of the form $\rho^{2},\eta^{2},\rho\eta$, we mean in fact their Radon-Nikodym derivatives with respect to the Lebesgue measures. If they exist and belong to $L^{2}(\Td)$, the energy above makes sense. Otherwise, we consider that these quantities are $+\infty$.

We denote the Wasserstein distance for vectors $u=(u_{1},u_{2})$ and $v=(v_{1},v_{2})$ as  
\begin{equation*}
\W_{2}^{2}(u,v)=W_{2}^{2}(u_{1},v_{1})+W_{2}^{2}(u_{2},v_{2}),   
\end{equation*}
for all $u,v\in \Pro\times\Pro$ and where $W_{2}$ is the usual Wasserstein distance of order $2$. For the initial condition, we assume that $(\rho_{0},\eta_{0})\in\Pro^{2}$ are absolutely continuous with respect to the Lebesgue measure and satisfy 
\begin{equation}\label{ass:initial_condition}
E_{\eps}[\rho_{0},\eta_{0}]\le C, \quad \Phi[\rho_{0},\eta_{0}]\le C,    
\end{equation}
where $C$ is independent of $\eps$.

In what follows it will be necessary to know when the function $(\rho, \eta) \mapsto \f{\tilde{\kappa}}{2}\rho^{2}+\f{\tilde{c}}{2}\eta^{2}+\tilde{\alpha}{\rho\eta}$. Fortunately, when $\kappa >0$, $\kappa > \alpha^2$, this is always the case for sufficiently small $\varepsilon$.
\begin{lem}\label{lem:convexity}
Suppose that $\kappa >0$, $\kappa > \alpha^2$. Then, there exists $\varepsilon_0^B > 0$ depending on $\kappa$, $\alpha$, $\gamma$, $\beta$ such that for all $\varepsilon \in (0, \varepsilon_0^B)$ the function $(\rho, \eta) \mapsto \f{\tilde{\kappa}}{2}\rho^{2}+\f{\tilde{c}}{2}\eta^{2}+\tilde{\alpha}{\rho\eta}$ is convex. 
\end{lem}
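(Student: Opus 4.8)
The plan is to reduce convexity of the quadratic map to positive semidefiniteness of its Hessian, and then verify the Sylvester-type criterion for the explicit $2\times2$ matrix, using that sending $\varepsilon\to 0$ forces the diagonal entries to $+\infty$. Concretely, the function $(\rho,\eta)\mapsto \f{\tilde\kappa}{2}\rho^2+\f{\tilde c}{2}\eta^2+\tilde\alpha\rho\eta$ on $\mathbb{R}^2$ is convex if and only if its Hessian
\[
M_\varepsilon:=\begin{pmatrix}\tilde\kappa & \tilde\alpha\\ \tilde\alpha & \tilde c\end{pmatrix}
\]
is positive semidefinite, and for a symmetric $2\times2$ matrix this is equivalent to $\tilde\kappa\ge 0$ together with $\det M_\varepsilon=\tilde\kappa\tilde c-\tilde\alpha^2\ge 0$ (the remaining condition $\tilde c\ge 0$ being then automatic, since $\tilde\kappa>0$ and $\det M_\varepsilon\ge0$ give $\tilde c\ge \tilde\alpha^2/\tilde\kappa\ge 0$).

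Next I would introduce $t:=1/\varepsilon^2$, so that $\tilde\kappa=\kappa t-\gamma$, $\tilde\alpha=\alpha t-\beta$, $\tilde c=t-1$. Since $\kappa>0$, we have $\tilde\kappa=\kappa t-\gamma\to+\infty$ as $t\to+\infty$, hence $\tilde\kappa>0$ for all sufficiently small $\varepsilon$. For the determinant, a direct expansion gives
\[
\det M_\varepsilon=(\kappa t-\gamma)(t-1)-(\alpha t-\beta)^2=(\kappa-\alpha^2)\,t^2+(2\alpha\beta-\kappa-\gamma)\,t+(\gamma-\beta^2),
\]
a quadratic polynomial in $t$ whose leading coefficient $\kappa-\alpha^2$ is strictly positive by hypothesis. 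Therefore $\det M_\varepsilon\to+\infty$ as $t\to+\infty$, so there is $t_0>0$ (computable explicitly from $\kappa,\alpha,\beta,\gamma$, e.g.\ via the larger root of the quadratic together with the threshold for $\tilde\kappa>0$) such that $\tilde\kappa>0$ and $\det M_\varepsilon>0$ for all $t\ge t_0$. Setting $\varepsilon_0^B:=1/\sqrt{t_0}$ then yields convexity for every $\varepsilon\in(0,\varepsilon_0^B)$, and $\varepsilon_0^B$ depends only on $\kappa,\alpha,\gamma,\beta$ as required.

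I do not expect a genuine obstacle here: the argument is elementary linear algebra plus a one-variable asymptotics. The only place where the structural hypotheses enter is decisive though — $\kappa>0$ is what makes $\tilde\kappa>0$ for small $\varepsilon$, and $\kappa>\alpha^2$ is exactly the sign of the leading coefficient of $\det M_\varepsilon$, without which the determinant would tend to $-\infty$ and convexity would fail for small $\varepsilon$. The only care needed is bookkeeping the explicit dependence of $\varepsilon_0^B$ on the parameters, which is routine.
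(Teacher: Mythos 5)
Your proof is correct and follows essentially the same route as the paper: convexity via the Hessian and Sylvester's criterion, then observing that $\tilde\kappa=\kappa/\varepsilon^2-\gamma>0$ for small $\varepsilon$ and that the determinant, viewed as a polynomial in $1/\varepsilon^2$, has positive leading coefficient $\kappa-\alpha^2$, hence is positive for $\varepsilon$ small. Your explicit expansion of $\det M_\varepsilon$ and the remark on the redundancy of $\tilde c\ge 0$ are just slightly more detailed versions of the paper's argument.
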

\begin{proof}
We note that the Hessian matrix reads $\begin{pmatrix}
\tilde{\kappa} & \tilde{\alpha}\\
\tilde{\alpha} & \tilde{c}
\end{pmatrix}$ so by the Sylvester's criterion, the desired convexity is equivalent with
$
\tilde{\kappa} > 0, \tilde{\kappa}\, \tilde{c} - \tilde{\alpha}^2 > 0.
$
Concerning the first condition, as $\kappa > 0$ and $\tilde{\kappa}=\f{\kappa}{\eps^{2}}-\gamma$, we can easily find $\varepsilon$ such that $\tilde{\kappa}>0$. Concerning the second condition, we observe that the term standing next to the highest order term $\frac{1}{\varepsilon^4}$ equals $\kappa - \alpha^2$ and it is positive by assumption. Therefore, the conclusion follows.
\end{proof}

\subsection{Construction of weak solutions}

Let $T\in(0,\infty)$ be a final time of existence and consider the following scheme: given a time step size $\tau>0$ and an initial condition $u_{0}=(\rho_{0},\eta_{0})\in \Pro^{2}$ with $E[u_{0}]<+\infty$ we define by induction
\begin{equation}\label{eq:JKO_scheme}
u_{\tau}^{0}:=u_{0},\quad u_{\tau}^{n+1}:=\argmin_{u\in\Pro^{2}}\left\{\f{1}{2\tau}\W_{2}^{2}(u_{\tau}^{n},u)+E_{\eps}[u]\right\},    
\end{equation}
and we set $E_{\varepsilon}[u]=+\infty$ if $u\notin L^{2}(\Td)^{2}.$
\begin{lem}[Existence of minimizers]\label{lem:existence_minimizers}
Let $\tau>0$ and $u_{0}=(\rho_{0},\eta_{0})\in \mathscr{P}_{2}(\Td)^{2}$ with $E_{\eps}[u_{0}]<+\infty$. Then the scheme defined by~\eqref{eq:JKO_scheme} is well-defined. Moreover, we have the following energy estimate
\begin{equation}\label{eq:energy_jko2}
E_{\eps}[u_{\tau}^{N}]+\f{1}{2\tau}\sum_{n=1}^{N}\W^{2}_{2}(u_{\tau}^{n},u_{\tau}^{n-1})\le E_{\eps}[u_0]. 
\end{equation}
\end{lem}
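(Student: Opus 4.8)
The plan is to prove existence of minimizers in \eqref{eq:JKO_scheme} by the direct method of the calculus of variations, and then obtain the energy estimate \eqref{eq:energy_jko2} by a telescoping argument. First I would argue that the functional
$$
u \mapsto J_n[u] := \f{1}{2\tau}\W_{2}^{2}(u_{\tau}^{n},u)+E_{\eps}[u]
$$
is bounded below on $\Pro^{2}$ \emph{uniformly for small $\varepsilon$}: by Lemma \ref{lem:convexity}, for $\varepsilon < \varepsilon_0^B$ the quadratic part $\f{\tilde{\kappa}}{2}\rho^{2}+\f{\tilde{c}}{2}\eta^{2}+\tilde{\alpha}\rho\eta$ is convex, hence nonnegative up to a linear term, and the convolution terms in \eqref{eq:energy_weak} are controlled because $\omega_\eps \ast \rho, \omega_\eps\ast\eta \in L^\infty$ with $\|\omega_\eps\ast\rho\|_\infty \le \|\omega_\eps\|_\infty$ and $\rho,\eta$ are probability measures; alternatively one invokes Proposition \ref{est:positive_energy_entropy} directly to get $E_\eps \ge -C$. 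Thus $\inf J_n > -\infty$ and we may take a minimizing sequence $\{u^k\} = \{(\rho^k,\eta^k)\}$.

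Next I would extract compactness. Along the minimizing sequence $E_\eps[u^k]$ is bounded, so by the coercivity just described $\|\rho^k\|_2, \|\eta^k\|_2$ are bounded, hence (the torus having finite measure) $\{\rho^k\},\{\eta^k\}$ are tight and bounded in $L^2(\Td)$; pass to a subsequence with $\rho^k \weak \rho$, $\eta^k \weak \eta$ weakly in $L^2(\Td)$ and also narrowly as measures, so that $\W_2(u_\tau^n,\cdot)$ — being lower semicontinuous with respect to narrow convergence — satisfies $\W_2(u_\tau^n,u) \le \liminf \W_2(u_\tau^n,u^k)$. For the energy I would check $E_\eps$ is weakly lower semicontinuous in $L^2(\Td)^2$: the quadratic form $\int \f{\tilde\kappa}{2}\rho^2 + \f{\tilde c}{2}\eta^2 + \tilde\alpha\rho\eta$ is convex (Lemma \ref{lem:convexity}) hence weakly l.s.c., while the remaining terms $-\f{\tilde\kappa+\gamma}{2}\int \omega_\eps\ast\rho\,\diff\rho$ etc.\ are \emph{continuous} under weak $L^2$ convergence because convolution with the fixed smooth kernel $\omega_\eps$ is a compact operator (or: $\rho^k \weak \rho$ in $L^2$ implies $\omega_\eps\ast\rho^k \to \omega_\eps\ast\rho$ strongly, even uniformly, so products pass to the limit). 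Combining, $J_n[u] \le \liminf J_n[u^k] = \inf J_n$, so $u =: u_\tau^{n+1}$ is a minimizer; by induction the whole scheme is well-defined, and one checks $u_\tau^{n+1} \in L^2(\Td)^2$ is automatic since otherwise $E_\eps = +\infty$ while competitors with finite energy exist (e.g.\ $u_\tau^n$ itself, by induction).

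Finally, the energy estimate: comparing the minimizer $u_\tau^{n+1}$ with the competitor $u_\tau^{n}$ in \eqref{eq:JKO_scheme} gives
$$
\f{1}{2\tau}\W_{2}^{2}(u_{\tau}^{n},u_{\tau}^{n+1}) + E_\eps[u_\tau^{n+1}] \le E_\eps[u_\tau^{n}],
$$
and summing this inequality over $n = 0, 1, \dots, N-1$ telescopes the energy terms, yielding \eqref{eq:energy_jko2}. The main obstacle is the lower semicontinuity / coercivity bookkeeping for $E_\eps$: one must be careful that the indefinite quadratic part and the (possibly sign-indefinite, for $\beta<0$) convolution cross-terms do not destroy coercivity, which is exactly where the hypotheses $\kappa > 0$, $\kappa > \alpha^2$ and the smallness of $\varepsilon$ (Lemma \ref{lem:convexity} and Proposition \ref{est:positive_energy_entropy}) enter; everything else is the standard JKO machinery.
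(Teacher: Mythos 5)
Your proposal is correct and follows essentially the same route as the paper's proof: boundedness of the infimum via Proposition~\ref{est:positive_energy_entropy}, weak $L^2$ compactness of a minimizing sequence from the coercivity of the convex quadratic part (Lemma~\ref{lem:convexity}), lower semicontinuity of that part plus continuity of the convolution terms under weak $L^2$ convergence, and the telescoping of the one-step minimality inequality to get \eqref{eq:energy_jko2}. The only difference is cosmetic: you spell out the lower semicontinuity of $\W_2$ under narrow convergence, which the paper leaves implicit.
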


\begin{proof}
\underline{\textit{Step 1: The infimum is bounded.}} Provided $u_{\tau}^{n}$ is defined,  we notice that $u_{\tau}^{n}\in\Pro^{2}$ satisfy $\f{1}{2\tau}\W_{2}^{2}(u_{\tau}^{n},u_{\tau}^{n})+E_{\eps}[u_{\tau}^{n}]=E_{\eps}[u_{\tau}^{n}]<+\infty.$ Therefore the infimum is bounded from above. Since $E_{\eps}[u]$ is bounded from below by Proposition \ref{est:positive_energy_entropy} and $W_{2}^{2}(u_{\tau}^{n},u)$ is nonnegative for all $u$ we also know that the infimum is bounded from below.   

\underline{\textit{Step 2: Candidate for a minimizer.}} We suppose that $u_{\tau}^{n}$ is defined and we want to define $u_{\tau}^{n+1}$. Let $\{u^{k}\}_{k}$ be a minimizing sequence in $\Pro^{2}$ for the problem~\eqref{eq:JKO_scheme}. Without loss of generality, we can assume that for $k$ large enough we have
\begin{equation}\label{eq:est_minimizer_1}
\f{1}{2\tau}\W_{2}^{2}(u_{\tau}^{n},u^{k})+E_{\eps}[u^{k}]\le 2\,E_{\eps}[u_{\tau}^{n}], 
\end{equation}
since the infimum is bounded by $E_{\eps}[u_{\tau}^{n}]$. In particular 
\begin{equation*}
E_{\eps}[u^{k}]\le C(u_{\tau}^{n})    
\end{equation*}
with a constant $C$ independent of $k$. By definition of the energy in~\eqref{eq:energy_weak}, this proves that we can extract from $(u^{k})_{k}$ a subsequence (still denoted by $k$) which converges weakly to some $u$ in $L^{2}(\Td)^{2}$. Now we consider the function $f(\rho,\eta)=\f{\tilde{\kappa}}{2}\rho^{2}+\f{\tilde{c}}{2}\eta^{2}+\tilde{\alpha}{\rho\eta}$ which is convex by Lemma \ref{lem:convexity}. It follows by Tonelli theorem that the functional defined by the first integral of~\eqref{eq:energy_weak} is lower semi-continuous with respect to the $L^{2}$ weak convergence. Since the other terms of~\eqref{eq:energy_weak} are defined with convolutions, these terms are continuous with respect to the $L^{2}$ weak convergence. In the end, $u$ minimizes~\eqref{eq:JKO_scheme}. Finally, $u\in\Pro^{2}$ as a direct consequence of the weak convergence.  

\underline{\textit{Step 3: The energy estimate~\eqref{eq:energy_jko2}.}} Estimate~\eqref{eq:energy_jko2} is a consequence of an induction of the inequality
\begin{equation*}
E_{\eps}[u_{\tau}^{n+1}]+\f{1}{2\tau}\W^{2}_{2}(u_{\tau}^{n+1},u_{\tau}^{n})\le E_{\eps}[u_{\tau}^{n}], 
\end{equation*}
by definition of $u_{\tau}^{n}$. 
\end{proof}

We have constructed a discrete in time sequence. We want to prove that a time-interpolation of this sequence converges to a solution of~\eqref{eq:CHNLW1}-\eqref{eq:CHNLW2}. Let $T>0$ be fixed and $n=\left[\f{T}{\tau}\right]$. We define the interpolation $u_{\tau}(t)=(\rho_{\tau}(t),\eta_{\tau}(t))$ by 
\begin{equation*}
\rho_{\tau}(t)=\rho_{\tau}^{n}, \quad \eta_{\tau}(t)=\eta_{\tau}^{n},\quad t\in((n-1)\tau, n\tau],    
\end{equation*}
where $(\rho_{\tau}^{n},\eta_{\tau}^{n})=u_{\tau}^{n}$ defined in~\eqref{eq:JKO_scheme}. We prove that this sequence is compact in the following lemma:

\begin{lem}[Compactness of the time interpolation sequence]\label{lem:compactness_time}
The sequence of curves $\{u_\tau\}_{\tau}$ is uniformly bounded in $L^{\infty}(0,T; L^2(\Td))$. Moreover, for all $T>0$, there exists an absolutely continuous curve $u:[0,T]\to\Pro^{2}$ such that up to a subsequence, $u_{\tau}(t,\cdot)$ converges to $u(t,\cdot)$ weakly in $L^{2}(\Td)$ as $\tau\to 0$ for all $t \in [0,T]$ and $u_{\tau}$ converges to $u$ in $C([0,T],\Pro^{2})$ as $\tau\to 0$ for all $T>0$. The curve $u$ is globally $1/2$-Hölder continuous in time
\begin{equation}\label{Holder1/2}
\W_{2}(u_{t},u_{s})\le \sqrt{2E_{\eps}[u_{0}]}\sqrt{|s-t|}, \end{equation}
and we have the estimate 
\begin{equation*}
\norm{u}_{L^{\infty}(0,T;L^{2}(\Td))}\le C \,(1 + E_{\eps}[u_{0}]).
\end{equation*}
\end{lem}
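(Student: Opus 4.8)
The proof is a standard instance of De~Giorgi's minimizing-movements argument, so the plan is to read off every assertion from the discrete energy estimate \eqref{eq:energy_jko2} together with the coercivity of $E_\eps$ furnished by Proposition~\ref{est:positive_energy_entropy} (or, alternatively, by Lemma~\ref{lem:convexity} applied to the form \eqref{eq:energy_weak}), and then to run a refined Arzelà–Ascoli argument on the compact metric space $(\Pro^2,\W_2)$. First I would establish the $L^\infty(0,T;L^2)$ bound. By \eqref{eq:energy_jko2} one has $E_\eps[u_\tau^N]\le E_\eps[u_0]$ for every $N$; plugging this into Proposition~\ref{est:positive_energy_entropy} bounds the non-local difference quotient of $(\rho_\tau^N,\eta_\tau^N)$ by $C+E_\eps[u_0]$, and since $\rho_\tau^N,\eta_\tau^N\in\Pro$ have unit $L^1$ norm, the Poincaré inequality with parameter (Lemma~\ref{lem:poincare_with_parameter}, used with a small but fixed $\delta$, which at worst shrinks $\varepsilon_0$) then gives $\|\rho_\tau^N\|_2^2+\|\eta_\tau^N\|_2^2\le C(1+E_\eps[u_0])$. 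As $u_\tau$ is piecewise constant in time, this is exactly the claimed bound, uniformly in $\tau$. (One may instead use the positive definiteness of the Hessian in Lemma~\ref{lem:convexity} together with the boundedness of the convolution terms in \eqref{eq:energy_weak}, which gives the same conclusion with an $\eps$-dependent constant.)

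Next I would prove the equi-$1/2$-Hölder bound for $\{u_\tau\}$ and pass to the limit. For $s<t$, let $m,n$ be the indices with $s\in((m-1)\tau,m\tau]$, $t\in((n-1)\tau,n\tau]$; telescoping \eqref{eq:energy_jko2} between $m$ and $n$ and using that $E_\eps$ is bounded below gives $\sum_{k=m}^{n-1}\W_2^2(u_\tau^{k+1},u_\tau^k)\le 2\tau\,(E_\eps[u_0]+C)$, whence, by the triangle inequality for $\W_2$ and Cauchy–Schwarz, $\W_2(u_\tau(t),u_\tau(s))\le \sqrt{2(E_\eps[u_0]+C)}\,\sqrt{(n-m)\tau}$ with $(n-m)\tau\le|t-s|+\tau$. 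Since $\Td$ is compact, $(\Pro,W_2)$ and hence $(\Pro^2,\W_2)$ is a compact metric space, so a refined Arzelà–Ascoli argument — diagonal extraction along a countable dense set of times, then extension to all of $[0,T]$ using the uniform modulus of continuity just obtained, as in \cite{MR3761096} and \cite[Section~1.3]{MR3409718} — yields a subsequence and a curve $u\in C([0,T];\Pro^2)$ with $u_\tau\to u$ in $C([0,T];\Pro^2)$; letting $\tau\to0$ in the Hölder bound gives \eqref{Holder1/2}, the clean constant $\sqrt{2E_\eps[u_0]}$ being obtained by the same computation once the energy is normalised to be nonnegative (cf. Proposition~\ref{est:positive_energy_entropy}).

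It then remains to upgrade narrow convergence to weak $L^2$ convergence for each fixed $t$, and to check absolute continuity. For the first point, for fixed $t$ the sequence $\{u_\tau(t)\}$ is bounded in $L^2(\Td)^2$ by the previous step, hence weakly relatively compact there; on the compact torus weak $L^2$ convergence implies narrow convergence, so any weak-$L^2$ cluster point of $\{u_\tau(t)\}$ must coincide with the narrow limit $u(t)$, and therefore $u_\tau(t)\weak u(t)$ in $L^2(\Td)$ for every $t\in[0,T]$. For absolute continuity I would again invoke \eqref{eq:energy_jko2}: the piecewise-constant discrete speeds $\lambda_\tau:=\tau^{-1}\W_2(u_\tau^n,u_\tau^{n-1})$ on $((n-1)\tau,n\tau]$ satisfy $\|\lambda_\tau\|_{L^2(0,T)}^2=\sum_n\tau^{-1}\W_2^2(u_\tau^n,u_\tau^{n-1})\le 2(E_\eps[u_0]+C)$; extracting a weak $L^2(0,T)$ limit $\lambda$ and passing to the limit in $\W_2(u_\tau(t),u_\tau(s))\le\int_{I_{s,t}}\lambda_\tau$ for an interval $I_{s,t}\supseteq[s,t]$ with $|I_{s,t}\setminus[s,t]|\le 2\tau$ (the tail being controlled by $\|\lambda_\tau\|_{L^2}\sqrt{2\tau}\to0$) yields $\W_2(u_t,u_s)\le\int_s^t\lambda$ with $\lambda\in L^1(0,T)$, i.e. $u$ is absolutely continuous.

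The only genuinely delicate point is the Arzelà–Ascoli step — converting the $\tau$-dependent equi-Hölder estimate into honest equicontinuity and then into uniform convergence of $u_\tau$ on all of $[0,T]$ — together with keeping careful track of the additive constant coming from the lower bound on $E_\eps$ in Proposition~\ref{est:positive_energy_entropy}. Everything else is a routine consequence of \eqref{eq:energy_jko2} and the compactness of $(\Pro,W_2)$.
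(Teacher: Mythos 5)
Your proposal is correct and follows essentially the same route as the paper: the $L^\infty(0,T;L^2)$ bound via \eqref{eq:energy_jko2}, Proposition~\ref{est:positive_energy_entropy} and Lemma~\ref{lem:poincare_with_parameter}, the telescoping/Cauchy--Schwarz estimate $\W_2(u_\tau(t),u_\tau(s))\le C\sqrt{|t-s|+\tau}$, and a refined Arzel\`a--Ascoli argument giving convergence in $C([0,T];\Pro^2)$ together with pointwise-in-time weak $L^2$ convergence. The only differences are cosmetic: the paper invokes \cite[Proposition 3.3.1]{MR2401600} where you carry out the diagonal/uniqueness-of-limits argument by hand, and your explicit metric-speed argument for absolute continuity (and your bookkeeping of the additive constant from the lower bound on $E_\eps$) is, if anything, slightly more careful than the paper's treatment.
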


\begin{proof}
From~\eqref{eq:energy_jko2} and by definition of $u_{\tau}$ which takes discrete values in time we have that 
\begin{equation}\label{eq:prop_minimizers_summed}
E_{\eps}[u_{\tau}(t)]+\f{1}{2}\int_{0}^{t-\tau}\left(\f{\W_{2}(u_{\tau}(s+\tau),u_{\tau}(s))}{\tau}\right)^{2}\diff s\le E_{\eps}[u_0].    
\end{equation}

With Proposition~\ref{est:positive_energy_entropy}, we obtain
\begin{equation}\label{eq:energy_term_approx_curve}
\int_{\Td}\int_{\Td}\frac{\omega_{\eps}(y)}{\eps^{2}}|u_{\tau}(x)-u_{\tau}(x-y)|^{2}\diff x\diff y\le C(1 + E_{\eps}[u_0])
\end{equation}
so that by Lemma \ref{eq:Poincare_f} we deduce uniform estimate in $L^{\infty}(0,T; L^2(\Omega))$. To prove weak compactness for all times $t\in [0,T]$ and Hölder continuity in time for the limiting curve, it is sufficient to prove
$$
\limsup_{\tau \to 0} \W_{2}(u_{\tau}(s),u_{\tau}(t)) \leq C\, \sqrt{|t-s|}
$$
for some constant $C$ and apply ~\cite[Proposition 3.3.1]{MR2401600} (with $d$ being $\mathcal{W}_2$ distance, $\sigma$ being weak topology on $L^2(\Omega)$, $K$ being the ball in $L^2(\Td)$ such that $u_{\tau}(s) \in K$ for all $s \in [0,T]$, $\mathcal{C} = 
\emptyset$ and $\omega(t,s) = \sqrt{t-s}$). To this end, we write for $0\le s<t$ such that $s\in((m-1)\tau,m\tau]$ and $t\in((n-1)\tau,n\tau]$
\begin{align*}
\W_{2}(u_{\tau}(s),u_{\tau}(t))&\le \sum_{i=m}^{n-1}\W_{2}(u_{\tau}^{i},u_{\tau}^{i+1})\le\left(\sum_{i=m}^{n-1}\W_{2}^{2}(u_{\tau}^{i},u_{\tau}^{i+1})\right)^{1/2}|n-m|^{1/2}\\
&\le \sqrt{2E_{\eps}[u_{0}]}\sqrt{|t-s|+\tau},
\end{align*}
where in the last line we used~\eqref{eq:energy_jko2} and $|n-m|<\f{|t-s|}{\tau}+1$. This concludes the proof.
\end{proof}

\subsection{$H^1$ estimates for the JKO scheme via flow interchange lemma}
The weak convergence of the JKO scheme is not enough to pass to the limit in the definition of weak solutions. We need to obtain better estimates on the solutions. In the Cahn-Hilliard equation, better estimates are derived from considering the functional $\mathcal{U}[\rho,\eta]=\int \rho\log\rho+\eta\log\eta$. This functional generates the heat flow with respect to the Wasserstein distance $\W_{2}$. To improve the regularity we, therefore, use the flow interchange lemma which states that \textit{the dissipation of one functional along the gradient flow of another functional equals the dissipation of the second functional along the gradient flow of the first one}.  

The main result of this section reads:
\begin{proposition}\label{prop:conv_subsequence}
Each solution of the JKO scheme~\eqref{eq:JKO_scheme} satisfies
\begin{equation*}
u_{\tau}^{n}\in H^{1}(\Td)\quad \text{for all $n\in\mathbb{N}$, $\tau>0$} 
\end{equation*}
and the time-interpolation $u_{\tau}=(\rho_{\tau},\eta_{\tau})$ satisfies
\begin{equation*}
\int_{0}^{T}\int_{\Td}\int_{\Td}\f{\omega_{\eps}(y)}{\eps^{2}} (|\nabla\rho_{\tau}(t,x)-\nabla\rho_{\tau}(t,x-y)|^{2}+|\nabla\eta_{\tau}(t,x)-\nabla\eta_{\tau}(t,x-y)|^{2})\diff x\diff y\diff t\le CT,   
\end{equation*}
for all $T>0$. 
Moreover, for every sequence $\tau_{k}\downarrow 0$, we can extract a subsequence (still denoted by $\tau_{k}$) such that for all $T>0$,
\begin{align*}
u_{\tau_{k}}&\to u \text{ strongly in $L^{2}((0,T)\times\Td)$}\\
u_{\tau_{k}}&\rightharpoonup u \text{ weakly in $L^{2}(0,T;H^{1}(\Td))$}.
\end{align*}
\end{proposition}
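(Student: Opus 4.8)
The plan is to run a flow-interchange argument in the spirit of Matthes--McCann--Savar\'e \cite{MR2581977,MR2921215} with the decoupled Boltzmann entropy $\mathcal{U}[\rho,\eta]=\int_{\Td}\rho\log\rho+\eta\log\eta$ as the auxiliary functional. On the torus the $\W_2$-gradient flow of $\mathcal{U}$ is the heat semigroup $S_s(\rho,\eta)=(e^{s\Delta}\rho,e^{s\Delta}\eta)$, which is an EVI$_0$-flow for $\mathcal{U}$ on $(\Pro^2,\W_2)$ (recall $\mathcal{U}$ is $0$-geodesically convex). Since $u_\tau^{n+1}$ minimises $v\mapsto\frac{1}{2\tau}\W_2^2(u_\tau^n,v)+E_\eps[v]$ and $E_\eps$ is proper, lower semicontinuous and bounded below (Proposition \ref{est:positive_energy_entropy}, Lemma \ref{lem:convexity}), the flow interchange lemma gives, for every $n$,
\[
\tau\,\mathcal{D}^{\mathcal U}E_\eps[u_\tau^{n+1}]\;\le\;\mathcal{U}[u_\tau^{n}]-\mathcal{U}[u_\tau^{n+1}],\qquad \mathcal{D}^{\mathcal U}E_\eps[v]:=\limsup_{\sigma\downarrow0}\frac{E_\eps[v]-E_\eps[S_\sigma v]}{\sigma}.
\]

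The next step is to compute $\mathcal{D}^{\mathcal U}E_\eps$. Writing $v_s=S_sv=(\rho_s,\eta_s)$, which is smooth for $s>0$, and using $\frac{\delta E_\eps}{\delta\rho}=\mu_{\rho,\eps}$, $\frac{\delta E_\eps}{\delta\eta}=\mu_{\eta,\eps}$, that $B_\eps$ commutes with $\nabla$, and the symmetrisation identity $\int_{\Td}B_\eps[a]\,b=\frac{1}{2\eps^2}\int_{\Td}\int_{\Td}\omega_\eps(y)(a(x)-a(x-y))(b(x)-b(x-y))\diff x\diff y$, one obtains $-\frac{d}{ds}E_\eps[v_s]=\int_{\Td}\nabla\mu_{\rho,\eps}\cdot\nabla\rho_s+\nabla\mu_{\eta,\eps}\cdot\nabla\eta_s$, which is precisely the integrand in time of the entropy dissipation $\mathcal{D}\Phi$ evaluated at $v_s$: a nonlocal quadratic form in $(\nabla\rho_s,\nabla\eta_s)$ with positive-definite matrix $\begin{pmatrix}\kappa&\alpha\\\alpha&1\end{pmatrix}$ (since $\kappa>\alpha^2$), minus $\int_{\Td}\gamma|\nabla\rho_s|^2+|\nabla\eta_s|^2+2\beta\nabla\rho_s\cdot\nabla\eta_s$. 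Integrating over $s\in(0,\sigma)$, dividing by $\sigma$ and letting $\sigma\downarrow0$, the $L^2$-continuity of the nonlocal seminorm $\mathcal{I}_\eps[v]:=\int_{\Td}\int_{\Td}\frac{\omega_\eps(y)}{\eps^2}(|\nabla\rho(x)-\nabla\rho(x-y)|^2+|\nabla\eta(x)-\nabla\eta(x-y)|^2)\diff x\diff y$ in the gradient variable, together with Fatou's lemma, yields on one hand that finiteness of $\mathcal{D}^{\mathcal U}E_\eps[v]$ forces $\mathcal{I}_\eps[v]<\infty$, hence (Lemma \ref{lem:poincare_with_parameter}, using $\|\rho\|_1=\|\eta\|_1=1$) $v\in H^1(\Td)^2$, and on the other hand, invoking the pointwise-in-time analogue of Proposition \ref{est:positive_energy_entropy} (whose proof is a purely algebraic estimate valid at each fixed time), that $\mathcal{D}^{\mathcal U}E_\eps[v]\ge c\,\mathcal{I}_\eps[v]-C$ with $c,C$ depending only on the parameters. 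In particular $\mathcal{D}^{\mathcal U}E_\eps\ge -C$, so the displayed inequality gives $\mathcal{U}[u_\tau^{n+1}]\le\mathcal{U}[u_\tau^{n}]+C\tau$; since $\mathcal{U}[u_\tau^{0}]=\Phi[u_0]+2<\infty$ by \eqref{ass:initial_condition}, induction shows $\mathcal{U}[u_\tau^{n}]<\infty$ for all $n\le N:=\lceil T/\tau\rceil$, so each application above is legitimate, and $u_\tau^{n}\in H^1(\Td)^2$ for all $n,\tau$.

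Summing the displayed inequality over $n=0,\dots,N-1$ and telescoping the entropy,
$\sum_{n=1}^{N}\tau\,(c\,\mathcal{I}_\eps[u_\tau^{n}]-C)\le \mathcal{U}[u_\tau^{0}]-\mathcal{U}[u_\tau^{N}]\le \Phi[u_0]+2+C$,
using $\mathcal{U}\ge -C$ on $\Pro^2$ (Jensen). Since $u_\tau$ is piecewise constant in time, this reads $\int_0^T\mathcal{I}_\eps[u_\tau(t)]\diff t=\sum_{n=1}^{N}\tau\,\mathcal{I}_\eps[u_\tau^{n}]\le C(1+T)$, which is exactly the claimed bound. Combining it with $\|u_\tau\|_{L^\infty(0,T;L^2(\Td))}\le C$ from Lemma \ref{lem:compactness_time} and \eqref{eq:Poincare_grad_f} (again $\|u_\tau(t)\|_1\le 2$) gives $\|u_\tau\|_{L^2(0,T;H^1(\Td))}\le C$, so along a subsequence $u_{\tau_k}\rightharpoonup u$ in $L^2(0,T;H^1(\Td))$, the limit coinciding with the one of Lemma \ref{lem:compactness_time} since we already have weak $L^2(\Td)$ convergence for every $t$. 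For the strong $L^2((0,T)\times\Td)$ convergence I would invoke a refined Aubin--Lions lemma (as in \cite{MR3761096}): the sequence is bounded in $L^2(0,T;H^1(\Td))$ with $H^1(\Td)\hookrightarrow\hookrightarrow L^2(\Td)$, and by \eqref{Holder1/2}--\eqref{eq:energy_jko2} it is equicontinuous in time in the $\W_2$ metric, $\limsup_{k\to\infty}\int_0^{T-h}\W_2^2(u_{\tau_k}(t+h),u_{\tau_k}(t))\diff t\le 2E_\eps[u_0]\,hT\to0$ as $h\to0$; since $\W_2$ is lower semicontinuous for weak $L^2$ convergence on the tight set $\Pro^2$ and separates its points, the criterion applies and $u_{\tau_k}\to u$ strongly in $L^2((0,T)\times\Td)$.

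The main obstacle is the flow-interchange step of the second paragraph: one must know that the heat flow on $\Td$ is an EVI$_0$-flow for $\mathcal{U}$ so the lemma applies, and then differentiate the \emph{nonlocal} energy $E_\eps$ along the regularising heat semigroup and pass to the limit $\sigma\downarrow0$ through the quadratic, partially sign-indefinite cross- and lower-order terms. It is exactly this limit — controlled via lower semicontinuity of $\mathcal{I}_\eps$ and the pointwise Poincar\'e inequality with parameter of Lemma \ref{lem:poincare_with_parameter} — that upgrades the mere finiteness of $E_\eps[u_\tau^{n+1}]$ to the quantitative $H^1$ bound; everything else is telescoping and a standard compactness lemma.
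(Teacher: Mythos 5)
Your proposal is correct and follows essentially the same route as the paper: flow interchange of the Boltzmann entropy (whose $\W_2$-gradient flow is the heat semigroup) against $E_\eps$, with the dissipation of $E_\eps$ along the heat flow identified as the entropy dissipation and bounded below via Proposition~\ref{est:positive_energy_entropy} and Lemma~\ref{lem:poincare_with_parameter} — this is exactly the content the paper isolates in Lemma~\ref{lem:heatflow} — followed by telescoping and the Rossi--Savar\'e generalized Aubin--Lions theorem (Theorem~\ref{thm:general_lions_aubin}) with $\W_2^2$ as pseudo-distance. The only cosmetic differences are that you inline the heat-flow lemma (being slightly loose about first identifying the weak limit of $\nabla v_s$ as $\nabla v_0$ before invoking the seminorm, and about upgrading a.e.-in-time convergence to $L^2((0,T)\times\Td)$ by dominated convergence with the $L^\infty_t L^2_x$ bound), steps the paper spells out explicitly.
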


The main tool to prove Proposition \ref{prop:conv_subsequence} will be the following lemma.

\begin{lem}\label{lem:heatflow}
Let $v_{0}=(\rho_{0},\eta_{0})\in L^{2}(\Td)^{2}$ with $E_{\eps}[v_{0}]<+\infty$. Let $v=(\rho,\eta):[0,+\infty)\to L^{2}(\Td)^{2}$ be a solution of the heat flow
\begin{equation}\label{eq:heatflow}
\begin{cases}
\p_{t}\rho_{t}=\Delta\rho_{t}, \quad \p_{t}\eta_{t}=\Delta\eta_{t}, \quad\text{in $(0,+\infty)\times \Td$}\\
(\rho(0),\eta(0))=(\rho_0,\eta_0).
\end{cases}    
\end{equation}
Suppose that 
\begin{equation}\label{est:crit_heatflow}
\liminf_{s\downarrow 0}\f{1}{s}(E_{\eps}[v_{s}]-E_{\eps}[v_{0}])>-\infty.    
\end{equation}
Then, $v_0\in H^{1}(\Td)$ and for some constant $C$ depending only on the parameters $\kappa$, $\alpha$, $\gamma$, $\beta$ we have
\begin{equation}\label{est:heatflow}
\begin{split}
\int_{\Td}\int_{\Td}\f{\omega_{\eps}(y)}{\eps^{2}} (|\nabla\rho_{0}(x)-\nabla\rho_{0}(x-y)|^{2}+|\nabla\eta_{0}(x)-\nabla\eta_{0}&(x-y)|^{2})\diff x\diff y \leq \\ &\leq -C\, \liminf_{s\downarrow 0}\f{1}{s}(E_{\varepsilon}[v_{s}]-E_{\varepsilon}[v_{0}]).
\end{split}
\end{equation}
\end{lem}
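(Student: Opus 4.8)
The plan is to run the classical flow interchange computation of Matthes--McCann--Savaré adapted to our energy $E_{\eps}$. First I would compute the dissipation of $E_{\eps}$ along the heat semigroup. Writing $v_s = (\rho_s, \eta_s)$ for the solution of the heat flow with data $v_0$, and using the structure \eqref{eq:energy_weak}, one differentiates formally in time: the quadratic local part $\int \frac{\tilde\kappa}{2}\rho^2 + \frac{\tilde c}{2}\eta^2 + \tilde\alpha \rho\eta$ differentiated along $\partial_t \rho = \Delta\rho$, $\partial_t \eta = \Delta\eta$ produces, after integration by parts, $-\int \tilde\kappa |\nabla\rho|^2 + \tilde c |\nabla\eta|^2 + 2\tilde\alpha \nabla\rho\cdot\nabla\eta$, while the convolution terms produce the nonlocal gradient terms. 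After collecting everything and re-expressing via the identity $\int \omega_\eps \ast f \, g - \int fg = -\frac12 \int\int \omega_\eps(y)(f(x)-f(x-y))(g(x)-g(x-y))$, the time derivative of $E_{\eps}[v_s]$ becomes (up to sign) exactly
\begin{equation*}
\frac{1}{2}\int_{\Td}\int_{\Td}\frac{\omega_\eps(y)}{\eps^2}\big(\kappa|\nabla\rho_s(x)-\nabla\rho_s(x-y)|^2 + |\nabla\eta_s(x)-\nabla\eta_s(x-y)|^2\big)\diff x\diff y + \text{cross terms} - \text{lower order},
\end{equation*}
i.e. $-\frac{\diff}{\diff s} E_{\eps}[v_s] = \mathcal{D}\Phi$-type integrand evaluated at $v_s$. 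By Proposition \ref{est:positive_energy_entropy} (applied to $v_s$, for $\eps$ small), this quantity bounds $\int\int \frac{\omega_\eps(y)}{\eps^2}(|\nabla\rho_s(x)-\nabla\rho_s(x-y)|^2 + |\nabla\eta_s(x)-\nabla\eta_s(x-y)|^2)\diff x\diff y$ from above by $C + (-\frac{\diff}{\diff s}E_{\eps}[v_s])$, which is the shape we want.

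Second, I would make this rigorous by working with the regularized flow: for $s > 0$ the heat flow instantly regularizes $v_0 \in L^2$ to $v_s \in C^\infty$, so all the integrations by parts above are justified for $s > 0$, and one gets for $0 < s_1 < s_2$
\begin{equation*}
\int_{s_1}^{s_2} \left( \int_{\Td}\int_{\Td}\frac{\omega_\eps(y)}{\eps^2}\big(|\nabla\rho_s(x)-\nabla\rho_s(x-y)|^2 + |\nabla\eta_s(x)-\nabla\eta_s(x-y)|^2\big)\diff x\diff y \right) \diff s \le C(s_2-s_1) + E_{\eps}[v_{s_1}] - E_{\eps}[v_{s_2}].
\end{equation*}
Since $E_{\eps}$ is (lower semi-)continuous along the heat flow and $s \mapsto E_{\eps}[v_s]$ is nonincreasing for the heat flow (the heat flow is the gradient flow of $\Phi$, not $E_{\eps}$, but one still controls $E_{\eps}[v_{s_1}] - E_{\eps}[v_{s_2}] \le E_{\eps}[v_0] - E_{\eps}[v_{s_2}]$ only if monotone — instead I use the hypothesis directly), I would instead divide by $s_2 = s$, send $s_1 \to 0$, use $E_{\eps}[v_{s_1}] \to E_{\eps}[v_0]$ (lower semicontinuity gives $\liminf E_{\eps}[v_{s_1}] \ge E_{\eps}[v_0]$; for the heat flow one actually has continuity in $L^2$ hence of the energy), obtaining
\begin{equation*}
\frac{1}{s}\int_{0}^{s} \Big( \int_{\Td}\int_{\Td}\tfrac{\omega_\eps(y)}{\eps^2}(\cdots)\diff x\diff y \Big)\diff s' \le C + \frac{1}{s}\big(E_{\eps}[v_0] - E_{\eps}[v_s]\big) = C - \frac{1}{s}\big(E_{\eps}[v_s] - E_{\eps}[v_0]\big).
\end{equation*}
Taking $\liminf_{s \downarrow 0}$ of both sides and using hypothesis \eqref{est:crit_heatflow} shows the right-hand side is finite. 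For the left-hand side, the map $s' \mapsto \int\int \frac{\omega_\eps(y)}{\eps^2}(|\nabla\rho_{s'}(x) - \nabla\rho_{s'}(x-y)|^2 + \cdots)$ is lower semicontinuous as $s' \to 0$ (by Fatou, since $\nabla\rho_{s'} \to \nabla\rho_0$ in the sense of distributions and the integrand is a convex nonnegative functional of the gradient), so $\liminf_{s\downarrow 0}\frac1s\int_0^s(\cdots)\diff s' \ge \int\int\frac{\omega_\eps(y)}{\eps^2}(|\nabla\rho_0(x)-\nabla\rho_0(x-y)|^2 + |\nabla\eta_0(x)-\nabla\eta_0(x-y)|^2)\diff x\diff y$; in particular this quantity is finite, which via the nonlocal Poincaré inequality \eqref{eq:Poincare_grad_f} forces $\nabla\rho_0, \nabla\eta_0 \in L^2$, i.e. $v_0 \in H^1(\Td)$. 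Combining the two bounds yields \eqref{est:heatflow}.

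\textbf{Main obstacle.} The delicate point is the rigorous justification of the dissipation identity and the lower semicontinuity when passing $s \downarrow 0$ — a priori we only know $v_0 \in L^2$, so $\nabla v_0$ exists only as a distribution and the nonlocal Dirichlet-type form in \eqref{est:heatflow} must be interpreted via Proposition \ref{prop:ponce} / the characterization of $H^1$ by finiteness of these difference quotients. I expect to handle this exactly as in \cite{MR2921215, kroemer-laux, elbar-skrzeczkowski}: prove the integrated inequality on $[s_1, s_2]$ with $s_1 > 0$ where everything is smooth, then carefully take $s_1 \to 0$ using continuity of $s \mapsto E_{\eps}[v_s]$ in $L^2$ on $(0,\infty)$ together with $L^2$-continuity of the heat semigroup at $0$, and Fatou's lemma for the difference-quotient functional. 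A secondary technical point is that Proposition \ref{est:positive_energy_entropy} must be applied to $v_s$ which is smooth and, for $s$ small, close to $v_0$ in $L^2$ — but since the inequality there holds for all $\rho, \eta \in \Pro$ (or rather all $H^1$ functions with the right mass), this is immediate once we know $v_s \in H^1$, which it is for $s > 0$.
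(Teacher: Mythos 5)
Your proposal is correct and follows essentially the same route as the paper's proof: differentiate $E_{\eps}$ along the heat flow (smooth for positive times) to recognize the entropy dissipation, control its sign via Proposition \ref{est:positive_energy_entropy}, integrate in time, use hypothesis \eqref{est:crit_heatflow} together with the $L^2$-continuity of the heat semigroup at $t=0$ (hence continuity of $E_{\eps}$ for fixed $\eps$), and pass to the limit by lower semicontinuity. The only minor variation is the final step: the paper first identifies $v_0\in H^{1}(\Td)$ by taking weak limits of the time-averaged gradients and then splits the nonlocal form into $2\norm{\nabla\rho}_{L^2}^2$ (weakly l.s.c.) plus a convolution cross term handled by weak--strong convergence, whereas you invoke lower semicontinuity of the nonlocal quadratic form under distributional convergence (your appeal to ``Fatou'' should really be weak l.s.c. of the convex form for fixed $y$, then Fatou in $y$) combined with Lemma \ref{lem:poincare_with_parameter} applied along the flow; both implementations are routine.
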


The plan is to initiate the heat flow at the solutions to JKO scheme. If \eqref{est:crit_heatflow} is verified, then \eqref{est:heatflow} will provide the desired $H^1$ estimate by Lemma \ref{lem:poincare_with_parameter}. The crucial information here is that dissipation of Cahn-Hilliard energy along heat flow is related to the dissipation of Cahn-Hilliard entropy. The technical assumption \eqref{est:crit_heatflow} will be verified with the flow interchange lemma which requires a definition of a $\lambda$-flow in $\Pro^{2}$. 

\begin{Def}
Let $\F:\Pro^{2}\to (-\infty,+\infty]$ be a proper lower semi-continuous functional and $\lambda\in\R$. A continuous semigroup $\bS^{t}:\Dom(\F)\to\Dom(\F)$, $t\ge 0$ is a $\lambda$-flow for $\F$ if it satisfies the Evolution Variational Inequality (EVI)
\begin{equation}\label{eq:EVI}
\f{1}{2}\limsup_{h\downarrow 0}\left[\f{\W^{2}_{2}(\bS^{h}u,v)-\W^{2}_{2}(u,v)}{h}\right]+\f{\lambda}{2}\W^{2}_{2}(u,v)+\F(u)\le \F(v),    
\end{equation}
for all measures $u,v\in\Dom(\F)$ with $\W_{2}(u,v)<+\infty$. 
\end{Def}

\begin{lem}[Flow interchange Lemma \cite{MR2581977,MR2921215}]\label{lem.flowinterchange}
  Assume that $\bS_\F$ is a $\lambda$-flow for the proper, lower
  semi-continuous functional $\F$ in $\Pro^{2}$ and
  let $u^n_\tau$ be a $n$-th step approximation constructed by the minimizing movement scheme \eqref{eq:JKO_scheme}.
 If $u^n_\tau\in \Dom(\F)$ then
  \begin{equation}
    \label{eq.flowinterchange}
    \F[u^n_\tau] - \F[u^{n-1}_\tau]
    \leq \tau \liminf_{h\downarrow0} \left(\frac{E_{\eps}[\bS_\F^h(u^n_\tau)]- E_{\eps}[u^n_\tau]}{h}\right)
    - \frac\lambda2 \W^{2}_{2}\big(u^n_\tau,u^{n-1}_\tau\big).
  \end{equation}
\end{lem}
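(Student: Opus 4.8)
The plan is to exploit the minimality of $u^n_\tau$ in the scheme \eqref{eq:JKO_scheme} by testing it against the competitor produced by the $\lambda$-flow. Since $u^n_\tau \in \Dom(\F)$ and $\bS_\F$ maps $\Dom(\F)$ into itself, the curve $h \mapsto \bS_\F^h(u^n_\tau)$ stays in $\Pro^{2}$ and is therefore admissible in \eqref{eq:JKO_scheme}. Recalling that $u^n_\tau$ is by construction the minimizer of $u \mapsto \f{1}{2\tau}\W_2^2(u^{n-1}_\tau, u) + E_\eps[u]$, I would insert $u = \bS_\F^h(u^n_\tau)$, rearrange, and divide by $h>0$ to obtain, for every $h > 0$, the pointwise difference-quotient inequality
\begin{equation*}
\f{E_\eps[\bS_\F^h(u^n_\tau)] - E_\eps[u^n_\tau]}{h} \ge -\f{1}{2\tau}\cdot\f{\W_2^2(u^{n-1}_\tau, \bS_\F^h(u^n_\tau)) - \W_2^2(u^{n-1}_\tau, u^n_\tau)}{h}.
\end{equation*}

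Next I would pass to the limit $h \downarrow 0$ and feed the right-hand side into the EVI \eqref{eq:EVI}. Writing $A_h$ for the left-hand difference quotient of $E_\eps$ along the flow and $B_h := \f{1}{2\tau}$ times the Wasserstein difference quotient, the above reads $A_h \ge -B_h$; taking $\liminf_{h\downarrow 0}$ converts this into $\liminf_h A_h \ge -\limsup_h B_h$. On the other hand, applying the Evolution Variational Inequality \eqref{eq:EVI} defining the $\lambda$-flow with $u = u^n_\tau$ and $v = u^{n-1}_\tau$ (admissible because $\W_2(u^n_\tau, u^{n-1}_\tau) < +\infty$ on the compact torus) controls precisely that same limsup:
\begin{equation*}
\tau \limsup_{h\downarrow 0} B_h + \f{\lambda}{2}\W_2^2(u^n_\tau, u^{n-1}_\tau) + \F(u^n_\tau) \le \F(u^{n-1}_\tau).
\end{equation*}
Chaining the two estimates gives $-\tau\liminf_h A_h \le \tau\limsup_h B_h \le \F(u^{n-1}_\tau) - \F(u^n_\tau) - \f{\lambda}{2}\W_2^2(u^n_\tau, u^{n-1}_\tau)$, and rearranging reproduces exactly \eqref{eq.flowinterchange} with $A_h = \f{E_\eps[\bS_\F^h(u^n_\tau)]-E_\eps[u^n_\tau]}{h}$.

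The hard part will be the bookkeeping between $\liminf$ and $\limsup$: the minimality inequality is one-sided, bounding the energy quotient $A_h$ from \emph{below}, so the natural operation is $\liminf$ on $A_h$, which flips into $-\limsup$ on $B_h$; it is then essential that the EVI \eqref{eq:EVI} is stated with a $\limsup$ on the very same Wasserstein quotient, so the two bounds compose without any gap. I would also record the minor admissibility remarks: if $E_\eps[\bS_\F^h(u^n_\tau)] = +\infty$ the minimality inequality is vacuous but then $A_h = +\infty$ and the conclusion is trivially true, so we may assume finiteness; and all $\W_2$-distances on $\Td$ are finite, so \eqref{eq:EVI} always applies. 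No additional regularity of the semigroup is required, since the argument uses $\bS_\F$ only through the one-sided inequality \eqref{eq:EVI}.
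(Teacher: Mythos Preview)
Your proposal is correct and matches the paper's approach exactly. The paper does not spell out a detailed proof but only remarks (following the cited references) that the lemma is ``just a combination of scheme \eqref{eq:JKO_scheme} and the definition of $\lambda$-flow \eqref{eq:EVI}''; your argument fills in precisely this combination, with the correct $\liminf$/$\limsup$ bookkeeping.
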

\begin{rem}
In \cite[Lemma 3.2]{MR2581977} Lemma \ref{lem.flowinterchange} was formulated for the case of a bounded domain. However, it also works in the case of torus as it is just a combination of scheme \eqref{eq:JKO_scheme} and the definition of $\lambda$-flow \eqref{eq:EVI}. 
\end{rem}

We are concerned with the functional
\begin{equation*}
\mathcal{U}[\rho,\eta]=\int_{\Td}\rho(x)\log\rho(x)+\eta(x)\log\eta(x)\diff x,    
\end{equation*}
which generates a gradient flow in the product space $(\Pro^{2},\W_{2})$ defined by \eqref{eq:heatflow}. We admit the fairly classical result that the entropy $\mathcal{U}$ possesses a 0-flow given by the heat semigroup.

\begin{proof}[Proof of Lemma \ref{lem:heatflow}]
We recall that $E_{\eps}$ can be written as in~\eqref{eq:intro_energy}
\begin{equation*}
\begin{split}
 E_{\varepsilon}[\rho,\eta]=&\f{1}{4\eps^{2}}\int_{\Td}\int_{\Td}\omega_{\eps}(y)(\kappa|\rho(x)-\rho(x-y)|^{2}+|\eta(x)-\eta(x-y)|^{2})\diff x\diff y  \\ &+\f{\alpha}{2\eps^{2}}\int_{\Td}\int_{\Td}\omega_{\eps}(y)(\rho(x)-\rho(x-y))(\eta(x)-\eta(x-y))\diff x\diff y \\ &-\int_{\Td}\f{\gamma}{2}\rho^{2}+\f{1}{2}\eta^{2}+\beta\rho\eta\diff x.
 \end{split}
 \end{equation*}
From parabolic theory, we know that the solution of~\eqref{eq:heatflow} is smooth for $t>0$ and thus we have after integration by parts
\begin{equation*}
\begin{split}
\f{d}{dt}E_{\eps}[\rho(t,\cdot),v(t,\cdot)]=&-\f{1}{2\eps^{2}}\int_{\Td}\int_{\Td}\omega_{\eps}(y)(\kappa|\nabla\rho(x)-\nabla\rho(x-y)|^{2}+|\nabla\eta(x)-\nabla\eta(x-y)|^{2})\diff x\diff y  \\ &-\f{\alpha}{\eps^{2}}\int_{\Td}\int_{\Td}\omega_{\eps}(y)(\nabla\rho(x)-\nabla\rho(x-y))\cdot(\nabla\eta(x)-\nabla\eta(x-y))\diff x\diff y \\ &+ \int_{\Td}\gamma|\nabla\rho|^{2}+|\nabla\eta|^{2}+2\beta\nabla\rho\cdot\nabla\eta\diff x.
\end{split}
\end{equation*}

Note that we recognize the dissipation of the entropy~\eqref{eq:intro_entropy}. Since the map $t\to v(t,\cdot)$ is continuous in $L^{2}(\Td)^{2}$, we get that the map $t\to E_{\eps}[v(t,\cdot)]$ is continous at $t=0$. By~\eqref{est:crit_heatflow} and Proposition~\ref{est:positive_energy_entropy} there exists $C$ such that for all $t\le t_{0}$ for $t_{0}$ sufficiently small,
\begin{equation}\label{eq:estimate_energy_along_heat_flow}
\begin{split}
&-C \leq \f{E_{\eps}[v(t,\cdot)]-E_{\eps}[v_0]}{t} = \f{1}{t} \int_0^1 \f{d}{ds} E_{\eps}[v(s\,t,\cdot)] \diff s 
\leq \\ 
&\leq -\f{C}{2\eps^{2}}\int_0^1 \int_{\Td}\int_{\Td}\omega_{\eps}(y)(|\nabla\rho(st,x)-\nabla\rho(st,x-y)|^{2}+|\nabla\eta(st,x)-\nabla\eta(st,x-y)|^{2})\diff x\diff y \diff s.
\end{split}
\end{equation}
Therefore, using Lemma~\ref{lem:poincare_with_parameter}, the family $\{\nabla v(t\cdot,\cdot)\}_{t\le t_{0}}$ is bounded in $L^{2}((0,1)\times \Td)$. Now, as $v_0 \in L^2(\Td)$ and $v$ is the solution of heat equation with initial condition $v_0$, $v \in C([0,T]; L^2(\Td))$. Therefore, there exists a bounded modulus of continuity $\pi:[0,T]\to \R^+$ such that $\lim_{t\to 0} \pi(t) = 0$ and
$$
\| v(t,\cdot) - v_0 \|_{L^2(\Td)} \leq \pi(t).
$$
In particular,
$$
\| v(st,\cdot) - v_0 \|_{L^2(\Td)} \leq \pi(st)
$$
so that dominated convergence theorem implies that $v(t\cdot, \cdot) \to v_0$ in $L^2((0,1)\times\Td)$ when $t \to 0$.
Finally, choosing a subsequence such that $\nabla v(t\cdot, \cdot) \weak \xi$ in $L^2((0,1)\times\Td)$, using indentity
$$
\int_0^1\int_{\Td} v(ts,x)\, \mbox{div} \varphi(s,x) \diff s \diff x = - \int_0^1\int_{\Td} \nabla v(ts,x)\, \varphi(s,x) \diff s \diff x
$$
for a smooth test function $\varphi$ and passing to the limit $t \to 0$, we obtain that $\nabla v_0 = \xi$ and so, $v_0 \in H^1(\Td)$. \\

To obtain uniform estimate on $v_0$ in $H^1(\Td)$ we want to pass to the limit $t\to 0$ in \eqref{eq:estimate_energy_along_heat_flow}. For this, we observe that
\begin{multline*}
\int_0^1\int_{\Td}\int_{\Td}\omega_{\eps}(y)|\nabla\rho(st,x)-\nabla\rho(st,x-y)|^{2}\diff x\diff y \diff s= \\ 
=2\int_0^1\int_{\Td}|\nabla\rho(st,x)|^{2}\diff x \diff s-2\int_0^1\int_{\Td}\nabla\rho(st,x)\cdot\nabla(\omega_{\eps}\ast\rho(st,x))\diff x \diff s.
\end{multline*}
The first term is lower semi-continuous with respect to the $L^{2}$ weak topology while the second converges to 
$$
\int_0^1\int_{\Td}\nabla\rho(st,x)\cdot\nabla(\omega_{\eps}\ast\rho(st,x))\diff x \diff s \to \int_{\Td}\nabla\rho_0(x)\cdot\nabla(\omega_{\eps}\ast\rho_0(x))\diff x
$$
as a product of a weakly and strongly convergent sequences. 

\end{proof}

With this lemma, we are finally able to prove 

\begin{proof}[Proof of Proposition \ref{prop:conv_subsequence}]
First, we want to apply the flow interchange lemma to the functional $\mathcal{U}$. This is possible since $\mathcal{U}$ is bounded from below on $\Pro$. Applying Lemma~\ref{lem.flowinterchange} with $\F=\mathcal{U}$ and Lemma~\ref{lem:heatflow} with $v_0=u_{\tau}^{n}$
\begin{equation*}
C\tau\int_{\Td}\int_{\Td}\f{\omega_{\eps}(y)}{\eps^{2}}(|\nabla\rho_{\tau}^{n}(x)-\nabla\rho_{\tau}^n(x-y)|^{2}+|\nabla\eta_{\tau}^{n}(x)-\nabla\eta_{\tau}^{n}(x-y)|^{2})\diff x\diff y\le \mathcal{U}[u_{\tau}^{n-1}]-\mathcal{U}[u_{\tau}^{n}].
\end{equation*}
Summing from $n=1$ to $n=N$ we obtain 
\begin{equation*}
\int_{0}^{T}\int_{\Td}\int_{\Td}\f{\omega_{\eps}(y)}{\eps^2}(|\nabla\rho_{\tau}(t,x)-\nabla\rho_{\tau}(t,x-y)|^{2}+|\nabla\eta_{\tau}(t,x)-\nabla\eta_{\tau}(t,x-y)|^{2})\diff x\diff y\diff t\le CT.   
\end{equation*}
From Lemma~\ref{lem:poincare_with_parameter} we obtain uniform estimate in $L^{2}(0,T;H^{1}(\Td))$ and so, weak compactness in this space.\\

It remains to prove the strong compactness in $L^{2}(0,T;L^{2}(\Td))$. First, we want to apply Theorem \ref{thm:general_lions_aubin} with Banach space $X = L^2(\Td) \times L^2(\Td)$, set $U = \{ u_{\tau}\}_{\tau > 0}$, pseudometric 
$
g(u_1,u_2)= \mathcal{W}_2^2(u_1,u_2)$ (extended to $+\infty$ in case $u_1$ or $u_2$ are not probability measures) and functional $\mathcal{F}$ defined as 
$$
\mathcal{F}(u) = \begin{cases}
\|u\|^2_{H^1(\Td)} &\mbox{ if } u \in H^1(\Td) \times H^1(\Td) \cap \mathcal{P}(\Td) \times \mathcal{P}(\Td), \\
+ \infty &\mbox{ if } u \notin H^1(\Td) \times H^1(\Td) \cap \mathcal{P}(\Td) \times \mathcal{P}(\Td).
\end{cases}
$$
We can do this as $\mathcal{F}$ is lower semicontinuous and its level sets are compact in $L^2(\Td)$ by Reillich-Kondrachov theorem. Furthermore, $g$ is lower semicontinuous \cite[Chapter 6]{MR2459454}. Finally, \eqref{eq:gen_lions_aubin_ass} follows from uniform estimates in $L^2(0,T; H^1(\Td))$ and estimate \eqref{eq:prop_minimizers_summed}.  \\

Therefore, Theorem \ref{eq:gen_lions_aubin_ass} gives us a subsequence (not relabelled) such that
$$
\| u_n(t,\cdot) - u(t,\cdot) \|_{L^2(\Td)}^2 \to 0 \mbox{ for a.e. } t\in [0,T].
$$
As sequence $\{u_n\}$ is bounded in $L^{\infty}(0,T; L^2(\Td))$, the conclusion follows by dominated convergence theorem.
\noindent 
\end{proof}

\subsection{Weak formulation}

To derive the weak formulation, we follow \cite[Section 8.4.2]{MR1964483}. Since~\eqref{eq:JKO_scheme} was derived from the Lagrangian point of view, the idea is to investigate its first variation and prove that it is a time-discrete scheme of~\eqref{eq:CHNLW1}. For this, we introduce a suitable perturbation of $u_{\tau}^{n+1}=(\rho_{\tau}^{n+1},\eta_{\tau}^{n+1})$ as follows: let $\xi$ be a smooth periodic vector field and $T_{\delta}=\Id+\delta\xi$. It is classical to prove that for $\delta$ small enough, $T_\delta$ is a $C^{1}$ diffeomorphism and $\det(\nabla T_\delta)>0$.  Then, we define
\begin{equation*}
\tilde{u}_{\delta}=(T_{\delta}\#\rho_{\tau}^{n+1},\eta_{\tau}^{n+1}). 
\end{equation*}
Note carefully that only the first component was perturbed. This will result in the first equation in Definition~\ref{def:weak_sol_2}. To obtain the second one, it is sufficient to introduce a similar perturbation on the second component of $u_{\tau}^{n+1}$. This results in analogous computations as outlined below and we do not repeat them.\\

Using standard properties of push-forward measure we obtain from \eqref{eq:energy_weak}
\begin{equation}\label{eq:energy_pert}
\begin{split}
E_{\eps}[\tilde{u}_{\delta}]=&
\int_{\Td}\f{\tilde{\kappa}}{2}\f{(\rho_{\tau}^{n+1}(x))^{2}}{\det(\Id+\delta\nabla\xi)}+\f{\tilde{c}}{2}(\eta_{\tau}^{n+1}(x))^{2}+\tilde{\alpha}\rho_{\tau}^{n+1}(x)\,\eta(x+\delta\xi)\diff x\\
&-\f{\tilde{\kappa}+\gamma}{2}\int_{\Td}\int_{\Td}\omega_{\eps}(x-y+\delta(\xi(x)-\xi(y)))\rho_{\tau}^{n+1}(y)\rho_{\tau}^{n+1}(x)\diff y\diff x\\
&-(\tilde{\alpha}+\beta)\int_{\Td}\int_{\Td}\omega_{\eps}(x+\delta\xi(x)-y)\eta_{\tau}^{n+1}(y)\rho_{\tau}^{n+1}(x)\diff y\diff x\\
&-\f{\tilde{c}+1}{2}\int_{\Td}\omega_{\eps}\ast\eta_{\tau}^{n+1}(x) \eta_{\tau}^{n+1}(x) \diff x.
\end{split}
\end{equation}

Using the minimizing property of $u_{\tau}^{n+1}$ in~\eqref{eq:JKO_scheme} gives
\begin{equation}\label{eq_JKO_ineq_with_perturbation}
0\le\f{1}{2\tau}[\W^{2}_{2}(u_{\tau}^{n},\tilde{u}_{\delta})-\W_{2}^{2}(u_{\tau}^{n},u_{\tau}^{n+1})]+ E_{\eps}[\tilde{u}_{\delta}]-E_{\eps}[u_{\tau}^{n+1}].     
\end{equation}

The plan is to expand this inequality in terms of $\delta$, send $\delta \to 0$ and then $\tau \to 0$ which will provide the weak formulation in Definition~\ref{def:weak_sol_2}. We consider three types of terms separately.

\underline{\textit{Step 1: the nonlocal terms in $E_{\eps}[\tilde{u}_{\delta}]-E_{\eps}[u_{\tau}^{n+1}]$.}}
When taking the difference $E_{\eps}[\tilde{u}_{\delta}]-E_{\eps}[u_{\tau}^{n+1}]$, there are two types of nonlocal terms. The first one reads 
\begin{equation*}
-\f{\tilde{\kappa}+\gamma}{2}\int_{\Td}\int_{\Td}[\omega_{\eps}(x-y+\delta(\xi(x)-\xi(y)))-\omega_{\eps}(x-y)]\rho_{\tau}^{n+1}(y)\rho_{\tau}^{n+1}(x)\diff y\diff x.
\end{equation*}
Similarly to~\cite[Proof of Theorem 3.3]{MR3761096} we perform Taylor's expansion and using uniform $L^2$ estimates we obtain that this term is equal to 
\begin{equation*}
-\delta\f{\tilde{\kappa}+\gamma}{2}\int_{\Td}\int_{\Td}\nabla\omega_{\eps}(x-y)\cdot(\xi(x)-\xi(y))\rho_{\tau}^{n+1}(y)\rho_{\tau}^{n+1}(x)\diff y\diff x+o(\delta).    
\end{equation*}

The second term comes from the cross-interaction potentials and we obtain similarly
\begin{equation*}
-\delta(\tilde{\alpha}+\beta)\int_{\Td}\int_{\Td}\nabla\omega_{\eps}(x-y)\cdot\xi(x)\eta_{\tau}^{n+1}(y)\rho_{\tau}^{n+1}(x)\diff y\diff x+o(\delta).    
\end{equation*}

\underline{\textit{Step 2: the local terms.}}
As in Step 1, there are only two differences. The first one reads 
\begin{equation*}
\f{\tilde{\kappa}}{2}\int_{\Td}\f{(\rho_{\tau}^{n+1})^{2}}{\det(\Id+\delta\nabla\xi)}-(\rho_{\tau}^{n+1})^{2}\diff x. 
\end{equation*}
Using $\det(\Id+\delta\nabla\xi)=1+\delta\DIV\xi(x)+o(\delta)$, we obtain that this term is equal to
\begin{equation*}
-\delta\f{\tilde{\kappa}}{2}\int_{\Td}(\rho_{\tau}^{n+1})^{2}\DIV\xi(x)\diff x+o(\delta). 
\end{equation*}
The second term reads 
\begin{equation*}
\int_{\Td}\tilde{\alpha}\rho_{\tau}^{n+1}(x)(\eta_{\tau}^{n+1}(x+\delta\xi)-\eta_{\tau}^{n+1}(x))\diff x.    
\end{equation*}
As $\nabla\eta_{\tau}^{n+1} \in L^{2}(\Td)$, the second term is equal to
\begin{equation*}
\delta\tilde{\alpha}\int_{\Td}\rho_{\tau}^{n+1}(x)\nabla\eta_{\tau}^{n+1}(x)\cdot\xi\diff x + o(\delta).     
\end{equation*}

\underline{\textit{Step 3: The Wasserstein terms}}. 
Since $\rho_{\tau}^{n}$ and $\rho_{\tau}^{n+1}$ are absolutely continuous measures, we know that there exists an optimal map $\nabla\varphi$ such that $\nabla\varphi\#\rho_{\tau}^{n}=\rho_{\tau}^{n+1}$ \cite[Theorem 1.25]{MR3409718} and 
\begin{equation}\label{eq:distance_JKO_Brenier}
\W_{2}^{2}(\rho_{\tau}^{n},\rho_{\tau}^{n+1})=\int_{\Td}\rho_{\tau}^{n}(x)|x-\nabla\varphi(x)+k(x)|^{2}\diff x.     
\end{equation}
where $k(x)\in\mathbb{Z}^{d}$. 
Moreover, we have $T_{\delta}\#\rho_{\tau}^{n+1}=[(\Id+\delta\xi)\circ \nabla\varphi]\#\rho_{\tau}^{n}$. Therefore by definition of the Wasserstein distance, and for $\delta$ small enough we have
\begin{equation}\label{eq:distance_JKO_not_opt}
\W_{2}^{2}(\rho_{\tau}^{n},T_{\delta}\#\rho_{\tau}^{n+1})\le\int_{\Td}\rho_{\tau}^{n}(x)\left|x-\nabla\varphi(x)-\delta\xi\circ\nabla\varphi(x)+k(x)\right|^{2}\diff x.     
\end{equation} 
Using \eqref{eq:distance_JKO_Brenier}--\eqref{eq:distance_JKO_not_opt} and performing Taylor's expansion we obtain
\begin{equation*}
\f{1}{2\tau}[\W^{2}_{2}(u_{\tau}^{n},\tilde{u}_{\delta})-\W_{2}^{2}(u_{\tau}^{n},u_{\tau}^{n+1})]\le \f{\delta}{\tau}\int_{\Td}\rho_{\tau}^{n}(x)(\nabla\varphi(x)-x+k(x))\cdot(\xi\circ\nabla\varphi(x))\diff x + o(\delta). 
\end{equation*}

\underline{\textit{Weak formulation.}} We plug inequalities from Steps 1-3 to \eqref{eq_JKO_ineq_with_perturbation}. As $\xi$ can be replaced with $-\xi$ we obtain equality
\begin{equation}\label{eq:weak_form_existence_first}
\begin{split}
\f{1}{\tau}\int_{\Td}\rho_{\tau}^{n}(x)(\nabla\varphi(x)&-x+k(x))\cdot(\xi\circ\nabla\varphi(x))\diff x=\\
&=\f{\tilde{\kappa}}{2}\int_{\Td}(\rho_{\tau}^{n+1}(x))^{2}\DIV\xi(x)\diff x-\tilde{\alpha}\int_{\Td}\rho_{\tau}^{n+1}(x)\nabla\eta_{\tau}^{n+1}(x)\cdot\xi\diff x \\ 
&+(\tilde{\alpha}+\beta)\int_{\Td}\int_{\Td}\nabla\omega_{\eps}(x-y)\cdot\xi(x)\eta_{\tau}^{n+1}(y)\rho_{\tau}^{n+1}(x)\diff y\diff x\\
&+\f{\tilde{\kappa}+\gamma}{2}\int_{\Td}\int_{\Td}\nabla\omega_{\eps}(x-y)\cdot(\xi(x)-\xi(y))\rho_{\tau}^{n+1}(y)\rho_{\tau}^{n+1}(x)\diff y\diff x.
\end{split}    
\end{equation}
Now we consider test function $\xi=\nabla\zeta$ for some $\zeta\in C^{\infty}(\Td)$. By periodicity we have $\zeta(\nabla\varphi(x))-\zeta(x)=\zeta(\nabla\varphi(x))-\zeta(x-k(x))$. Therefore, we have
\begin{equation*}
\zeta(\nabla\varphi(x))-\zeta(x)=(\nabla\varphi(x)-x+k(x))\cdot(\nabla\zeta\circ\nabla\varphi(x))+ O(|x-k(x)-\nabla\varphi(x)|^{2})    
\end{equation*}
and $\rho_{\tau}^{n+1} = \nabla\varphi \# \rho_{\tau}^{n}$, we obtain that the (LHS) of \eqref{eq:weak_form_existence_first} is equal to 
\begin{equation*}
\f{1}{\tau}\left(\int_{\Td}\rho_{\tau}^{n+1}\zeta-\int_{\Td}\rho_{\tau}^{n}\zeta\right)+O\left(\f{\W_{2}^{2}(\rho_{\tau}^{n},\rho_{\tau}^{n+1})}{\tau}\right).
\end{equation*}
Now, let $t_1$, $t_2$ be arbitrary. As the curve $\rho_{\tau}$ is piecewisely constant, we can sum up from $n=n_{1}=[t_{1}/\tau]$ to $n_{2}=[t_{2}/\tau]+1$ and obtain
\begin{equation*}
\begin{split}
\int_{\Td}\rho_{\tau}(t_{2})\zeta\diff &x-\int_{\Td}\rho_{\tau}(t_{1})\zeta\diff x+O\left(\sum_{n=n_{1}}^{n_{2}}\W_{2}^{2}(\rho_{\tau}^{n},\rho_{\tau}^{n+1})\right)= \\
&=\f{\tilde{\kappa}}{2}\int_{t_{1}}^{t_{2}}\int_{\Td}\rho_{\tau}(t)^{2}\Delta\zeta(x)\diff x\diff t -\tilde{\alpha}\int_{t_{1}}^{t_{2}}\int_{\Td}\rho_{\tau}(t,x)\nabla\eta_{\tau}(t,x)\cdot\nabla\zeta\diff x\diff t\\
&\phantom{=}+(\tilde{\alpha}+\beta)\int_{t_{1}}^{t_{2}}\int_{\Td}\int_{\Td}\nabla\omega_{\eps}(x-y)\cdot\nabla\zeta(x)\eta_{\tau}(t,y)\rho_{\tau}(t,x)\diff y\diff x\diff t\\
&\phantom{=}+\f{\tilde{\kappa}+\gamma}{2}\int_{t_{1}}^{t_{2}}\int_{\Td}\int_{\Td}\nabla\omega_{\eps}(x-y)\cdot(\nabla\zeta(x)-\nabla\zeta(y))\rho_{\tau}(t,y)\rho_{\tau}(t,x)\diff y\diff x\diff t+ O(\tau),
\end{split}    
\end{equation*}
where the term $O(\tau)$ appears because each term has at least one term ($\rho_{\tau}$ or $\eta_{\tau}$) which is bounded in $L^{\infty}(0,T; L^2(\Td))$. Using the energy estimate~\eqref{eq:energy_jko2} on the Wasserstein distance, we can incorporate the term $O\left(\sum_{n=n_{1}}^{n_{2}}\W_{2}^{2}(\rho_{\tau}^{n},\rho_{\tau}^{n+1})\right)$ into $O(\tau)$. Sending $\tau\to 0$, using Proposition~\ref{prop:conv_subsequence} and pointwise (in time) weak convergence (in space) from Lemma \ref{lem:compactness_time} yields
\begin{equation}\label{eq:weak_formulation_almost}
\begin{split}
\int_{\Td}\rho(t_{2},x)\zeta(x)\diff &x-\int_{\Td}\rho(t_{1},x)\zeta(x)\diff x=\\
& = \f{\tilde{\kappa}}{2}\int_{t_{1}}^{t_{2}}\int_{\Td}\rho(t,x)^{2}\Delta\zeta(x)\diff x\diff t -\tilde{\alpha}\int_{t_{1}}^{t_{2}}\int_{\Td}\rho(t,x)\nabla\eta(t,x)\cdot\nabla\zeta\diff x\diff t\\
&\phantom{=} + (\tilde{\alpha}+\beta)\int_{t_{1}}^{t_{2}}\int_{\Td}\int_{\Td}\nabla\omega_{\eps}(x-y)\cdot\nabla\zeta(x)\eta(t,y)\rho(t,x)\diff y\diff x\diff t\\
&\phantom{=} +\f{\tilde{\kappa}+\gamma}{2}\int_{t_{1}}^{t_{2}}\int_{\Td}\int_{\Td}\nabla\omega_{\eps}(x-y)\cdot(\nabla\zeta(x)-\nabla\zeta(y))\rho(t,y)\rho(t,x)\diff y\diff x\diff t.
\end{split}    
\end{equation}
Performing integration by parts we obtain
$$
\f{\tilde{\kappa}}{2}\int_{t_{1}}^{t_{2}}\int_{\Td}\rho(t,x)^{2}\Delta\zeta(x)\diff x\diff t = 
\tilde{\kappa} \int_{t_{1}}^{t_{2}}\int_{\Td} \rho(t,x) \nabla \rho(t,x) \cdot \nabla \zeta(x) \diff x\diff t,
$$
while changing variables we obtain
\begin{multline*}
\f{\tilde{\kappa}+\gamma}{2} \int_{t_{1}}^{t_{2}}\int_{\Td}\int_{\Td}\nabla\omega_{\eps}(x-y)\cdot(\nabla\zeta(x)-\nabla\zeta(y))\rho(t,y)\rho(t,x)\diff y\diff x\diff t = \\ 
=
-(\tilde{\kappa}+\gamma)\, 
\int_{t_{1}}^{t_{2}}\int_{\Td}
\omega_{\eps} \ast \rho (t,x) \cdot \nabla \zeta(x)
\diff x\diff t.
\end{multline*}
Having these two observations in mind, we obtain the weak formulation with test function $\zeta(x)$ depending only on $x$. The general weak formulation with test functions depending on $t$ and $x$ as in Definition \ref{def:weak_sol_2} follows from multiplying with $\partial_t \psi(t)$, integrating in time and using the classical density of functions of the form $\psi(t)\,\zeta(x)$ over the set of test functions $\varphi(t,x)$ (see \cite[Theorem D.5]{MR4309603}).

\underline{Regularity estimates \eqref{eq:thm_regularity_compactness_rho_eta}--\eqref{eq:thm_regularity_compactness_nabla_rho_eta}.} Clearly, it is sufficient to prove these estimates for the first component of $u = (\rho,\eta)$. For \eqref{eq:thm_regularity_compactness_rho_eta} we note that from Lemma \ref{lem:compactness_time} we have that $u_{\tau}(t,\cdot) \weak u(t,\cdot)$ for all fixed $t \in [0,T]$. Moreover, we can write the quantity of interest as
$$
\int_{\Td}\int_{\Td} {\omega_{\eps}(y)}|\rho_\tau(t,x)-\rho_\tau(t,x-y)|^{2}\diff x\diff y = 2 \int_{\Td} |\rho_{\tau}(t,x)|^2 \diff x - 2 \int_{\Td} \rho_{\tau}(t,x)\, \omega_{\eps} \ast \rho_{\tau}(t,x) \diff x
$$
so that the first term is weakly lower semicontinuous while the second is the product of weakly and strongly converging sequences. Applying $\liminf_{\eps \to 0}$ to \eqref{eq:energy_term_approx_curve} we deduce \eqref{eq:thm_regularity_compactness_rho_eta}. Concerning \eqref{eq:thm_regularity_compactness_nabla_rho_eta}, the proof is carried out in the similar way: this time we use strong compactness of $\rho_{\tau}$ and weak compactness of $\nabla \rho_{\tau}$ in $L^2((0,T)\times \Td)$ from Lemma \ref{prop:conv_subsequence} which allow to handle integral with respect to time.

\section{Limit $\eps\to 0$}\label{sec:conv_eps}

We want to send $\eps \to 0$ and obtain convergence of weak solutions of the nonlocal Cahn-Hilliard system~\eqref{eq:CHNL1}-\eqref{eq:CHNL2} to weak solutions of the local version of the Cahn-Hilliard system. We define weak solutions of the latter.

\begin{Def}\label{def:weak_sol_local}
We say that $u=(\rho(\cdot),\eta(\cdot)):[0,\infty)\to \Pro^{2}$ is a weak solution of~\eqref{eq:CHL1}-\eqref{eq:CHL2} with initial condition $(\rho_{0},\eta_{0})$ if $\rho,\eta\in L^{2}(0,T;H^{2}(\Td))$ for all $T>0$, and if for all  $\varphi,\phi\in C_{c}^{\infty}([0,+\infty)\times\Td)$ we have
\begin{equation*}
\begin{split}
&-\int_{0}^{\infty}\int_{\Td}\rho\p_{t}\varphi\diff x\diff t-\int_{\Td}\rho_{0}\varphi(0,x)\diff x= -\kappa\int_{0}^{\infty}\int_{\Td}\Delta\rho\nabla \rho\cdot\nabla\varphi\diff x\diff t\\
&-\kappa\int_{0}^{\infty}\int_{\Td}\rho\Delta\rho\Delta\varphi\diff x\diff t- \alpha\int_{0}^{\infty}\int_{\Td}D^{2}\eta:(\nabla\rho\otimes\nabla\varphi+ \rho D^{2}\varphi) \diff x\diff t -\int_{0}^{\infty}\int_{\Td}\rho(\gamma\nabla\rho-\beta\nabla\eta)\cdot\nabla\varphi\diff x\diff t,
\end{split}    
\end{equation*}
\begin{equation*}
\begin{split}
&-\int_{0}^{\infty}\int_{\Td}\eta\p_{t}\phi\diff x\diff t-\int_{\Td}\eta_{0}\phi(0,x)\diff x=-\int_{0}^{\infty}\int_{\Td}\Delta\eta\nabla \eta\cdot\nabla\phi\diff x\diff t\\
&-\int_{0}^{\infty}\int_{\Td}\eta\Delta\eta\Delta\phi\diff x\diff t- \alpha\int_{0}^{\infty}\int_{\Td}D^{2}\rho:(\nabla\eta\otimes\nabla\phi+ \eta D^{2}\phi) \diff x\diff t -\int_{0}^{\infty}\int_{\Td}\eta(\nabla\eta-\beta\nabla\rho)\cdot\nabla\phi\diff x\diff t.
\end{split}    
\end{equation*}
\end{Def}

As we will see (Lemma \ref{lem:estimates_uniform_eps}), we have bounds at most on the gradient of $\nabla \rho_{\eps},\nabla\eta_{\eps}$, and the limit equation has four derivatives. That means we need to mimic at the epsilon level integration by parts for nonlocal operators. For that purpose, we define the operator 
\begin{equation}\label{eq:nonlocal_gradient}
S_{\eps}[\varphi](x,y):=\f{\sqrt{\omega_{\eps}(y)}}{\sqrt{2}\eps}(\varphi(x-y)-\varphi(x)) 
\end{equation}
which has the following properties, see~\cite[Lemma 3.4]{elbar-skrzeczkowski}:
\begin{lem}\label{lem:S_properties} The operator $S_{\eps}$ satisfies:
\begin{enumerate}[label=(S\arabic*)]
    \item $S_{\eps}$ is a linear operator that commutes with derivatives with respect to $x$,
    \item\label{propS_product_rule} for all functions $f,g: \Td \to \R$ we have
    \begin{align*}S_{\eps}[fg](x,y)-S_{\eps}[f](x,y)g(x)&-S_{\eps}[g](x,y)f(x)= \\ &= \f{\sqrt{\omega_{\eps}(y)}}{\sqrt{2}\eps}[(f(x-y)-f(x))(g(x-y)-g(x))].
    \end{align*}
    \item\label{propS_nonneg} for all $u, \varphi \in L^2(\Td)$
    $$
    \langle B_{\eps}[u](\cdot),\varphi(\cdot)\rangle_{L^{2}(\Td)}=\langle S_{\eps}[u](\cdot,\cdot),S_{\eps}[\varphi](\cdot,\cdot)\rangle_{L^{2}(\Td \times \Td)}.
    $$
    \item\label{propS:conv_to_energy}
    if $\{u_{\varepsilon}\}$ is strongly compact in $L^2(0,T; H^1(\Td))$ and $\varphi \in L^{\infty}((0,T)\times \Td)$ we have
    $$
    \int_0^T \int_{\Td} \int_{\Td} (S_{\varepsilon}[u_{\varepsilon}])^2 \, \varphi(t,x) \to \int_0^T \int_{\Td} |\nabla u(t,x)|^2 \, \varphi(t,x).
    $$
\end{enumerate}
\end{lem}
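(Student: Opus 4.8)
\emph{Proof idea.} I would establish (S1)--(S3) first, as they are elementary, and then concentrate on (S4). Linearity of $S_\eps$ is immediate from \eqref{eq:nonlocal_gradient}; since $\omega_\eps(y)$ does not depend on $x$, the chain rule applied to $x\mapsto\varphi(x-y)-\varphi(x)$ gives $\partial_{x_i}S_\eps[\varphi]=S_\eps[\partial_{x_i}\varphi]$, which is (S1). Property (S2) reduces, after cancelling the common prefactor $\tfrac{\sqrt{\omega_\eps(y)}}{\sqrt2\,\eps}$, to the purely algebraic identity $f(x-y)g(x-y)-f(x)g(x)-(f(x-y)-f(x))g(x)-(g(x-y)-g(x))f(x)=(f(x-y)-f(x))(g(x-y)-g(x))$. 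For (S3) I would expand $\langle S_\eps[u],S_\eps[\varphi]\rangle_{L^2(\Td\times\Td)}=\tfrac{1}{2\eps^2}\int_{\Td}\int_{\Td}\omega_\eps(y)\bigl(u(x-y)-u(x)\bigr)\bigl(\varphi(x-y)-\varphi(x)\bigr)\diff x\diff y$ into four products and use translation invariance of $\int_{\Td}\cdot\,\diff x$ together with the radial symmetry $\omega_\eps(-y)=\omega_\eps(y)$ to identify $\int_{\Td}\int_{\Td}\omega_\eps(y)u(x)\varphi(x-y)\diff x\diff y=\int_{\Td}\int_{\Td}\omega_\eps(y)u(x-y)\varphi(x)\diff x\diff y$ and $\int_{\Td}\int_{\Td}\omega_\eps(y)u(x-y)\varphi(x-y)\diff x\diff y=\int_{\Td}u\varphi\diff x$; collecting terms and using $\int\omega_\eps=1$ leaves precisely $\tfrac{1}{\eps^2}\int_{\Td}\int_{\Td}\omega_\eps(y)(u(x)-u(x-y))\varphi(x)\diff x\diff y=\langle B_\eps[u],\varphi\rangle_{L^2(\Td)}$.

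The substantive point is (S4). The workhorse is the uniform bound $\|S_\eps[v]\|_{L^2(\Td\times\Td)}^2=\tfrac{1}{2\eps^2}\int_{\Td}\int_{\Td}\omega_\eps(y)|v(x-y)-v(x)|^2\diff x\diff y\le C\|\nabla v\|_{L^2(\Td)}^2$, obtained by writing $v(x-y)-v(x)=-\int_0^1\nabla v(x-sy)\cdot y\,\diff s$, applying Jensen and Fubini, and invoking $\tfrac{1}{\eps^2}\int_{\Td}\omega_\eps(y)|y|^2\diff y\le C$ from the second-moment condition in \eqref{as:omega}. Given this, I would proceed in two steps. First, reduce to a fixed limit: by linearity, $(S_\eps[u_\eps])^2=(S_\eps[u])^2+2\,S_\eps[u]\,S_\eps[u_\eps-u]+(S_\eps[u_\eps-u])^2$, and after integrating against $\varphi$ the last two terms are bounded, via Cauchy--Schwarz in $L^2((0,T)\times\Td\times\Td)$ and the uniform bound, by $C\|\varphi\|_\infty\bigl(\|\nabla u\|_{L^2}+\|\nabla(u_\eps-u)\|_{L^2}\bigr)\|\nabla(u_\eps-u)\|_{L^2((0,T)\times\Td)}$, which tends to $0$ by the strong $L^2(0,T;H^1)$ convergence of $u_\eps$. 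So it suffices to show, for fixed $u\in L^2(0,T;H^1(\Td))$, that $g_\eps(t,x):=\int_{\Td}\tfrac{\omega_\eps(y)}{2\eps^2}|u(t,x-y)-u(t,x)|^2\diff y$ converges to $|\nabla u|^2$ in $L^1((0,T)\times\Td)$; testing against $\varphi\in L^\infty$ is then immediate. Second, establish this $L^1$ convergence by density: if $u$ is smooth in $x$, Taylor expansion gives $|u(x-y)-u(x)|^2=|\nabla u(x)\cdot y|^2+O(|y|^3\|u\|_{C^2_x}^2)$ on $\supp\omega_\eps$, the remainder contributes $O(\eps)$ after multiplying by $\tfrac{\omega_\eps(y)}{2\eps^2}$ and integrating, while $\tfrac{1}{2\eps^2}\int_{\Td}\omega_\eps(y)|\nabla u(x)\cdot y|^2\diff y=|\nabla u(x)|^2$ by the moment identities in \eqref{as:omega}; for general $u$, approximate in $L^2(0,T;H^1(\Td))$ by functions smooth in $x$ and control both $\|g_\eps[u]-g_\eps[u_k]\|_{L^1}$ (again by the product-difference/Cauchy--Schwarz trick, uniformly in $\eps$) and $\bigl\||\nabla u|^2-|\nabla u_k|^2\bigr\|_{L^1}$ by $\|\nabla(u-u_k)\|_{L^2}$ times a bounded factor.

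I expect the bookkeeping in this second part to be the main obstacle: one has to pass the limit through the quadratic nonlinearity while $u_\eps$ is only strongly $L^2(0,T;H^1)$-precompact (so not amenable to a pointwise Taylor expansion) and $\varphi$ is merely bounded (so not uniformly continuous). Both difficulties are, however, entirely absorbed by the uniform estimate $\|S_\eps[v]\|_{L^2}\le C\|\nabla v\|_{L^2}$, which is a quantitative Bourgain--Brezis--Mironescu--Ponce-type bound (cf.\ Proposition \ref{prop:ponce}) and is the only genuinely analytic ingredient; everything else is algebra and soft density arguments. Since the statement coincides with \cite[Lemma 3.4]{elbar-skrzeczkowski}, I would also refer the reader there for the detailed computations.
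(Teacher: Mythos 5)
Your reconstruction is correct, and it is worth noting that the paper itself offers no argument for this lemma at all: it simply imports the statement from \cite[Lemma 3.4]{elbar-skrzeczkowski}. Your treatment of (S1)--(S3) (linearity and commutation with $\partial_x$ from the definition, the elementary product identity, and the expansion of $\langle S_\eps[u],S_\eps[\varphi]\rangle_{L^2(\Td\times\Td)}$ using translation invariance and the symmetry of $\omega_\eps$) is exactly right, and your two-step proof of (S4) --- the quantitative bound $\|S_\eps[v]\|_{L^2(\Td\times\Td)}\le C\|\nabla v\|_{L^2(\Td)}$ obtained from $v(x-y)-v(x)=-\int_0^1\nabla v(x-sy)\cdot y\,\diff s$ plus the second moment of $\omega_\eps$, then the reduction to the fixed limit $u$ by expanding the square and Cauchy--Schwarz, and finally a density/Taylor argument for fixed $u$ --- is the standard mechanism and matches in spirit the proof in the cited reference; the uniform $S_\eps$ bound is indeed the only genuinely analytic ingredient. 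One caveat you should make explicit: the constant in your identity $\tfrac{1}{2\eps^2}\int_{\Td}\omega_\eps(y)|\nabla u(x)\cdot y|^2\,\diff y=|\nabla u(x)|^2$ is fixed by the second-moment normalization of $\omega$. With \eqref{as:omega} read literally, $\int y_iy_j\,\omega\,\diff y=\tfrac{2}{d}\delta_{ij}$, this integral equals $\tfrac1d|\nabla u(x)|^2$, so your step (and the lemma as stated, as well as the convergence $B_\eps\to-\Delta$) actually requires $\int y_iy_j\,\omega\,\diff y=2\,\delta_{ij}$. This is an inconsistency in the paper's stated normalization rather than a flaw in your argument, but since you invoke ``the moment identities in \eqref{as:omega}'' you should say which normalization you are using, or carry the resulting dimensional constant through.
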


\subsection{Uniform estimates in $\eps$ and compactness} We collect here estimates for the solutions of \eqref{eq:CHNL1}--\eqref{eq:CHNL2} which are uniform in $\eps>0$.
\begin{lem}\label{lem:estimates_uniform_eps}
Let $(\rho_{\eps},\eta_{\eps})$ be the solution of \eqref{eq:CHNL1}--\eqref{eq:CHNL2} constructed in Theorem \ref{thm:weaksoldelta}. Then, the following sequences are bounded:
\begin{enumerate}
    \item $\{\rho_{\eps}\}$, $\{ \eta_{\eps}\}$ in $L^{\infty}(0,T; L^2(\Omega))$ and $L^2(0,T; H^1(\Omega))$,
    \item $\{\partial_t \rho_{\eps}\}$, $\{\partial_t \eta_{\eps}\}$ in $L^2(0,T; H^{-2 - \frac{d}{2}}(\Omega))$.
\end{enumerate}
Moreover, the sequences $\{\rho_{\eps}\}$, $\{ \eta_{\eps}\}$ are strongly compact in $L^2(0,T; H^1(\Omega))$ and the limits belong to $L^2(0,T; H^2(\Omega))$.
\end{lem}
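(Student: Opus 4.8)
The plan is to turn the uniform nonlocal seminorm bounds \eqref{eq:thm_regularity_compactness_rho_eta}--\eqref{eq:thm_regularity_compactness_nabla_rho_eta} of Theorem~\ref{thm:weaksoldelta} into the asserted Sobolev bounds by the Poincar\'e inequality with parameter (Lemma~\ref{lem:poincare_with_parameter}), to estimate $\partial_t\rho_\eps$, $\partial_t\eta_\eps$ from the weak formulation of Definition~\ref{def:weak_sol_2} using the nonlocal calculus of $S_\eps$ (Lemma~\ref{lem:S_properties}), and then to deduce the compactness from the Aubin--Lions--Simon lemma together with Proposition~\ref{prop:ponce}, following \cite{elbar-skrzeczkowski}. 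The two delicate points are the time-derivative estimate --- where the $\eps^{-2}$ singularity of the Cahn--Hilliard flux must be absorbed through the identity~\ref{propS_nonneg}, which is precisely what forces the (non-sharp) exponent $-2-d/2$ --- and the passage from strong $L^2(0,T;L^2)$ to strong $L^2(0,T;H^1)$ compactness, where the $\eps$-dependent seminorm prevents a direct application of Aubin--Lions and requires interlacing it with the Bourgain--Brezis--Mironescu/Ponce compactness. Everything else is routine.

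\emph{Spatial bounds.} Since $\rho_\eps(t),\eta_\eps(t)\in\Pro$, one has $\|\rho_\eps(t)\|_{L^1(\Td)}=\|\eta_\eps(t)\|_{L^1(\Td)}=1$. Applying \eqref{eq:Poincare_f} with a fixed $\delta$ and invoking \eqref{eq:thm_regularity_compactness_rho_eta} gives $\|\rho_\eps(t)\|_{L^2(\Td)}+\|\eta_\eps(t)\|_{L^2(\Td)}\le C$ uniformly in $t$ and $\eps$, hence the bound in $L^\infty(0,T;L^2(\Td))$; applying \eqref{eq:Poincare_grad_f} to $\rho_\eps(t)$ and $\eta_\eps(t)$ and integrating the resulting inequality in time against \eqref{eq:thm_regularity_compactness_nabla_rho_eta} gives the bound in $L^2(0,T;H^1(\Td))$. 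This is part (1).

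\emph{Time derivative.} Taking in Definition~\ref{def:weak_sol_2} test functions of the form $\psi(t)\,\varphi(x)$, the action of $\partial_t\rho_\eps(t)$ on $\varphi\in C^\infty(\Td)$ equals, for a.e.\ $t$, the quantity $-\kappa\int_{\Td}\rho_\eps\nabla B_\eps[\rho_\eps]\cdot\nabla\varphi-\alpha\int_{\Td}\rho_\eps\nabla B_\eps[\eta_\eps]\cdot\nabla\varphi-\int_{\Td}\rho_\eps(\gamma\nabla\rho_\eps-\beta\nabla\eta_\eps)\cdot\nabla\varphi$. The last term is bounded by $C\|\rho_\eps(t)\|_{L^2}\big(\|\nabla\rho_\eps(t)\|_{L^2}+\|\nabla\eta_\eps(t)\|_{L^2}\big)\|\varphi\|_{W^{1,\infty}}$, which lies in $L^2(0,T)$ by the previous step. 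For the nonlocal terms, using that $B_\eps$ commutes with $\partial_i$ and property~\ref{propS_nonneg}, one writes $\int_{\Td}\rho_\eps\,\partial_iB_\eps[\rho_\eps]\,\partial_i\varphi=\langle S_\eps[\partial_i\rho_\eps],S_\eps[\rho_\eps\,\partial_i\varphi]\rangle_{L^2(\Td\times\Td)}$; expanding $S_\eps[\rho_\eps\,\partial_i\varphi]$ by the product rule~\ref{propS_product_rule}, bounding the three contributions by means of $\|\nabla\varphi\|_{L^\infty}$, $\|D^2\varphi\|_{L^\infty}$, the identity $\int_{\Rd}\f{\omega_\eps(y)}{\eps^2}|y|^2\diff y=2$, \eqref{eq:thm_regularity_compactness_rho_eta} and the $L^\infty(0,T;L^2)$ bound, one obtains $\|S_\eps[\rho_\eps\,\partial_i\varphi]\|_{L^2(\Td\times\Td)}\le C\|\varphi\|_{W^{2,\infty}}$ uniformly in $t$ and $\eps$. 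Since $\|S_\eps[\nabla\rho_\eps(t)]\|_{L^2(\Td\times\Td)}^2=\f12\int_{\Td}\int_{\Td}\f{\omega_\eps(y)}{\eps^2}|\nabla\rho_\eps(t,x)-\nabla\rho_\eps(t,x-y)|^2\diff x\diff y$, Cauchy--Schwarz bounds the nonlocal terms by $C\|S_\eps[\nabla\rho_\eps(t)]\|_{L^2(\Td\times\Td)}\|\varphi\|_{W^{2,\infty}}$ (and similarly with $\eta_\eps$); squaring, integrating in $t$ and using \eqref{eq:thm_regularity_compactness_nabla_rho_eta} gives $\partial_t\rho_\eps$ bounded in $L^2(0,T;(W^{2,\infty}(\Td))^*)$, hence in $L^2(0,T;H^{-2-d/2}(\Td))$ by the Sobolev embedding $H^{2+d/2}(\Td)\hookrightarrow W^{2,\infty}(\Td)$. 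The estimate for $\eta_\eps$ is identical, which proves part (2).

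\emph{Compactness.} From the $L^2(0,T;H^1(\Td))$ bound, the $L^2(0,T;H^{-2-d/2}(\Td))$ bound on the time derivatives, and the chain $H^1(\Td)\hookrightarrow\hookrightarrow L^2(\Td)\hookrightarrow H^{-2-d/2}(\Td)$, the Aubin--Lions--Simon lemma yields a subsequence with $\rho_\eps\to\rho$ and $\eta_\eps\to\eta$ strongly in $L^2(0,T;L^2(\Td))$. To upgrade this to strong convergence in $L^2(0,T;H^1(\Td))$ one proceeds as in \cite{elbar-skrzeczkowski}: $\{\nabla\rho_\eps\}$ is bounded in $L^2(0,T;L^2(\Td))$, satisfies the uniform nonlocal estimate \eqref{eq:thm_regularity_compactness_nabla_rho_eta}, and has $\partial_t\nabla\rho_\eps$ bounded in $L^2(0,T;H^{-3-d/2}(\Td))$; combining Proposition~\ref{prop:ponce} (compactness in the space variable at the level of the nonlocal seminorm) with the Aubin--Lions mechanism in time gives $\nabla\rho_\eps\to\nabla\rho$ strongly in $L^2(0,T;L^2(\Td))$, hence $\rho_\eps\to\rho$ in $L^2(0,T;H^1(\Td))$, and likewise for $\eta_\eps$. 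Finally, passing to the $\liminf$ in \eqref{eq:thm_regularity_compactness_nabla_rho_eta} --- using the lower semicontinuity built into Proposition~\ref{prop:ponce}, equivalently property~\ref{propS:conv_to_energy} applied to $\nabla\rho_\eps$ --- yields $\int_0^T\int_{\Td}|D^2\rho|^2<\infty$, i.e.\ $\rho\in L^2(0,T;H^2(\Td))$, and similarly $\eta\in L^2(0,T;H^2(\Td))$.
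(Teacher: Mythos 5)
Your argument is correct and essentially the paper's own proof: part (1) via Lemma \ref{lem:poincare_with_parameter} combined with \eqref{eq:thm_regularity_compactness_rho_eta}--\eqref{eq:thm_regularity_compactness_nabla_rho_eta}, part (2) by testing the weak formulation and rewriting the nonlocal fluxes through the $S_\eps$-calculus (\ref{propS_nonneg}, \ref{propS_product_rule}), and the strong $L^2(0,T;H^1(\Td))$ compactness plus the $H^2$ regularity of the limit by coupling the Ponce/BBM spatial equicontinuity with the time-derivative bound, which is precisely the mechanism the paper packages as Theorem \ref{thm:ponce_tx} and Lemma \ref{lem:Lions-Aubin_nonlocal}. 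The one blemish is the endpoint claim $H^{2+\frac{d}{2}}(\Td)\hookrightarrow W^{2,\infty}(\Td)$, which fails at the borderline; your estimate really places $\partial_t\rho_\eps,\partial_t\eta_\eps$ in $L^2(0,T;(W^{2,\infty}(\Td))^*)\subset L^2(0,T;H^{-s}(\Td))$ for every $s>2+\frac{d}{2}$, which serves the compactness argument just as well and matches the level of precision of the paper's own bookkeeping at this point.
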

\begin{rem}
We note that we cannot use estimates coming from dissipation of the energy \eqref{eq:energy_diss} which can provide better estimates on time derivatives because this information is lost in the JKO scheme. We proceed with a different approach.
\end{rem}
\begin{proof}[Proof of Lemma \ref{lem:estimates_uniform_eps}] The first estimate follows from \eqref{eq:thm_regularity_compactness_rho_eta}--\eqref{eq:thm_regularity_compactness_nabla_rho_eta} and Lemma \ref{lem:poincare_with_parameter}. To see the second, we consider equation for $\rho_\eps$ and test it with a smooth and compactly supported function $\varphi(t,x)$. We need to control the following terms
$$
\int_{0}^{T}\int_{\Td} \rho_{\eps}\,\nabla B_{\eps}[\rho_{\eps}] \cdot\nabla\varphi\diff x\diff t,
\quad  \int_{0}^{T}\int_{\Td}\rho_{\eps}\, \nabla B_{\eps}[\eta_{\eps}]\cdot\nabla\varphi \diff x\diff t, \quad \int_{0}^{T}\int_{\Td}\rho_{\eps} (\gamma\nabla\rho_{\eps}-\beta\nabla\eta_{\eps})\cdot\nabla\varphi\diff x\diff t.
$$
The first two terms are controlled in the same way so that we focus on the second. Using \ref{propS_nonneg} in Lemma \ref{lem:S_properties} and $\nabla B_{\varepsilon}[\eta_{\eps}] = B_{\varepsilon}[\nabla \eta_{\eps}]$ we have
$$
 \int_{0}^{T}\int_{\Td}\rho_{\eps}\, \nabla B_{\eps}[\eta_{\eps}]\cdot\nabla\varphi \diff x\diff t =
  \int_{0}^{T}\int_{\Td} \int_{\Td}   S_{\eps}[\nabla \eta_{\eps}]\cdot S_{\eps}[\rho_{\eps}\,\nabla\varphi] \diff y \diff x\diff t
$$
which is bounded if $\varphi \in L^2(0,T; H^2(\Td))$. Concerning the third term, it is controlled when $\nabla \varphi \in L^2(0,T; L^{\infty}(\Td))$ which is implied by $\varphi \in L^2(0,T; H^{2+\frac{d}{2}}(\Td))$ by Sobolev embedding. The conclusion follows.\\

\noindent The strong compactness follows from Lemma \ref{lem:Lions-Aubin_nonlocal} and estimates \eqref{eq:thm_regularity_compactness_rho_eta}--\eqref{eq:thm_regularity_compactness_nabla_rho_eta}.

\end{proof}

\subsection{Convergence $\eps\to 0$}\label{subsec:conv_eps}

\begin{proof}[Proof of Theorem~\ref{thm:final}] As Equations~\eqref{eq:CHNL1} and~\eqref{eq:CHNL2} have a similar structure, we focus only on Equation~\eqref{eq:CHNL1}. More precisely, we pass to the limit in the term $\int_{0}^{\infty}\int_{\Td}\DIV(\rho_{\eps}\nabla\mu_{\rho,\eps})\varphi\diff x\diff t$ where $\varphi \in C^3([0,\infty)\times \Td)$. Integrating by parts, we obtain 
\begin{equation}\label{eq:split_for_I1_I2_I3}
    \begin{split}
\int_{0}^{\infty}\int_{\Td}\DIV(&\rho_{\eps}\nabla\mu_{\rho,\eps})\,\varphi\diff x\diff t = -\int_{0}^{\infty}\int_{\Td}\rho_{\eps}\nabla\mu_{\rho,\eps}\cdot\nabla\varphi\diff x\diff t \\ &=\kappa\int_{0}^{\infty}\int_{\Td}B_{\eps}[\rho_{\eps}]\nabla \rho_{\eps}\cdot\nabla\varphi\diff x\diff t
+\kappa\int_{0}^{\infty}\int_{\Td}B_{\eps}[\rho_{\eps}]\rho_{\eps}\Delta\varphi\diff x\diff t\\ &\phantom{= }-\alpha\int_{0}^{\infty}\int_{\Td}\rho_{\eps}B_{\eps}[\nabla\eta_\eps]\cdot\nabla\varphi \diff x\diff t -\int_{0}^{\infty}\int_{\Td}\rho_{\eps}(\gamma\nabla\rho_{\eps}-\beta\nabla\eta_{\eps})\cdot\nabla\varphi\diff x\diff t\\
&=:I_{1}+I_{2}+I_{3}+I_{4}. \phantom{\int_{0}^{\infty}\int_{\Td}}
\end{split}
\end{equation}
Concerning the term $I_4$, its convergence is straightforward because all the sequences $\{\rho_{\eps}\}$, $\{\eta_{\eps}\}$, $\{\nabla \rho_{\eps}\}$ are compact in $L^2((0,T)\times \Td)$. For the passage to the limit in $I_{1}, I_{2}$ we refer to~\cite[Steps 1, 2; Proof of Theorem 1.8]{elbar-skrzeczkowski} (these are exactly the terms that appear for analysis of a single equation). We now prove the convergence of the term $I_{3}$. Due to Lemma \ref{lem:S_properties} \ref{propS_product_rule}, we have (omitting the constant $\alpha$)
\begin{align*}
-I_3&=\int_{0}^{\infty}\int_{\Td}S_{\eps}[\rho_{\eps}]S_{\eps}[\nabla\eta_\eps]\cdot\nabla\varphi \diff x\diff t+\int_{0}^{\infty}\int_{\Td}\rho_{\eps}S_{\eps}[\nabla\eta_\eps]\cdot S_{\eps}[\nabla\varphi] \diff x\diff t+R_{\eps}=:J_{1}+J_{2}+R_{\eps}, 
\end{align*}
where $R_{\eps}$ is defined as
\begin{equation*}
 R_{\eps}= \int_{0}^{\infty}\int_{\Td}\int_{\Td}S_{\eps}[\nabla\eta_\eps]\cdot\f{\sqrt{w_{\eps}(y)}}{\sqrt{2}\eps}[(\nabla\varphi(x-y)-\nabla\varphi(x))\cdot(\rho_{\eps}(x-y)-\rho_{\eps}(x))]\diff x\diff y \diff t.
\end{equation*}

After a change of variables $y\to\f{y}{\eps}$ and using the definition of $\omega_{\eps}$ we obtain
\begin{align}
J_{1}=\f{1}{2}\int_{0}^{\infty}\int_{\Td}\int_{\Td}\omega(y)\f{\rho_{\eps}(x)-\rho_{\eps}(x-\eps y)}{\eps}\f{\nabla\eta_{\eps}(x)-\nabla\eta_{\eps}(x-\eps y)}{\eps}\cdot\nabla\varphi(x)\diff x\diff y\diff t.
\end{align}
From Lemma \ref{lem:diff_quot_strong_conv} and \ref{lem:weak_conv_diff_quot} we obtain strong convergence of $\f{\rho_{\eps}(x)-\rho_{\eps}(x-\eps y)}{\eps} \to \nabla\rho(x)\cdot y$ and weak convergence of $\sqrt{\omega(y)}\,\f{\nabla\eta_{\eps}(x)-\nabla\eta_{\eps}(x-\eps y)}{\eps} \weak \sqrt{\omega(y)}\,D^2 \eta(x) \cdot y$ (both in $L^2((0,T)\times\Td\times\Td)$) so that we easily conclude
$$
J_{1}\to \f{1}{2}\int_{\Td}\omega(y)\int_{0}^{\infty}\int_{\Td}(\nabla\rho(x)\cdot y)\, (D^{2}\eta(x)y) \cdot\nabla\varphi(x)\diff y\diff x\diff t. 
$$

where $D^{2}\eta$ denotes the Hessian matrix $(\p_{ij} \eta)_{i,j}$. 

For $J_{2}$ we similarly write
\begin{align}
J_{2}=\f{1}{2}\int_{0}^{\infty}\int_{\Td}\int_{\Td}\omega(y) \, \rho_{\eps}(x) \, \f{\nabla\eta_{\eps}(x)-\nabla\eta_{\eps}(x-\eps y)}{\eps}\cdot\f{\nabla\varphi(x) - \nabla\varphi(x-\eps y)}{\eps}\diff x\diff y\diff t.
\end{align}
and the same argument as for $J_1$ shows that 
$$
J_{2}\to \f{1}{2}\int_{\Td}\omega(y)\int_{0}^{\infty}\int_{\Td}\rho(x)\, (D^{2}\eta(x)y) \cdot(D^{2}\varphi(x)y)\diff y\diff x\diff t. 
$$

By properties of $\omega$, we obtain 
\begin{equation*}
J_{1}+J_{2}=\int_{0}^{\infty}\int_{\Td}D^{2}\eta:(\nabla\rho\otimes\nabla\varphi+\rho D^{2}\varphi)\diff x\diff t.    
\end{equation*}

It remains to show $R_{\eps} \to 0$. By Cauchy-Schwarz inequality (in time and space) as well as bounds on $S_{\eps}[\nabla\eta_{\eps}]$ from the entropy it remains to prove that
\begin{equation*}
\int_{0}^{\infty}\int_{\Td} \int_{\Td}\f{\omega_{\eps}(y)}{\eps^2}|\rho_{\eps}(x-y)-\rho_{\eps}(x)|^{2}|\nabla\varphi(x-y)-\nabla\varphi(x)|^{2}\diff y \diff x\diff t\to 0.     
\end{equation*}
Using Taylor's expansion we can estimate this integral with
\begin{equation*}
\eps\norm{D^{2}\varphi}_{L^{\infty}} \, \int_{0}^{\infty}\int_{\Td}\int_{\Td}\f{\omega_{\eps}(y)}{\eps^2}|\rho_{\eps}(x-y)-\rho_{\eps}(x)|^{2} \diff y\diff x\diff t 
\end{equation*}
which converges to zero by the bound from the entropy so that $R_{\eps}\to 0$. 
\end{proof}

\appendix

\section{Results on difference quotients}

\begin{lem}\label{lem:diff_quot_strong_conv}
Let $\{\ueps\}$ be a sequence strongly compact in $L^2(0,T; H^1(\Td))$. Then, for fixed $y \in \Td$,
$$
\frac{u_{\eps}(t,x-\eps y) - u_{\eps}(t,x)}{\varepsilon} \to - \nabla u(t,x) \cdot y \mbox{ strongly in } L^2((0,T)\times \Td \times \Td).
$$
\end{lem}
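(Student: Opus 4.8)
The plan is to reduce everything to the fundamental theorem of calculus along segments, and then to exploit two facts: continuity of translations in $L^2$, and translation invariance of the Lebesgue measure on $\Td$. The latter is what makes the $1/\eps$ factor harmless even though $u_\eps$ itself depends on $\eps$.

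First I would fix, once and for all, the subsequence along which $u_\eps \to u$ in $L^2(0,T;H^1(\Td))$ (this exists by the assumed strong compactness, and $u$ is the limit in the statement). Then I would record the elementary identity: for any $v \in L^2(0,T;H^1(\Td))$, any $\eps>0$ and any fixed $y\in\Td$,
$$
\frac{v(t,x-\eps y)-v(t,x)}{\eps} = -\int_0^1 \nabla v(t,x-s\eps y)\cdot y \,\diff s \qquad \text{for a.e. } (t,x),
$$
which holds for smooth $v$ by differentiating $s \mapsto v(t,x-s\eps y)$ and extends to $H^1$ by density. I would apply this twice. Applying it to $v=u$ gives
$$
\frac{u(t,x-\eps y)-u(t,x)}{\eps} + \nabla u(t,x)\cdot y = -\int_0^1\big(\nabla u(t,x-s\eps y)-\nabla u(t,x)\big)\cdot y\,\diff s,
$$
whose $L^2((0,T)\times\Td)$ norm is, by Minkowski's integral inequality, at most $|y|\,\sup_{s\in[0,1]}\|\nabla u(\cdot,\cdot-s\eps y)-\nabla u\|_{L^2((0,T)\times\Td)} \to 0$ as $\eps\to 0$, by continuity of translations in $L^2$.

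Next I would handle the genuine $\eps$-dependence by applying the same identity to $v=u_\eps-u$:
$$
\frac{(u_\eps-u)(t,x-\eps y)-(u_\eps-u)(t,x)}{\eps} = -\int_0^1 \nabla(u_\eps-u)(t,x-s\eps y)\cdot y\,\diff s,
$$
and again by Minkowski its $L^2((0,T)\times\Td)$ norm is at most $|y|\,\sup_{s}\|\nabla(u_\eps-u)(\cdot,\cdot-s\eps y)\|_{L^2((0,T)\times\Td)} = |y|\,\|\nabla(u_\eps-u)\|_{L^2((0,T)\times\Td)}$, where the equality is precisely the translation invariance of the Lebesgue measure on $\Td$; this tends to $0$ because $u_\eps\to u$ in $L^2(0,T;H^1)$. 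Writing
$$
\frac{u_\eps(t,x-\eps y)-u_\eps(t,x)}{\eps} + \nabla u(t,x)\cdot y = \frac{(u_\eps-u)(t,x-\eps y)-(u_\eps-u)(t,x)}{\eps} + \Big[\frac{u(t,x-\eps y)-u(t,x)}{\eps}+\nabla u(t,x)\cdot y\Big]
$$
and combining the two bounds yields the convergence in $L^2((0,T)\times\Td)$ for the fixed $y$. Finally, since all the estimates above are dominated by $C|y|$ with $C$ independent of $\eps$ and $y$, and $\Td$ has finite measure, the dominated convergence theorem upgrades the statement to convergence in $L^2((0,T)\times\Td\times\Td)$, which is the form actually invoked in the proof of Theorem~\ref{thm:final}.

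The only point I would flag as requiring care is the step involving $\tfrac{1}{\eps}(u_\eps-u)$: dividing a vanishing sequence by $\eps$ is a priori dangerous, and the whole argument rests on the observation that the FTC representation turns this difference quotient into an average of translates of $\nabla(u_\eps-u)$, so that translation invariance on the torus collapses it to exactly $\|\nabla(u_\eps-u)\|_{L^2}$. Everything else — the identity, Minkowski, continuity of translations — is routine.
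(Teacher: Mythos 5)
Your proof is correct and follows essentially the same route as the paper: first the convergence in $L^2((0,T)\times\Td)$ for fixed $y$, then the upgrade to $L^2((0,T)\times\Td\times\Td)$ by dominated convergence in $y$ using the uniform bound $C|y|$ coming from $\sup_\eps\|\nabla u_\eps\|_{L^2}$. The only difference is that you spell out, via the fundamental theorem of calculus applied to $u$ and to $u_\eps-u$ together with translation invariance and continuity of translations, the fixed-$y$ step that the paper dismisses as ``clear'', which is a welcome clarification rather than a new approach.
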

\begin{proof}
Clearly, the sequence converges in $L^2((0,T)\times \Td)$, that is
$$
\int_0^T \int_{\Td} \left|\frac{u_{\eps}(t,x-\eps y) - u_{\eps}(t,x)}{\varepsilon} - \nabla u(t,x) \cdot y \right|^2 \diff x \diff t \to 0.
$$
To see the convergence in $L^2((0,T)\times \Td \times \Td)$, it is sufficient to apply dominated convergence theorem as we have the estimate
$$
\int_0^T \int_{\Td} \left|\frac{u_{\eps}(t,x-\eps y) - u_{\eps}(t,x)}{\varepsilon} \right|^2 \leq \int_0^T \int_{\Td} \left|\nabla u^{\eps} \cdot y \right|^2 \diff x \diff t \leq |y| \, \sup_{\varepsilon} \|\nabla \ueps\|_{L^2_{t,x}}^2.
$$
\end{proof}

\begin{lem}\label{lem:weak_conv_diff_quot} Let $\varphi \in L^{\infty}(\Td)$ and $\{\eta_{\eps}\}$ be a sequence such that
\begin{itemize}
    \item $\varphi(y)\,\f{\nabla\eta_{\eps}(x)-\nabla\eta_{\eps}(x-\eps y)}{\eps}$ is bounded in $L^2((0,T)\times\Td \times \Td)$,
    \item $\eta_{\eps}(t,x) \weak \eta(t,x)$ in $L^2(0,T; H^1(\Td))$ and $\eta \in L^2(0,T; H^2(\Td))$.
\end{itemize}
Then,
$$
\varphi(y)\,\f{\nabla\eta_{\eps}(x)-\nabla\eta_{\eps}(x-\eps y)}{\eps} \weak \varphi(y)\,D^2 \eta(t,x) \cdot y \mbox{ weakly in } L^2((0,T)\times\Td \times \Td).
$$
\end{lem}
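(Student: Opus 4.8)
The plan is to prove the lemma by the usual weak-limit identification. Set
\[
g_\eps(t,x,y):=\varphi(y)\,\frac{\nabla\eta_\eps(t,x)-\nabla\eta_\eps(t,x-\eps y)}{\eps}.
\]
By hypothesis $\{g_\eps\}$ is bounded in $L^2((0,T)\times\Td\times\Td)$, so along a subsequence $g_\eps\weak g$ in that space. Since $\eta\in L^2(0,T;H^2(\Td))$ and $\varphi\in L^\infty(\Td)$, the candidate $\varphi(y)\,D^2\eta(t,x)\cdot y$ also belongs to $L^2((0,T)\times\Td\times\Td)$, so it suffices to prove $g=\varphi(y)\,D^2\eta(t,x)\cdot y$; uniqueness of the weak limit then upgrades the convergence to the whole sequence.

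The key observation is that $y$ is merely a parameter for $\nabla_x$, so $g_\eps=\varphi(y)\,\nabla_x h_\eps$ with $h_\eps(t,x,y):=\eps^{-1}\big(\eta_\eps(t,x)-\eta_\eps(t,x-\eps y)\big)$. First I would check that $\{h_\eps\}$ is bounded in $L^2((0,T)\times\Td\times\Td)$: writing $h_\eps(t,x,y)=\int_0^1\nabla\eta_\eps(t,x-s\eps y)\cdot y\,\diff s$, Jensen's inequality and translation invariance of Lebesgue measure on $\Td$ give $\int_\Td|h_\eps(t,x,y)|^2\diff x\le|y|^2\,\|\nabla\eta_\eps(t,\cdot)\|_{L^2(\Td)}^2$, whose right-hand side is integrable over $(t,y)\in(0,T)\times\Td$ because a weakly convergent sequence is bounded, so $\{\eta_\eps\}$ is bounded in $L^2(0,T;H^1(\Td))$.

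Next I would show $h_\eps\weak\nabla\eta\cdot y$ weakly in $L^2((0,T)\times\Td\times\Td)$, the weak analogue of Lemma~\ref{lem:diff_quot_strong_conv}. Testing against $\chi(t,x,y)=\theta(y)\psi(t,x)$ with $\theta,\psi$ smooth and changing variables $x\mapsto x-\eps y$ in the term involving $\eta_\eps(\cdot-\eps y)$ transfers the difference quotient onto the smooth function:
\[
\int_0^T\!\!\int_\Td\!\!\int_\Td h_\eps\,\chi\,\diff x\diff y\diff t=\int_0^T\!\!\int_\Td\!\!\int_\Td \theta(y)\,\eta_\eps(t,x)\,\frac{\psi(t,x)-\psi(t,x+\eps y)}{\eps}\,\diff x\diff y\diff t.
\]
Since $\eps^{-1}\big(\psi(t,x)-\psi(t,x+\eps y)\big)\to-\nabla\psi(t,x)\cdot y$ uniformly and $\eta_\eps\weak\eta$ in $L^2((0,T)\times\Td)$, the right-hand side converges to $-\int\theta(y)\,\eta\,\nabla\psi\cdot y\,\diff x\diff y\diff t$, which equals $\int(\nabla\eta\cdot y)\,\chi\,\diff x\diff y\diff t$ after integrating by parts in $x$ (valid since $\eta\in L^2(0,T;H^1)$). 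As tensor products $\theta(y)\psi(t,x)$ are dense, combining this with the $L^2$-bound above yields the claimed weak convergence.

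Finally I would identify $g$: for any smooth vector field $\Psi(t,x,y)$, integration by parts in $x$ (using that $\varphi(y)$ is $x$-independent) gives
\[
\int_0^T\!\!\int_\Td\!\!\int_\Td g_\eps\cdot\Psi\,\diff x\diff y\diff t=-\int_0^T\!\!\int_\Td\!\!\int_\Td \varphi(y)\,h_\eps\,\DIV_x\Psi\,\diff x\diff y\diff t.
\]
Since $\varphi(y)\,\DIV_x\Psi\in L^2((0,T)\times\Td\times\Td)$ and $h_\eps\weak\nabla\eta\cdot y$, the right-hand side tends to $-\int\varphi(y)\,(\nabla\eta\cdot y)\,\DIV_x\Psi\,\diff x\diff y\diff t$, and a second integration by parts in $x$ — legitimate because $\nabla\eta\cdot y\in L^2(0,T;H^1)$ as $\eta\in L^2(0,T;H^2)$ — rewrites this as $\int\varphi(y)\,(D^2\eta(t,x)\cdot y)\cdot\Psi\,\diff x\diff y\diff t$. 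Hence $g=\varphi(y)\,D^2\eta(t,x)\cdot y$, concluding the proof. The only delicate point I anticipate is that we are given merely \emph{weak} convergence of $\{\eta_\eps\}$ in $L^2(0,T;H^1)$ rather than strong compactness; the device that neutralises this is the change of variables above, which moves all the $\eps$-dependence off $\eta_\eps$ onto the smooth test function, so one only ever pairs the weakly convergent $\{\eta_\eps\}$ with a \emph{uniformly} convergent smooth sequence, every remainder estimate being supplied by the $L^2(0,T;H^1)$-bound.
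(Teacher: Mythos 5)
Your proof is correct and rests on the same device as the paper's: a change of variables that shifts the difference quotient onto the smooth test function, the weak convergence of $\{\eta_\eps\}$, and a final integration by parts justified by $\eta \in L^2(0,T;H^2(\Td))$. The only difference is cosmetic: the paper applies this directly to the quotient of $\nabla\eta_\eps$ tested against $\psi(t,x,y)$, while you route through the function-level quotient $h_\eps$ and add one extra (harmless) integration by parts in $x$, together with an $L^2$ bound on $h_\eps$ that the paper does not need.
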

\begin{proof}
Clearly, after passing to a subsequence, the limit exists and we only need to identify it. For this, we consider a smooth and compactly supported test function $\psi(t,x,y)$ and compute
\begin{multline*}
    \int_0^T \int_{\Td} \int_{\Td} 
    \varphi(y)\,\f{\nabla\eta_{\eps}(t,x)-\nabla\eta_{\eps}(t,x-\eps y)}{\eps} \psi(t,x,y)
    \diff y \diff x \diff t= \\ = 
    \int_0^T \int_{\Td} \int_{\Td} 
    \varphi(y)\,\f{\psi(t,x,y)-\psi(t,x+\eps y,y)}{\eps} \nabla \eta_{\eps}(t,x)
    \diff y \diff x \diff t
\end{multline*}
which converges to
$$
-\int_0^T \int_{\Td} \int_{\Td}  \varphi(y) \nabla \psi(t,x,y) \cdot y \, \nabla \eta(t,x) \diff y \diff x \diff t = \int_0^T \int_{\Td} \int_{\Td}\varphi(y) \, \psi(t,x,y) D^2\eta(t,x) \cdot y \diff y \diff x \diff t
$$
because $\eta \in L^2(0,T; H^2(\Td))$.

\end{proof}

\section{Compactness results}
\subsection{A version of Lions-Aubin for JKO scheme.} We recall here from \cite[Theorem 2.1]{MR3761096} a version of Lions-Aubin lemma useful for establishing compactness of a sequence of solutions to JKO scheme. For the proof we refer to \cite[Theorem 2]{MR2005609}.
\begin{thm}\label{thm:general_lions_aubin}
Let $(X, \|\cdot\|_{X})$ be a Banach space. We consider 
\begin{itemize}
    \item a lower semi-continuous functional $\mathcal{F}:X \to [0,+\infty]$ with relatively compact sublevels in $X$,
    \item a pseudo-distance $g:X \times X \to [0, +\infty]$, that is $g$ is lower semicontinuous and $g(\rho,\eta)=0$ for some $\rho, \eta \in X$ such that $\mathcal{F}(\rho), \mathcal{F}(\eta)<\infty$ implies $\rho = \eta$.
\end{itemize}
Let $U$ be a set of measurable functions $u: (0,T) \times X$ with $T>0$ fixed. Assume further that
\begin{equation}\label{eq:gen_lions_aubin_ass}
\sup_{u \in U} \int_0^T \mathcal{F}(u(t))\diff t < \infty, \qquad \lim_{h\to 0} \sup_{u \in U} g(u(t+h), u(t)) \diff t  = 0.
\end{equation}
Then, $U$ contains a sequence $\{u_n\}$ converging in measure to some $u \in X$, i.e.
$$
\forall_{\varepsilon>0} \left| \{t \in [0,T]:  \|u_n - u \|_{X} > \varepsilon \} \right| \to 0 \mbox{ as } n\to \infty.
$$
In particular, there exists a subsequence (not relabelled) such that
$$
u_n(t) \to u(t) \mbox{ in } X \mbox{ for a.e. } t\in [0,T].
$$
\end{thm}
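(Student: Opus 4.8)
The statement is an abstract Aubin--Lions--Simon compactness theorem, and the plan is the classical one going back to Rossi--Savaré: establish relative compactness in $X$ at almost every frozen time, establish a modulus of continuity in the time variable measured in the topology of $X$, and conclude by a Kolmogorov--Riesz--Fréchet type extraction. Throughout I abbreviate $C:=\sup_{u\in U}\int_0^T\mathcal F(u(t))\diff t<\infty$, I read the second hypothesis in \eqref{eq:gen_lions_aubin_ass} as $\varpi(h):=\sup_{u\in U}\int_0^{T-h}g(u(t+h),u(t))\diff t\to 0$ as $h\downarrow 0$, and I fix an arbitrary countable family $\{u_n\}\subset U$ from which the subsequence will be drawn.

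\emph{Step 1 (tightness in $X$) and Step 2 (equicontinuity in measure).} First I would invoke Chebyshev's inequality: from $\int_0^T\mathcal F(u_n(t))\diff t\le C$ we get $|\{t:\mathcal F(u_n(t))>R\}|\le C/R$ for all $R>0$ and all $n$. Since $\mathcal F$ is lower semicontinuous, the sublevel set $K_R:=\{\mathcal F\le R\}$ is closed, hence compact in $X$ (being closed and relatively compact), and every point of $K_R$ has finite $\mathcal F$. Thus for $\varepsilon>0$, taking $R_\varepsilon:=C/\varepsilon$ produces a \emph{fixed} compact $K_\varepsilon\subset X$ with $\sup_n|\{t:u_n(t)\notin K_\varepsilon\}|\le\varepsilon$. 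For Step 2, fix $\varepsilon,\sigma>0$: on the compact set $\{(v,w)\in K_\varepsilon\times K_\varepsilon:\|v-w\|_X\ge\sigma\}$ the pseudo-distance $g$ is lower semicontinuous and strictly positive (it vanishes only on the diagonal among points of finite $\mathcal F$), hence attains a minimum $m(\varepsilon,\sigma)>0$. Writing $A_n^h:=\{t\in[0,T-h]:\|u_n(t+h)-u_n(t)\|_X\ge\sigma\}$ and splitting according to whether $u_n(t),u_n(t+h)$ both lie in $K_\varepsilon$, one gets
\[
m(\varepsilon,\sigma)\,\bigl|A_n^h\cap\{u_n(t),u_n(t+h)\in K_\varepsilon\}\bigr|\le\int_0^{T-h}g(u_n(t+h),u_n(t))\diff t\le\varpi(h),
\]
while the complement has measure $\le2\varepsilon$ by Step 1; hence $\sup_n|A_n^h|\le\varpi(h)/m(\varepsilon,\sigma)+2\varepsilon$, and sending $h\downarrow0$ then $\varepsilon\downarrow0$ gives $\sup_n|\{t:\|u_n(t+h)-u_n(t)\|_X\ge\sigma\}|\to0$ as $h\downarrow0$, for every $\sigma>0$.

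\emph{Step 3 (extraction).} Now $\{u_n\}$ has the two features needed for relative compactness in the topology of convergence in measure: a common compact range up to arbitrarily small time-measure (Step 1), and a uniform modulus of continuity of time-translates in $\|\cdot\|_X$ (Step 2). I would metrize convergence in measure by $\varrho(f,g):=\int_0^T\min(1,\|f(t)-g(t)\|_X)\diff t$ and show $\{u_n\}$ is totally bounded for $\varrho$: given $\varepsilon$, pick $K_\varepsilon$ as above, cover it by finitely many balls of radius $\varepsilon$, choose $h$ with $\varpi(h)/m(\varepsilon,\varepsilon)$ small, partition $[0,T]$ into intervals of length $h$, and encode each $u_n$ by the finitely many balls visited on each interval; Steps 1--2 show this encoding determines $u_n$ up to $\varrho$-error $O(\varepsilon)$. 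Total boundedness yields a subsequence converging in measure to a measurable limit $u:(0,T)\to X$, and a further diagonal subsequence converges for a.e.\ $t$.

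The main obstacle is Step 3 (equivalently, the abstract compactness lemma behind this theorem): the delicate point is that the range of $u_n$ is only \emph{approximately} contained in a compact set, so the Banach-valued Kolmogorov--Riesz criterion has to be run with this ``tightness-up-to-$\varepsilon$'' built in, and the reduction from ``uniform modulus of translates'' to ``total boundedness for $\varrho$'' must be carried out in the right order ($\varepsilon$, then the compact, then the ball-cover, then the time-scale $h$). By contrast Steps 1--2 are essentially routine, the only structural input being the use of lower semicontinuity of $g$ together with its separation property on sets of finite $\mathcal F$ to extract the constant $m(\varepsilon,\sigma)$.
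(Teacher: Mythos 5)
The paper never proves this statement: it is recalled from \cite{MR3761096} and the proof is delegated to Rossi--Savar\'e \cite{MR2005609}, so the only meaningful comparison is with that cited argument. Your Steps 1--2 are correct and reproduce its first half faithfully: Chebyshev plus lower semicontinuity of $\mathcal F$ gives a \emph{fixed} compact $K_\varepsilon=\{\mathcal F\le C/\varepsilon\}$ (closed sublevel of an lsc functional, relatively compact by hypothesis) containing $u_n(t)$ outside a time set of measure at most $\varepsilon$, and lower semicontinuity of $g$ together with its separation property on points of finite $\mathcal F$ yields the positive minimum $m(\varepsilon,\sigma)$ on the compact set $\{(v,w)\in K_\varepsilon\times K_\varepsilon:\|v-w\|_X\ge\sigma\}$, whence the uniform control in measure of the time translates. (Your reading of the second condition in \eqref{eq:gen_lions_aubin_ass} as an integral condition is indeed the intended one; the displayed formula in the statement is missing the integral sign.)

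The genuine gap is Step 3, which is exactly the part the paper outsources to the abstract compactness criterion of \cite{MR2005609}. As written, the encoding ``record which balls of a finite cover of $K_\varepsilon$ are visited on each interval of length $h$'' does not determine $u_n$ up to $O(\varepsilon)$ in your metric $\varrho$: two functions may visit the same balls on each interval in a completely different time order, and a single $u_n$ may oscillate between two far-apart balls within one interval, so the code controls neither $\varrho(u_n,u_m)$ nor the distance of $u_n$ to a piecewise-constant representative. Moreover, the Step 2 estimate for the \emph{single} shift $h$ equal to the partition width does not exclude such oscillation. The standard repair --- and the actual content of the cited lemma --- is to average the translation estimate over all shifts $r\in(0,h]$ via Fubini, obtaining that $\int_{I_k}\int_{I_k}\mathbf{1}\{\|u_n(s)-u_n(t)\|_X\ge\sigma\}\,ds\,dt$ is small for most partition intervals $I_k$, and then to select by pigeonhole a good time $t_k\in I_k$ so that $u_n$ stays $\sigma$-close to the value $u_n(t_k)\in K_\varepsilon$ on most of $I_k$; only after this selection does the one-ball-per-interval code give total boundedness for $\varrho$, hence a subsequence converging in measure, and a.e.\ convergence along a further subsequence is then routine. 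Until that averaging/selection step is carried out (or you simply invoke \cite[Theorem 2]{MR2005609}, as the paper does), the extraction step is asserted rather than proved.
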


\subsection{Nonlocal version of Lions-Aubin lemma}

The following result was proved in \cite[Theorem B.1]{elbar-skrzeczkowski} based on \cite{bourgain2001another} and  \cite{MR2041005}. In fact, the proof of Theorem \ref{thm:ponce_tx} is the proof of Proposition \ref{prop:ponce} integrated in time.
\begin{thm}\label{thm:ponce_tx}
Let $d \geq 2$. Let $\{f_\eps\}$ be a sequence bounded in $L^p((0,T)\times \Td)$. Suppose that there exists a sequence $\{\rho_{\eps}\}$ as above such that
\begin{equation}\label{eq:Ponce_org_condition}
\int_0^T \int_{\Td} \int_{\Td} \frac{|f_\eps(t,x) - f_\eps(t,y)|^p}{\varepsilon^p} \omega_\eps(|x-y|) \diff x \diff y \diff t \leq C
\end{equation}
for some constant $C$. Then, $\{f_\eps\}$ is compact in space in $L^p((0,T)\times \Td)$, i.e.
\begin{equation}\label{eq:equicontinuity_Lp_space}
\lim_{\delta \to 0} \limsup_{\varepsilon \to 0} \int_0^T \int_{\Td} |f_\eps \ast \varphi_{\delta}(t,x) - f_\eps(t,x)|^p \diff x \diff t = 0
\end{equation}
for all families of mollifiers $\{\varphi_{\delta}\}_{0<\delta<1}$.
\end{thm}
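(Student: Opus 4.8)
The plan is to observe that the statement is exactly the time-integrated version of Proposition~\ref{prop:ponce}, so I would run the proof of the latter --- the Bourgain--Brezis--Mironescu argument of \cite{bourgain2001another,MR2041005} --- slicewise in $t$ and then integrate over $(0,T)$. The one substantive input is the \emph{BBM estimate}: there is a constant $C = C(d,p)$ such that for every $g \in L^p(\Td)$, every sufficiently small $\eps>0$, and every $h \in \Rd$,
\begin{equation*}
\| g(\cdot-h)-g \|_{L^p(\Td)}^p \;\le\; C\,(|h|^p+\eps^p)\,\frac{1}{\eps^p}\int_{\Td}\int_{\Td}|g(x)-g(y)|^p\,\omega_\eps(|x-y|)\,\diff x\,\diff y .
\end{equation*}
I would recall its proof: for $|h|\le \eps/4$ one averages the increment $g(\cdot-h)-g$ against the normalized indicator of a ball of radius $\sim\eps$ and uses that $\omega_\eps\gtrsim \eps^{-d}$ on that ball to bound the $L^p$-norm of the increment by $C\eps^p$ times the nonlocal seminorm; for general $h$ one chains $\lceil 4|h|/\eps\rceil$ increments of size $\le\eps/4$ and applies the previous case, the chaining factor $(|h|/\eps)^p$ cancelling the $\eps^p$ and producing the $|h|^p$ term.

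Granting this, the argument is short. Set $S_\eps(t):=\eps^{-p}\int_{\Td}\int_{\Td}|f_\eps(t,x)-f_\eps(t,y)|^p\,\omega_\eps(|x-y|)\,\diff x\,\diff y$, so that \eqref{eq:Ponce_org_condition} reads $\int_0^T S_\eps(t)\,\diff t\le C$, and by Fubini $f_\eps(t,\cdot)\in L^p(\Td)$ with $S_\eps(t)<\infty$ for a.e.\ $t$. For such $t$, Jensen's inequality applied to the probability measure $\varphi_\delta(h)\,\diff h$ and then the BBM estimate give
\begin{align*}
\| f_\eps(t,\cdot)\ast\varphi_\delta - f_\eps(t,\cdot) \|_{L^p(\Td)}^p
&\le \int \varphi_\delta(h)\,\| f_\eps(t,\cdot-h)-f_\eps(t,\cdot) \|_{L^p(\Td)}^p\,\diff h \\
&\le C\Big(\int\varphi_\delta(h)\,|h|^p\,\diff h + \eps^p\Big)\, S_\eps(t).
\end{align*}
Since $\int\varphi_\delta(h)\,|h|^p\,\diff h\le C\delta^p$ for any family of mollifiers, integrating over $t\in(0,T)$ and using $\int_0^T S_\eps(t)\,\diff t\le C$ yields
\begin{equation*}
\int_0^T\int_{\Td}|f_\eps\ast\varphi_\delta - f_\eps|^p\,\diff x\,\diff t \;\le\; C\,(\delta^p+\eps^p).
\end{equation*}
Taking first $\limsup_{\eps\to0}$ and then $\lim_{\delta\to0}$ gives \eqref{eq:equicontinuity_Lp_space}.

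The genuinely non-trivial part is the BBM estimate itself (equivalently, the proof of Proposition~\ref{prop:ponce}): it is what makes the nonlocal seminorm, rather than mere $L^p$-boundedness, force equicontinuity in space. Once it is available, the passage to the time-dependent statement is pure bookkeeping: every estimate involves only the $L^p(\Td)$-norm and the spatial seminorm of a single time slice, so Fubini applies, and \eqref{eq:Ponce_org_condition} is exactly the time integral of \eqref{eq:compactness_theorem_ponce}. This is the content of \cite[Theorem~B.1]{elbar-skrzeczkowski}, which we invoke. The standing hypothesis $d\ge 2$ is kept as in that reference and plays no additional role in the reduction above.
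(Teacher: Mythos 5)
Your proposal is correct and follows essentially the same route as the paper, which offers no independent argument beyond citing \cite[Theorem B.1]{elbar-skrzeczkowski} and observing that the statement is just Proposition \ref{prop:ponce} integrated in time --- precisely your slicewise translation estimate combined with Jensen's inequality and Fubini. The only (minor) caveat is in your sketch of the BBM estimate: the pointwise bound $\omega_\eps\gtrsim\eps^{-d}$ on a ball need not hold for every admissible radial $\omega$, so one should instead average over an annulus where the kernel carries a fixed amount of mass, or equivalently apply the standard estimate of \cite{bourgain2001another,MR2041005} to the renormalized kernel $\tilde\omega_\eps(z)\propto |z|^p\eps^{-p}\omega_\eps(z)$, which turns your $\eps^{-p}$-seminorm into the classical one.
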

We prove here:
\begin{lem}\label{lem:Lions-Aubin_nonlocal}
Suppose that $\{f_{\varepsilon}\}$ is a sequence bounded in $L^2((0,T)\times \Td)$ such that
\begin{itemize}
    \item $\{\p_{t}f_{\eps}\}$ is uniformly bounded in $L^2(0,T; H^{-k}(\Td))$ for some $k \in \N$,
    \item  $\{f_\eps\}$ is compact in space in $L^2((0,T)\times \Td)$, i.e.
\begin{equation}\label{item2compactness_2}
\lim_{\delta \to 0} \limsup_{\varepsilon \to 0} \int_0^T \int_{\Td} |f_\eps \ast \varphi_{\delta}(t,x) - f_\eps(t,x)|^2 \diff x \diff t = 0
\end{equation}
for some family of mollifiers $\{\varphi_{\delta}\}_{0<\delta<1}$.
\end{itemize}
Then, $\{f_\eps\}$ is compact in time in $L^2((0,T)\times \Td)$, i.e.
\begin{equation}\label{item2compactness}
\lim_{h \to 0} \limsup_{\varepsilon \to 0} \int_0^{T-h} \int_{\Td} |f_\eps(t+h,x) - f_\eps(t,x)|^2 \diff x \diff t \to 0 \mbox{ as } h \to 0
\end{equation}
and so, it is compact in $L^2((0,T)\times \Td)$.
\end{lem}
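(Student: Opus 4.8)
The plan is to deduce compactness in time \eqref{item2compactness} from the compactness in space \eqref{item2compactness_2}, using the bound on $\partial_t f_\eps$ only after a spatial regularization, at which point the negative Sobolev norm can be traded for an $L^2$ norm. First I would fix $\delta\in(0,1)$, abbreviate $g_\eps^\delta:=f_\eps\ast\varphi_\delta$ (convolution in the space variable only), and split for $0<h<T$
\begin{equation*}
f_\eps(t+h,\cdot)-f_\eps(t,\cdot)=\big(f_\eps-g_\eps^\delta\big)(t+h,\cdot)+\big(g_\eps^\delta(t+h,\cdot)-g_\eps^\delta(t,\cdot)\big)+\big(g_\eps^\delta-f_\eps\big)(t,\cdot).
\end{equation*}
Using $(a+b+c)^2\le 3(a^2+b^2+c^2)$ and integrating over $(0,T-h)\times\Td$, the first and third terms are controlled, uniformly in $h$, by $3\int_0^T\int_{\Td}|f_\eps-g_\eps^\delta|^2$, whose $\limsup$ over $\eps$ tends to $0$ as $\delta\to0$ by \eqref{item2compactness_2}.

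For the middle term I would use that spatial convolution commutes with $\partial_t$, so that $g_\eps^\delta(t+h,\cdot)-g_\eps^\delta(t,\cdot)=\int_t^{t+h}(\partial_s f_\eps)\ast\varphi_\delta(s,\cdot)\diff s$, and that convolution with the smooth kernel $\varphi_\delta$ maps $H^{-k}(\Td)$ continuously into $L^2(\Td)$ with a constant $C(\delta)$ which may blow up as $\delta\to0$. Cauchy--Schwarz in $s$ together with Fubini then give
\begin{equation*}
\int_0^{T-h}\int_{\Td}\big|g_\eps^\delta(t+h)-g_\eps^\delta(t)\big|^2\diff x\diff t\le h^2\,C(\delta)^2\,M,\qquad M:=\sup_\eps\|\partial_t f_\eps\|_{L^2(0,T;H^{-k}(\Td))}^2,
\end{equation*}
so that, for fixed $\delta$, this term vanishes as $h\to0$. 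Combining the two estimates yields
\begin{equation*}
\limsup_{h\to0}\ \limsup_{\eps\to0}\ \int_0^{T-h}\int_{\Td}|f_\eps(t+h)-f_\eps(t)|^2\diff x\diff t\le 6\,\limsup_{\eps\to0}\int_0^T\int_{\Td}|f_\eps-g_\eps^\delta|^2\diff x\diff t,
\end{equation*}
and letting $\delta\to0$ and invoking \eqref{item2compactness_2} proves \eqref{item2compactness}.

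To conclude that $\{f_\eps\}$ is relatively compact in $L^2((0,T)\times\Td)$ I would combine the compactness in space \eqref{item2compactness_2}, the compactness in time \eqref{item2compactness} just obtained, and the uniform $L^2$ bound via the Riesz--Fréchet--Kolmogorov criterion on the bounded cylinder $(0,T)\times\Td$. Equivalently, and perhaps more transparently, for each fixed $\delta$ the smoothed family $\{g_\eps^\delta\}_\eps$ is bounded in $L^2(0,T;H^1(\Td))$ with $\partial_t g_\eps^\delta$ bounded in $L^2(0,T;L^2(\Td))$, hence relatively compact in $L^2((0,T)\times\Td)$ by the classical Aubin--Lions lemma; since by \eqref{item2compactness_2} the functions $f_\eps$ lie, for $\eps$ small, in a small $L^2$-neighbourhood of this compact set, $\{f_\eps\}$ is totally bounded.

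The step I expect to require the most care is the bookkeeping of the order of limits: since $C(\delta)\to\infty$ as $\delta\to0$, one must send $h\to0$ with $\delta$ frozen and only afterwards let $\delta\to0$. Everything else is a routine convolution/interpolation estimate; the one auxiliary fact used, namely that convolution with $\varphi_\delta$ is bounded from $H^{-k}(\Td)$ into $L^2(\Td)$, follows on the torus from the smoothness of $\varphi_\delta$ through Fourier series.
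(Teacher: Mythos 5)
Your proof is correct and follows essentially the same strategy as the paper: the same three-term splitting via spatial mollification and the same duality estimate trading $\|\partial_t f_\eps\|_{H^{-k}}$ against a norm of $\varphi_\delta$ to control the middle term. The only differences are organizational: you send $h\to 0$ with $\delta$ fixed and only afterwards let $\delta\to 0$, which avoids the paper's explicit coupling $h(\delta)=\delta^{2k+d}$ and the rate $\|\varphi_\delta\|_{H^k}\le C\delta^{-k-d/2}$, and your alternative Aubin--Lions/total-boundedness conclusion replaces the Fréchet--Kolmogorov step but is equally valid.
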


\begin{proof}
Using the mollifiers with $\delta = \delta(h)$ depending on $h$ to be specified later in the way that $\delta(h) \to 0$ as $h\to 0$, we first split
\begin{align*}
\int_{0}^{T-h}\int_{\Td}|f_{\eps}(t+h,x)-f_{\eps}(t,x)&|^2\diff x \diff t\le 4\int_{0}^{T-h}\int_{\Td}|f_{\eps}(t,x)-f_{\eps}(t,\cdot)\ast\varphi_{\delta}(x)|^{2}\diff x \diff t\\
&+4\int_{0}^{T-h}\int_{\Td}|f_{\eps}(t+h,x)-f_{\eps}(t+h,\cdot)\ast\varphi_{\delta}(x)|^{2}\diff x \diff t\\
&+4\int_{0}^{T-h}\int_{\Td}|f_{\eps}(t+h,\cdot)\ast\varphi_{\delta}(x)-f_{\eps}(t,\cdot)\ast\varphi_{\delta}(x)|^{2}\diff x \diff t.
\end{align*}
When we apply limit $\lim_{h\to 0} \limsup_{\varepsilon\to0}$, the first and second term vanish due to \eqref{item2compactness_2}.
It remains to study the third term. For this, suppose first that $f_{\varepsilon}(t,\cdot)$ is smooth in the time variable. Then,
\begin{align*}
\int_{0}^{T-h}&\int_{\Td}|f_{\eps}(t+h,\cdot)\ast\varphi_{\delta}(x)-f_{\eps}(t,\cdot)\ast\varphi_{\delta}(x)|^{2}\diff x \diff t=\int_{0}^{T-h}\int_{\Td}\left|\int_{t}^{t+h}\p_{t}f_{\eps}(s,\cdot)\ast\varphi_{\delta}(x) \diff s\right|^{2}\diff x \diff t.
\end{align*}
Now, we can estimate the convolution as follows
$$
\p_{t}f_{\eps}(s,\cdot) \ast \varphi_{\delta}(x) = \int_{\Td} \p_{t}f_{\eps}(s,y) \, \varphi_{\delta}(x-y) \diff y \leq \|\partial_t f_{\varepsilon}(s,\cdot)\|_{H^{-k}(\Td)} \, \| \varphi_{\delta}(x-\cdot)\|_{H^k(\Td)}.
$$
Using this and applying invariance in space of the $H^k(\Td)$ norm we obtain
$$
\int_{0}^{T-h}\int_{\Td}\left|\int_{t}^{t+h}\p_{t}f_{\eps}(s,\cdot)\ast\varphi_{\delta}(x) \diff s\right|^{2}\diff x \diff t = h^2 \, \| \varphi_{\delta}\|_{H^k(\Td)} \int_{0}^{T-h}\left|\frac{1}{h}\int_{t}^{t+h}\|\partial_t f_{\varepsilon}(s,\cdot)\|_{H^{-k}(\Td)} \diff s\right|^{2} \diff t.
$$
Applying Jensen's inequality we obtain
$$
h^2 \, \| \varphi_{\delta}\|_{H^k(\Td)} \int_{0}^{T-h}\left|\frac{1}{h}\int_{t}^{t+h}\|\partial_t f_{\varepsilon}(s,\cdot)\|_{H^{-k}(\Td)} \diff s\right|^{2} \diff t 
\leq
Th \, \| \varphi_{\delta}\|_{H^k(\Td)} \|\partial_t f_{\varepsilon}(s,\cdot)\|_{L^2(0,T;H^{-k}(\Td))}^2. 
$$
Using that $ \|\varphi_{\delta}\|_{H^{k}(\Td)}\le\frac{C}{\delta^{k+d/2}}$ we finally obtain
$$
 \int_{0}^{T-h}\int_{\Td}|f_{\eps}(t+h,\cdot)\ast\varphi_{\delta}(x)-f_{\eps}(t,\cdot)\ast\varphi_{\delta}(x)|^{2}\diff x \diff t
 \leq CT \frac{h}{{\delta^{k+d/2}}} \, \|\partial_t f_{\varepsilon}(s,\cdot)\|_{L^2(0,T;H^{-k}(\Td))}^2.
$$
Now, if $f_{\varepsilon}(t,x)$ is not smooth in time, we extend it with $f_{\varepsilon}(0,x)$ for $t<0$, $f_{\varepsilon}(T,x)$ for $t>T$ and apply usual regularization to obtain the same estimate. Hence, if we choose $h(\delta) = \delta^{2k+d}$ we conclude
\begin{equation*}
   \lim_{h\to 0} \limsup_{\varepsilon\to0}  \int_{0}^{T-h}\int_{\Td}|f_{\eps}(t+h,x)-f_{\eps}(t,x)|^2\diff x \diff t\le \theta(h).
\end{equation*}

Combined with the compactness in space~\eqref{item2compactness} and the Fréchet-Kolmogorov theorem we obtain the compactness of $\{f_{\eps}\}$ in $L^{2}((0,T)\times \Td)$. 
\end{proof}

\subsection*{Acknowledgements}
JAC was supported by the Advanced Grant Nonlocal-CPD (Nonlocal PDEs for Complex Particle Dynamics: Phase Transitions, Patterns and Synchronization) of the European Research Council Executive Agency (ERC) under the European Union’s Horizon 2020 research and innovation programme (grant agreement No. 883363). JAC was also partially supported by the EPSRC grant numbers EP/T022132/1 and EP/V051121/1. JS was supported by National Science Center, Poland through project no. 2019/35/N/ST1/03459.

\bibliographystyle{abbrv}
\bibliography{fastlimit}
\end{document}